\theoremstyle{plain}
\newtheorem{theorem}{Theorem}[section]
\newtheorem{proposition}[theorem]{Proposition}
\newtheorem{lemma}[theorem]{Lemma}
\theoremstyle{definition}
\newtheorem{definition}[theorem]{Definition}
\newtheorem{corollary}[theorem]{Corollary}
\newtheorem{example}[theorem]{Example}
\newtheorem{remark}[theorem]{Remark}
\numberwithin{equation}{theorem}
\newcommand{\C}{{\mathbb C}}
\newcommand{\R}{{\mathbb R}}
\newcommand{\Z}{{\mathbb Z}}
\newcommand{\Q}{{\mathbb Q}}
\renewcommand{\ll}{{\langle}}
\newcommand{\rr}{{\rangle}}
\newcommand{\reg}{{\mathrm{reg}}}
\newcommand{\CR}{{\mathcal{B}}}
\newcommand{\CH}{{\mathcal{H}}}
\newcommand{\CB}{{\mathcal{B}}}
\newcommand{\CG}{{\mathcal{G}}}
\newcommand{\CT}{{\mathcal T}}
\newcommand{\SU}{\operatorname{SU}}
\newcommand{\Res}{\operatorname{Res}}
\renewcommand{\ll}{{\langle}}
\newcommand{\la}{{\langle}}
\newcommand{\ra}{{\rangle}}
\newcommand{\vol}{{\mathrm{vol}}}
\newcommand{\Trans}{{T}}
\renewcommand{\CR}{{\mathcal{R}}}
\newcommand{\CS}{{\mathcal{S}}}
\newcommand{\CM}{{\mathcal{M}}}
\newcommand{\AV}{{A}}
\newcommand{\SZ}{{Z}}
\newcommand{\frh}{{\mathfrak h}}
\newcommand{\frg}{{\mathfrak g}}
\newcommand{\frt}{{\mathfrak t}}
\newcommand{\frA}{{\mathfrak A}}
\def\barra{\vrule height10pt depth4pt width0pt}
\begin{document}

\title[Bernoulli series and volumes of moduli spaces]{Multiple Bernoulli series  \\and volumes of moduli spaces \\
of flat bundles over surfaces.}

\author{Velleda Baldoni, Arzu Boysal and Mich\`ele Vergne}
\date{}

\begin{abstract}
Using Szenes formula for multiple Bernoulli series, we explain how
to compute  Witten series associated to classical Lie algebras.
Particular instances of these series compute  volumes of moduli
spaces of flat bundles over surfaces, and also certain multiple zeta
values.
\end{abstract}
\maketitle

{\small \tableofcontents}

\newpage

\section*{Introduction}
Let $V$ be a finite dimensional real vector space, and $\Lambda$ a
lattice in $V$.  We denote the dual of $\Lambda$ by $\Gamma$.

We consider a finite sequence of vectors $\Phi$ lying in $\Lambda$,
and let $\Gamma_{\reg}(\Phi)=\{\gamma\in \Gamma |\; \ll \phi,\gamma
\rr \neq 0,\;\mbox{for all} \;\phi\in \Phi\}$ be the set of regular
elements in $\Gamma$ relative to $\Phi$.

In this paper we compute

\begin{equation}\label{eq:B}
\mathcal{B}(\Phi, \Lambda)(v)=\sum_{\gamma \in \Gamma_{\reg}(\Phi)}
\frac{e^{\ll 2i\pi v ,\gamma \rr}}{ \prod_{\phi \in \Phi} \ll 2i\pi
\phi,\gamma \rr}, \end{equation} a function  on the torus
$V/\Lambda$. This sum, if not absolutely convergent, has a meaning
as a generalized function. If $\Phi$ generates $V$, then
$\mathcal{B}(\Phi,\Lambda)$ is \textit{piecewise polynomial} (see \cite{sze1}).

\medskip

For example, for $V=\R e_1\oplus \R e_2$ with standard lattice $\Lambda=\Z e_1+\Z e_2$,
if we choose  $\Phi=[e_1,e_1,e_2, e_1+e_2,e_1-e_2]$, then
$$\mathcal B(\Phi,\Lambda)(v_1 e_1+v_2 e_2)=\sum'_{n_1,n_2} \frac{e^{2i\pi (v_1 n_1+v_2 n_2)}}{(2i\pi n_1)^2(2i\pi n_2) (2i\pi (n_1+n_2)) (2i\pi (n_1-n_2))},$$
where the summation $\sum'$ means that we sum only over the integers
$n_1$ and $n_2$ such that $n_1n_2(n_1+n_2)(n_1-n_2)\neq 0$. The
expression for $\mathcal B(\Phi,\Lambda)(v_1e_1+v_2 e_2)$ as a
piecewise polynomial function  of $v_1$ and $v_2$ (of degree $5$) is
given in Section \ref{section:clroot}, Equation (\ref{eq:BB2}).

\bigskip

We call $\mathcal{B}(\Phi, \Lambda)$ the {\it multiple Bernoulli series}
associated to $\Phi$ and $\Lambda$.  Multiple Bernoulli series have
been  extensively studied by A. Szenes (\cite{sze1},\cite{sze2}). They are natural  generalizations of Bernoulli series:
for $V=\R \omega$, $\Lambda=\Z \omega$ and $\Phi_k=[\omega,\omega,\ldots, \omega]$,
where  $\omega$ is repeated $k$ times with $k>0$, the function $$\mathcal{B}(\Phi_k,\Lambda)(t\omega)=\sum_{n\neq 0,\;n\in \Z} \frac{e^{2i\pi nt}}{
(2i\pi n)^k}$$ is equal to $-\frac{1}{k!} B(k,\{t\})$ where
$B(k,t)$ denotes the $k^{\text{th}}$ Bernoulli polynomial in
variable $t$, and $\{t\}=t-[t]$ is the fractional part of $t$.
If $k=2g$ and $t=0$,  due to the symmetry $n\to -n$,
$$\mathcal{B}(\Phi_{2g},\Lambda)(0)=2\frac{1}{(2i\pi)^{2g}}\zeta(2g).$$
From the residue theorem in one variable, for $k>0$,   $$\sum_{n\neq 0,\;n\in \Z} \frac{e^{2i\pi nt}}{
(2i\pi n)^k}={\rm Res}_{z=0}(
\frac{1}{z^k} e^{zt}\frac{1}{1-e^z}).$$

Szenes multidimensional residue formula (see Theorem
\ref{theo:main}) is the generalization  of this formula to higher
dimension, and it is the tool that we use for computing
$\mathcal{B}(\Phi, \Lambda)(v)$ as a piecewise polynomial function.

\medskip

A particular but crucial instance of multiple Bernoulli series is
when $\Lambda$ is the coroot lattice of a compact connected simple Lie group $G$, and  $\Phi$
is comprised of positive coroots of $G$.   The series
$\mathcal{B}(\Phi_{2g-2},\Lambda)$, where the argument $\Phi_{2g-2}$  refers to taking elements of $\Phi$ with
multiplicity $2g-2$, appeared in the work of E.~Witten (\cite{wi}, \S 3), where Witten shows that its value at $v=0$
is (up to a scalar depending on $G$ and $g$) the symplectic volume of the moduli space of flat
$G$-connections on a Riemann surface of genus $g$.
Similarly, for a regular element $v$ of the Cartan Lie algebra of $G$,
Witten shows that the value of $\mathcal{B}(\Phi_{2g-1},\Lambda)(v)$ is (up to a scalar depending on $G$ and $g$) the
symplectic volume of the moduli space of flat $G$-connections on a
Riemann surface of genus $g$ with one boundary component, around which the holonomy is determined by $v$.

More generally, for the above choice of $\Lambda$ and $\Phi$, when
${\bf v}=\{v_1,\ldots,v_s\}$ is a collection of $s$ regular elements
of the Cartan Lie algebra, certain linear combinations of
$\mathcal{B}(\Phi_{2g-2+s},\Lambda)$ at some particular values
(depending on ${\bf v}$) is the symplectic volume of the moduli
space of flat $G$-connections on a Riemann surface of genus $g$ with
$s$ boundary components, around which the holonomy is determined by
$\bf v$. Then, its dependance on $\bf v$ is piecewise polynomial.

\bigskip

Multiple Bernoulli series have also been studied by P.E.~Gunnells and R.~Sczech (\cite{gs}) in view of applications to zeta
functions of real number fields.  Explicit computations of volumes of moduli spaces of flat bundles on Riemann surfaces
are also obtained in \cite{gs}.  The techniques they used is a generalization of the continued fraction algorithm and is
different from ours.

\bigskip

Y. Komori, K. Matsumoto and H. Tsumura
(\cite{kmt0},\cite{kmt1},\cite{kmt2}) studied the restriction of the
series (\ref{eq:B}), by summing it over the cone of dominant regular
weights of a semi-simple Lie group $G$, and defined  functions
$\zeta({\bf s}, v, G)$ (cf.~ Section \ref{comp}). They also obtained
relations between these functions over $\Q$. When $\Lambda$ is the
coroot lattice of a compact connected simple Lie group $G$ and  the
sequence $\Phi$ is the set of its positive coroots with equal even
multiplicity for long roots and (possibly different) equal even
multiplicity for short roots, due to the Weyl group symmetry, the
summation $\mathcal{B}(\Phi,\Lambda)(0)$ over the full (regular)
weight lattice is just (up to multiplication by an appropriate power
of $(2\pi)$) Komori-Matsumoto-Tsumura  zeta function $\zeta({\bf
s},0,G)$. Thus, the value of $\zeta({\bf s},0,G)$  (up to a certain
power of  $(2\pi)$) is a rational number which can be computed
explicitly, and we give examples of such computations.

As it is observed in \cite{kmt0},
some instances of the series $\zeta({\bf s}, v, G)$ also compute certain multiple zeta values.  In the last part of the article we give various
such computations of multiple zeta values using $\mathcal{B}(\Phi,\Lambda)$.

\bigskip

Here is the outline of individual sections.

\medskip

In Section 1, we recall a formula due to A. Szenes, which allows an efficient computation of
$\mathcal{B}(\Phi,\Lambda)$.

In Section 2,
we give an outline of an algorithm that efficiently computes the needed ingredients of this formula for
classical root systems. We also give several simple examples.

In Section 3, we show how this applies  to the symplectic volume of the moduli space of flat $G$-connections on
a Riemann surface of genus $g$ with  $s$ boundary components.  We obtain  an expression for the symplectic volume
by taking the limit of the Verlinde formula.  We then show that our formula thus obtained coincides with
that of Witten (including the constants) given in terms of the Riemannian volumes of $G$ and $T$.
We also give examples of these functions.

In Sections 4 and 5, we give several examples and tables of Witten volumes, which include some examples from
\cite{kmt0}, \cite{kmt1} and \cite{kmt2}. We give an idea of  computational limitation of our algorithm
(written as a simple Maple program) in terms of the rank of the group $G$ and the number of elements in $\Phi$.
Following Y. Komori, K. Matsumoto, H. Tsumura, we also give some examples of rational
multiple zeta values.

To compute more examples, our Maple program is available
on the webpage of the last author.

Finally, in the appendix, for completeness, we include a proof of
Szenes formula.

\subsection*{Acknowledgements}  Part of this work was completed when all three authors
were at Mathematisches Forschunginstitut Oberwolfach as a part of
Research in Pairs programme in February 2012. We would like to
express our gratitude to the institute for their hospitality.

The first author was partially supported by a PRIN$2009$ grant, the
second author was partially supported by Bo\u{g}azici University (B.U.)
Research Fund $5076$.  The third author wishes to thank Bo\u{g}azici
University for support of research visits.

We thank Shrawan Kumar for pointing out a minor mistake in the
earlier version of this manuscript in the formula of Proposition
\ref{P:factor} for the case of Lie group of type $G_2$.

We also thank the referees for their suggestions on the long abstract version of this manuscript.

\newpage
\section*{Notations}
\[\begin{array}{ll}
U &\mbox{r-dimensional real vector space.}\\
V &\mbox{dual of $U$; $v \in V$.}\\
\ll \;,\; \rr &\mbox{the pairing between $U$ and $V$.}\\
\Gamma &\mbox{a lattice in $U$; $\gamma\in \Gamma$.}\\
\Lambda:=\Gamma^* &\mbox{dual lattice in $V$; $\ll \Gamma,\Lambda\rr\subset \Z$, $\lambda\in \Lambda$.}\\
\Phi &\mbox{a sequence of  vectors in $V$; $\phi\in \Phi$.}\\
\mathcal{B}(\Phi, \Lambda) &\mbox{multiple Bernoulli series associated to $\Phi$ and $\Lambda$.}\\
H_{\phi} &\mbox{hyperplane in $U$ comprising of vectors $u$ satisfying $\ll u,\phi \rr=0$.}\\
\mathcal{H} &\mbox{arrangement of hyperplanes}\\
\Phi^{eq}(\CH) &\mbox{a set of equations for $\CH$.}\\
\CR_{\mathcal{H}} &\mbox{ring of rational functions on $U$ with poles along $\mathcal{H}$.}\\
\CS_{\mathcal{H}} &\mbox{a subspace of $\CR_{\mathcal{H}}$ given in Definition \ref{def:simple}.}\\
\CG_{\mathcal{H}} &\mbox{a subspace of $\CR_{\mathcal{H}}$ given in Definition \ref{def:simple}.}\\
{\bf R} &\mbox{projector from $\CR_{\mathcal{H}}$ to $\CS_{\mathcal{H}}$.}\\
\mathcal{T}({\mathcal H},\Lambda) &\mbox{topes associated to the system $(\mathcal H,\Lambda)$; $\tau$  a tope.}\\
\mathfrak{B}(\Phi^{eq}) &\mbox{the set of subsets of $\Phi^{eq}$ forming a basis for $V$.}\\
\mathcal{M}_\mathcal{H}, \hat{\CR}_{\mathcal H} &\mbox{spaces of functions defined in \ref{def:mh}.}\\
V_{reg}=V_{reg}(\CH, \Lambda) &\mbox{subset of $V$ regular with
respect to $(\CH,\Lambda)$.}
\end{array}\]

\section{Szenes formula for multiple  Bernoulli  series}\label{sub:hyper}

\subsection{Functions on complement of  hyperplanes}
In this subsection,  $U$ is an $r$-dimensional  complex vector
space, and we recall briefly some structure theorems for the ring of
rational functions that are regular on the complement of a union of
hyperplanes \cite{brver1}.

Let $V$ be the dual vector space to $U$.
If $\phi\in V$, we denote by $H_\phi=\{u\in U;\ll\phi,u\rr=0\}$.

Let $\CH=\{H_1,\ldots, H_N\}$ be  a  set of   hyperplanes in $U$. Then,
we may choose $\phi_k \in V$ such that $H_k=H_{\phi_k}$;  the element $\phi_k$ will be called an equation of $H_k$.
Clearly, an equation $\phi_k$ is not unique,
it is determined up to a nonzero scalar multiple.

We consider  $$U_{\mathcal H}:=\{u\in U; \langle \phi_k,u\rangle\neq
0 \,\, {\rm for \, all}\, k\,\},$$  an open subset of $U$. An
element of $U_{\mathcal H}$ will be called {\it regular}.

\begin{definition}
We denote by $S(V)$ the symmetric algebra of $V$  and identify it with
the ring of polynomial functions on $U$.

We denote by $\CR_{\mathcal H}$ the ring of regular rational
functions on $U$ that are regular on $U_\CH$. That is, the ring
generated by $S(V)$ together with inverses of the linear forms
$\phi_k$ defining $\CH$.

\end{definition}

The ring $\mathcal D(U)$ of differential operators on $U$ with polynomial coefficients acts on $\CR_{\mathcal H}$.
In particular, $U$ operates on $\CR_\mathcal H$ by differentiation.
We denote by $\partial(U)\CR_\CH$ the space of functions in $\CR_\mathcal H$ obtained by differentiation.

If $V$ is one dimensional, then the ring $\CR_\CH$ is the ring of Laurent polynomials $\C[z,z^{-1}]$, and the function $z^i$,
for $i\neq -1$, is obtained as a derivative $\frac{d}{dz}\frac{1 }{i+1} z^{i+1}$.  Thus $\CR_\CH=\frac{d}{dz}\CR_\CH\oplus \C z^{-1}$.
If $f=\sum_{n} a_n z^n$ is an element of $\C[z,z^{-1}]$, we denote by  $\Res_{z=0}f$ the coefficient $a_{-1}$ of $z^{-1}$ in the expression of $f$.
This linear form is characterized by the fact that it vanishes on $\frac{d}{dz} \CR_\CH$.

\medskip

By analogy to the one dimensional case, a linear functional on
$\CR_{\CH}$ vanishing on   $\partial(U)\CR_\CH$  will be called a
`residue'.

\medskip
Let us thus analyze the space   $\CR_{\CH}$ modulo  $\partial(U)\CR_\CH$.
\medskip

Let us consider  a set  $\Phi^{eq}:=\{\phi_1,\phi_2,\ldots, \phi_N\}$  of equations for $\mathcal H$. A subset
$\sigma$ of $\Phi^{eq}$ will be called a \textit{basis} if the elements $\phi_k$  in $\sigma$ form a basis of $V$.
We denote by $\mathfrak{B}(\Phi^{eq})$ the set of such subsets $\sigma$.
A subset  $\nu$ of $\Phi^{eq}$ will be called  \textit{generating} if  the elements $\phi_k$  in $\nu$ generate the vector space $V$.

\begin{definition}\label{def:simple}

$\bullet$ Let $\sigma:=\{\alpha_1,\alpha_2,\ldots,\alpha_r\} \in \mathfrak{B}(\Phi^{eq})$.  Consider the `simple fraction'
$$f_\sigma(z):=\frac{1}{\prod_{k=1}^r \alpha_k(z)}.$$
We denote by $\CS_{\mathcal H}$ the subspace of $\CR_{\mathcal H}$ generated by the elements
$f_\sigma, \; \sigma \in \mathfrak{B}(\Phi^{eq})$.

$\bullet$ Let $\nu=[\alpha_1,\ldots, \alpha_k]$ be  a sequence of $k$ elements of $\Phi^{eq}$ and ${\bf n}=[n_1,n_2,\ldots,n_k]$ be a sequence of positive integers.
We define
$$\theta(\nu,{\bf n})=\frac{1}{\alpha_1^{n_1}\cdots \alpha_{k}^{n_k}}.$$

$\bullet$ We denote by ${\mathcal G}_{\mathcal H}$ the subspace of $\CR_{\mathcal H}$ generated by the elements
$\theta(\nu,\bf{n})$ where $\nu$  is generating.
\end{definition}

As the notation suggests, the spaces $\CR_\CH$, $\CS_\CH$ and  ${\mathcal G}_{\mathcal H}$ depend only on $\CH$.
The term simple fraction comes from the fact that if $\sigma=\{\phi_1,\phi_2,\ldots,\phi_r\}$ is a basis, then we can choose coordinates $z_i$ on $U$ so that $\phi_i(z)=z_i$, so that for this system of coordinates
$f_\sigma(z)=\frac{1}{\prod_{i=1}^r z_i}$.

\medskip

We recall the following `partial fraction' decomposition theorem.
\begin{lemma}\label{ind}
Let $\nu$ be a subset of $\Phi^{eq}$ generating a $t$ dimensional subspace of $V$.
 Then $\theta(\nu,{\bf n})$ may be written as a linear combination of elements
$\theta(\sigma,{\bf m})=\frac{1}{\alpha_{i_1}^{m_1}\cdots \alpha_{i_t}^{m_t}}$ where
$\sigma:=\{\alpha_{i_1}, \ldots,\alpha_{i_t}\}$ is a subset of $\nu$ consisting of $t$ independent elements
and ${\bf m}=\{m_1,\ldots, m_t\}$ a sequence of positive integers.
\end{lemma}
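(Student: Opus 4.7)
The plan is to proceed by induction on the cardinality of the index set $|\nu|$. After combining repeated factors (so that we may assume each distinct $\alpha$ appears with a single positive exponent), the base case $|\nu|=t$ is immediate: a generating set of cardinality $t$ for a $t$-dimensional subspace is automatically a basis, so $\sigma=\nu$ witnesses the claim.

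For the inductive step $|\nu|>t$, the elements of $\nu$ are linearly dependent, so some $\alpha_0\in\nu$ admits a representation $\alpha_0=\sum_{\alpha\in C} d_\alpha\,\alpha$ with $C\subseteq \nu\setminus\{\alpha_0\}$ and $d_\alpha\neq 0$ for every $\alpha\in C$. Multiplying this identity by $\bigl(\alpha_0^{n_{\alpha_0}+1}\prod_{\beta\neq\alpha_0}\beta^{n_\beta}\bigr)^{-1}$ produces the key algebraic identity
\[
\theta(\nu,\mathbf{n}) \;=\; \sum_{\alpha\in C} d_\alpha\, \theta(\nu,\mathbf{n}^{(\alpha)}),
\]
where $\mathbf{n}^{(\alpha)}$ is obtained from $\mathbf{n}$ by the replacements $n_{\alpha_0}\mapsto n_{\alpha_0}+1$ and $n_\alpha\mapsto n_\alpha-1$. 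A term for which the original $n_\alpha=1$ becomes $\theta(\nu\setminus\{\alpha\},\cdot)$, supported on a strictly smaller index set. Here the span is preserved: since $d_\alpha\neq 0$ the relation rearranges to $\alpha=d_\alpha^{-1}\bigl(\alpha_0-\sum_{\beta\in C\setminus\{\alpha\}}d_\beta\beta\bigr)$, so $\alpha\in\mathrm{span}(\nu\setminus\{\alpha\})$ and hence $\mathrm{span}(\nu\setminus\{\alpha\})=\mathrm{span}(\nu)$ is still $t$-dimensional. The outer induction hypothesis therefore rewrites this term in the required form.

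For the remaining terms, where $n_\alpha>1$, the index set is unchanged, so a secondary induction is needed. I would use the invariant $N(\mathbf{n}):=\sum_{\alpha\in C}n_\alpha$, which deliberately excludes $\alpha_0$; each application of the key identity strictly decreases $N$ by one, because the increment to $n_{\alpha_0}$ falls outside the sum, while the decrement of $n_\alpha$ for $\alpha\in C$ lowers it. The inner induction bottoms out at $N(\mathbf{n})=|C|$, where every $n_\alpha$ with $\alpha\in C$ equals $1$ and every right-hand-side term in the identity drops an index, returning the problem to the outer hypothesis.

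The main delicacy is nesting the two inductions so that termination is transparent. This rests on two observations: (i) the choice $\alpha_0\notin C$ makes $N$ genuinely monotone under iteration of the key identity, and (ii) the geometric fact that removing any element of $C$ from $\nu$ leaves the span intact, so the outer hypothesis is always applied to a subset of $\nu$ that still generates the full $t$-dimensional subspace $\mathrm{span}(\nu)$.
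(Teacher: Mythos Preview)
The paper does not actually prove this lemma: it is introduced with the phrase ``We recall the following `partial fraction' decomposition theorem'' and is followed immediately by an illustrative example, with the result itself attributed to the reference \cite{brver1}. So there is no in-paper proof to compare against.

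Your argument is correct and is precisely the standard partial-fraction mechanism underlying such results. The key identity you write down,
\[
\theta(\nu,\mathbf{n})=\sum_{\alpha\in C}d_\alpha\,\theta(\nu,\mathbf{n}^{(\alpha)}),
\]
obtained by dividing the dependence relation $\alpha_0=\sum_{\alpha\in C}d_\alpha\alpha$ through by $\alpha_0\prod_\beta\beta^{n_\beta}$, is exactly the move illustrated by the paper's example
\[
\frac{1}{z_1z_2(z_1+z_2)}=\frac{1}{z_1(z_1+z_2)^2}+\frac{1}{z_2(z_1+z_2)^2}.
\]
Your double induction (outer on $|\nu|$, inner on $N(\mathbf{n})=\sum_{\alpha\in C}n_\alpha$) is well set up: keeping $\alpha_0$ outside $C$ so that $N$ genuinely drops, and checking that deleting any $\alpha\in C$ preserves the span, are exactly the two points one must be careful about, and you handle both cleanly.
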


\begin{example}
$$\frac{1}{z_1z_2(z_1+z_2)}=\frac{1}{z_1(z_1+z_2)^2}+
\frac{1}{z_2(z_1+z_2)^2}.$$
\end{example}\bigskip

Finally, the following  theorem is proved  in Brion-Vergne \cite{brver1}.
\begin{theorem}
$$\CR_{\mathcal H}=\partial(U)\CR_{\mathcal H}\oplus \CS_{\mathcal H}.$$
\end{theorem}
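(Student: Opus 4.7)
The plan is to prove the two sides of the direct-sum decomposition separately: first establish the equality $\CR_\CH = \partial(U)\CR_\CH + \CS_\CH$, and then verify that $\partial(U)\CR_\CH \cap \CS_\CH = 0$.

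For the sum, I would apply the partial-fraction decomposition of Lemma \ref{ind} to reduce an arbitrary element of $\CR_\CH$ to a finite linear combination of expressions of the form $p\cdot \theta(\sigma, \mathbf{n})$, with $p \in S(V)$ a polynomial and $\sigma \subseteq \Phi^{eq}$ a linearly independent set. Two cases then arise. If $\sigma$ does not span $V$, choose a nonzero $u \in U$ with $\langle \alpha, u\rangle = 0$ for every $\alpha \in \sigma$. Setting $V_0 = \{v \in V : \langle v, u\rangle = 0\}$ and picking $v_0 \in V$ with $\langle v_0, u\rangle = 1$ gives a splitting $V = \C v_0 \oplus V_0$ under which $\partial_u : S(V) \to S(V)$ is manifestly surjective, so one can solve $\partial_u \tilde p = p$; since $u$ annihilates $\sigma$ we have $\partial_u \theta(\sigma,\mathbf{n}) = 0$, hence $p\cdot \theta(\sigma,\mathbf{n}) = \partial_u(\tilde p \cdot \theta(\sigma,\mathbf{n})) \in \partial(U)\CR_\CH$. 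If instead $\sigma = \{\alpha_1,\ldots,\alpha_r\}$ is a basis of $V$, use the dual basis $u_1,\ldots,u_r$ and coordinates $z_i = \alpha_i$; expanding $p$ in these coordinates, each monomial $z_1^{e_1}\cdots z_r^{e_r}$ with $e_i \in \Z$ is either the derivative $\partial_{u_j}\bigl((e_j+1)^{-1}z_1^{e_1}\cdots z_j^{e_j+1}\cdots z_r^{e_r}\bigr) \in \partial(U)\CR_\CH$ (if some $e_j \neq -1$), or precisely $f_\sigma \in \CS_\CH$ (if every $e_i = -1$).

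For directness I would construct a linear projector $\mathbf{R} : \CR_\CH \to \CS_\CH$ whose kernel contains $\partial(U)\CR_\CH$. The natural building blocks are iterated residue functionals: to each ordered basis $\sigma = (\alpha_1,\ldots,\alpha_r)$ drawn from $\Phi^{eq}$ one attaches
$$\Res_\sigma(f) = \Res_{z_r=0}\cdots \Res_{z_1=0}\, f$$
computed in the coordinates $z_i = \alpha_i$. A quick check --- using that $\Res_{z_j}$ commutes with $\partial_{z_i}$ for $i \neq j$ and that $\Res_{z_j}\partial_{z_j} = 0$ --- shows each $\Res_\sigma$ annihilates $\partial(U)\CR_\CH$.

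The main obstacle is assembling these individual functionals into an honest projector onto $\CS_\CH$. This is delicate because the simple fractions themselves satisfy nontrivial linear relations --- for example $1/(z_1 z_2) = 1/(z_1(z_1+z_2)) + 1/(z_2(z_1+z_2))$ --- and the $\Res_\sigma$ carry matching dependencies. I would proceed by induction on $r = \dim V$: peel off one variable using a generic linear form $\xi \in U$, take the residue at $\xi = 0$ to produce a rational function on the restricted arrangement inside $\xi^\perp$, and apply the inductive hypothesis (both surjectivity and directness) to that smaller arrangement. Threading the cases of the surjectivity argument consistently through this induction --- and verifying that the resulting map is the identity on $\CS_\CH$ --- is where I expect the real technical work to lie.
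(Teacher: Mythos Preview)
The paper does not actually prove this theorem; it merely states it and cites Brion--Vergne \cite{brver1} for the proof. So there is no proof in the paper to compare against directly.

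Evaluating your proposal on its own merits: your argument for the sum $\CR_\CH = \partial(U)\CR_\CH + \CS_\CH$ is correct and is essentially the standard one. The reduction via Lemma~\ref{ind} to terms $p\cdot\theta(\sigma,\mathbf n)$ with $\sigma$ independent, followed by the two cases (integrate along a direction in $\bigcap_{\alpha\in\sigma}H_\alpha$ when $\sigma$ does not span; reduce to Laurent monomials in adapted coordinates when $\sigma$ is a basis), is exactly how Brion--Vergne proceed.

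For directness, your instinct to use iterated residues is right, and your verification that each $\Res_\sigma$ annihilates $\partial(U)\CR_\CH$ is fine. The gap, which you candidly flag yourself, is that you have not produced a finite family of such functionals that \emph{separates} $\CS_\CH$. Your proposed induction on $r$ via a generic $\xi$ is vague and would be awkward to execute: restricting to $\xi=0$ for generic $\xi$ does not interact well with the fixed hyperplane arrangement $\CH$, so the ``restricted arrangement'' you invoke is not one to which the inductive hypothesis cleanly applies. In \cite{brver1} the directness is obtained instead by fixing a total order on $\Phi^{eq}$ and using the Orlik--Solomon no-broken-circuit bases: these furnish a distinguished collection of ordered bases $\overrightarrow{\sigma}$ for which one can show simultaneously that the $f_\sigma$ span $\CS_\CH$ and that the matrix $\bigl(\Res^{\overrightarrow{\sigma}}f_\tau\bigr)$ is upper-triangular with ones on the diagonal (this is the ``diagonal subset'' appearing just after the theorem in the present paper). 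That combinatorial input is what replaces the induction you sketch and closes the argument.
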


The projector, denoted by ${\bf R}$, from $\CR_{\mathcal H}$ to $\CS_{\mathcal H}$ will be called the
\textit{total residue}.

In view of this theorem, a residue is just a linear form on
$\CS_{\mathcal H}$.  When $\mathcal H$ is the set of hyperplanes
with equations the positive coroots of a simple compact Lie group
$G$, the dimension of $\CS_{\mathcal H}$ is given by the product of
exponents of $G$ (\cite{os}). In Section \ref{section:clroot}, we
will give an explicit basis for $\CS_{\mathcal H}$ for simple Lie
algebras of type $A,B$ and $C$ (which defines the same set of
hyperplanes as $B$) with dual basis consisting of iterated residues.

\subsection{Szenes polynomial}
In this section and for the rest of the article, $V$ will denote a {\bf real} vector space of dimension $r$.

Let $U$ be the dual vector space of $V$. Let $\Lambda$ be a lattice in $V$ with dual lattice $\Gamma$ in $U$.

Let $\CH:=\{H_1,H_2,\ldots, H_N\}$ be a real arrangement of  hyperplanes  in $U$.
We say that $\Lambda$ and $\CH$ are \textit{compatible} if the hyperplanes in $\CH$ are rational with respect to
$\Lambda$, that is, they can be defined by equations $\phi_k\in \Lambda$.  If $\Lambda'$ is another lattice commensurable
with $\Lambda$, then   $\Lambda'$ and $\CH$ are also compatible.

Thus we now consider a lattice $\Lambda$ and a real arrangement of hyperplanes
$\CH=\{H_1,H_2,\ldots, H_N\}$ in $U$ rational
with respect to $\Lambda$.

We choose  $\Phi^{eq}=\{\phi_1,\phi_2,\ldots,\phi_N\}$, a set of defining equations for $\CH$, with each
$\phi_i$ in $\Lambda$. We sometimes refer to $\CH$ only via its set of equations $\Phi^{eq}$ and
write $\CH=\cup\{\phi_k=0\}$.

We denote  the complex arrangement defined by $\cup\{\phi_k=0\}$ in $U_{\C}$ by the same letter $\CH$.
We denote by $U_\mathcal H=\{\prod_k \phi_k\neq 0\}$ the corresponding open subset of $U_\C$.

An \textit{admissible hyperplane} $W$ in $V$  (for the system $\CH$) is an hyperplane generated by $(r-1)$ linearly
independent elements $\phi_k$ of $\Phi^{eq}$. Such an hyperplane will also be called an (admissible) \textit{wall}.
An admissible \textit{affine} wall is a translate of a wall by an element of $\Lambda$.

An element $v\in V$ is called \textit{regular} for $(\CH,\Lambda)$  if $v$ is not on any affine wall (we will just say that $v$ is  regular).  The meaning of the word regular is thus different for elements $v\in V$ ($v$ is not on any affine wall) and $u\in U_\C$
($u$ is such that $\prod_k \langle \phi_k,u\rangle\neq 0$). However, it will be clear what regular means in the context.

A \textit{tope} $\tau$ is a connected component of the complement of all affine hyperplanes. Thus a tope $\tau$ is a
connected open subset of $V$ consisting of regular elements.
We denote the set of topes by $\mathcal{T}(\CH,\Lambda)$. As the notation indicates, $\mathcal{T}(\CH,\Lambda)$ does not depend on the choice of  equations  for $\CH$.

\begin{example}\label{ex:two}
Let $V=\R e_1\oplus \R e_2$  and  $\Lambda=\Z e_1\oplus \Z e_2$.  Let $U$ be its dual with basis $\{e^1,e^2\}$.
We express $z\in U_\C$ as $z=z_1 e^1+z_2 e^2$, and consider  the set of hyperplanes $$\CH=\{\{z_1=0\}, \{z_2=0\}, \{z_1+z_2=0\}\}$$
with the set of equations $\Phi^{eq}=\{e_1,e_2, e_1+e_2\}$.
Figure \ref{a2} depicts topes associated to this pair.

\begin{figure}
\begin{center}
 \includegraphics[width=37mm]{tope2mod.mps}\\
 \caption{$\CT(\CH, \Lambda)$ for Example \ref{ex:two}}\label{a2}
 \end{center}
\end{figure}

\end{example}

\begin{example}\label{ex:btwo}
Let $V=\R e_1\oplus \R e_2$  and  $\Lambda=\Z e_1\oplus \Z e_2$.
Let $U$ be its dual with basis $\{e^1,e^2\}$.
We express $z\in U_\C$ as $z=z_1 e^1+z_2 e^2$, and consider  the set of hyperplanes
$$\CH=\{\{z_1=0\}, \{z_2=0\}, \{z_1+z_2=0\}, \{z_1-z_2=0\}\}$$
with the set of equations $\Phi^{eq}=\{e_1,e_2, e_1+e_2, e_1-e_2\}$.
Figure \ref{b2} depicts topes associated to this pair.
\end{example}

\begin{figure}
\begin{center}
 \includegraphics[width=37mm]{tope3mod.mps}\\
 \caption{$\CT(\CH, \Lambda)$ for Example \ref{ex:btwo}}\label{b2}
 \end{center}
\end{figure}

We denote by $V_{reg}(\CH,\Lambda)$ (or simply $V_{reg}$) the set of ($\CH,\Lambda$) regular elements of $V$.
It is an open subset of $V$ which is the disjoint union of all topes.

A locally constant function on $V_{reg}$ is a function on $V_{reg}$ which is constant on each tope.
A piecewise polynomial function on $V_{reg}$ is a function on $V_{reg}$ which is given by a polynomial expression on each
tope.

If $t\in \R$, we denote by $[t]$ the integral part of $t$, and by $\{t\}=t-[t]$ the fractional part of $t$.
If $\gamma \in \Gamma$ vanishes on an admissible hyperplane $W$, and $c$ is a constant, then the function
$v\to \{\langle\gamma,v\rangle+c\}$ is piecewise polynomial (piecewise linear) and is periodic with respect to $\Lambda$.
Szenes residue formula provides an  algorithm  to describe
 Bernoulli series in terms of these basic functions.

\bigskip

\begin{definition}\label{def:mh} Let $\mathcal M_{\mathcal H}$ be the space of functions $h/Q$ where $Q$ is a product of
linear forms belonging to $\Phi^{eq}$, and $h$  a holomorphic function defined in a neighborhood of $0$ in $U_\C$.

We define the space $\hat \CR_{\mathcal H}$ as the space of functions $\hat h /Q$ where
$\hat h=\sum_{k=0}^{\infty} P_k$ is a  formal power series and $Q$ is a product of linear forms belonging to
$\Phi^{eq}$ as before.
\end{definition}

Taking the Taylor series $\hat h$ of $h$ at $0$ defines an
injective map from  $\mathcal M_{\mathcal H}$ to $\hat \CR_{\mathcal H}$.
The projector ${\bf R}$ from $\CR_{\mathcal H}$ to $\CS_{\mathcal H}$ extends to $\hat \CR_{\mathcal H}$.
Indeed ${\rm {\bf R}}$ vanishes outside the homogeneous components of degree $-r$ of the graded space $\CR_{\mathcal H}$.
Thus if $h/Q$ is an element in $\mathcal M_{\mathcal H}$, with $Q$ a product of $N$ elements of $\Phi^{eq}$, we take the Taylor series $[h]_{N-r}$ of $h$ up to order $N-r$, and define ${\rm {\bf R}}(\frac{h}{Q})={\rm {\bf R}} (\frac{[h]_{N-r}}{Q}).$
For example, the equality
$$\frac{e^{zt}}{e^z-1} = \frac{1}{z} (z\frac{e^{zt} }{e^z-1})$$
identifies  the function  $\frac{e^{zt}}{e^z-1}$ to an element of $\mathcal M_{\mathcal H}$ with ${\mathcal H}=\{0\}$.
Note that each homogeneous term of the Taylor  series expansion
$$z\frac{e^{zt}}{e^z-1}=\sum_{k=0}^{\infty} B(k, t)\frac{z^k}{k!},$$ where $B(k,t)$ is the $k^{\text{th}}$ Bernoulli
polynomial in $t$ as before, is a polynomial in $t$.

Let $f\in \CR_{\CH}$, $z\in U_{\CH}$ and $\gamma\in \Gamma$.
Then if $z$ is small,   $2i\pi\gamma-z$ is still a regular element of $U$. Consider the series
$$S(f,z,v)= \sum_{\gamma\in \Gamma}  f(2i\pi\gamma-z) e^{ \langle v,2i\pi \gamma\rangle }.$$
When $f$ decreases sufficiently quickly at infinity, the series
$S(f,z,v)$ is absolutely convergent and defines a continuous function of $v$.
In general, as $f$ is of at most polynomial growth, the series  $$\sum_{\gamma\in \Gamma} f(2i\pi\gamma-z) e^{ \langle v,2i\pi \gamma\rangle }$$
is the Fourier series of a  generalized function on  $V/\Lambda$.

Multiplying $S(f,z,v)$ by the exponential $e^{-\langle z,v\rangle}$ we introduce the following definition.

\begin{definition}
Let $f\in \CR_{\CH}$, $z\in U_{\CH}$ and small. We define the generalized function of $v$ by
$$\AV^{\Lambda}(f)(z,v)= \sum_{\gamma\in \Gamma}  f(2i\pi\gamma-z) e^{ \langle v,2i\pi \gamma-z\rangle }.$$
\end{definition}

The meaning of $\AV^{\Lambda}(f)$ is clear : average the function  $z\mapsto  f(-z)e^{-\langle v,z \rangle}$ over
$2i\pi \Gamma$ in order to obtain a function   on the complex torus $U_\C/2i\pi \Gamma$.

We  consider  $\AV^{\Lambda}(f)(z,v)$  as a  generalized function of $v\in V$ with coefficients meromorphic functions
of $z$ on $U_\C/2i\pi\Gamma$.  In fact, as we will see,
when $f$ is in $\CS_\CH$, the convergence of the series
$$\sum_{\gamma\in \Gamma}  f(2i\pi\gamma-z) e^{ \la v,2i\pi \gamma \ra }$$ holds in the sense of the Fourier series of an
$L^2$- periodic function of $v\in V/\Lambda$, and
$$v\to e^{-\ll v,z\rr}\left( \sum_{\gamma\in \Gamma}  f(2i\pi\gamma-z) e^{ \langle v,2i\pi \gamma\rr }\right)=\AV^{\Lambda}(f)(z,v)$$
is a locally constant function of  $v\in V_{reg}$ with values in $\CM_{\CH}$.


\bigskip

Note the covariance relation. For $\lambda\in \Lambda$,

\begin{equation}\label{eq:covariance}
\AV^{\Lambda}(f)(z,v+\lambda)=e^{-\ll \lambda,z\rr}\AV^{\Lambda}(f)(z,v).
\end{equation}

It is easy to compare
$\AV^{\Lambda}(f)(z,v)$ when we change the lattice $\Lambda$.

\begin{lemma}\label{lem: first compare}
Let $f\in \CR_\CH$. If $\Lambda^1\subset \Lambda^2$,  then
\begin{equation}\label{eq:SZlattices}
\AV^{\Lambda^2}(f)(z,v)=|\Lambda^2/\Lambda^1|^{-1}
\sum_{\lambda\in \Lambda^2/\Lambda^1}\AV^{\Lambda^1}(f)(z,v+\lambda).
\end{equation}
\end{lemma}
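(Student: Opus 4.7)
The plan is character-theoretic: substitute the defining series for $\AV^{\Lambda^1}(f)(z,v+\lambda)$ on the right-hand side, exchange the sums over $\lambda$ and $\gamma$, and exploit orthogonality of characters on the finite abelian group $\Lambda^2/\Lambda^1$ to restrict the $\gamma$-sum to the sublattice $\Gamma^2 := (\Lambda^2)^*$.

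Setting $\Gamma^i := (\Lambda^i)^*$ (so that the inclusion $\Lambda^1\subset\Lambda^2$ dualises to $\Gamma^2\subset\Gamma^1$, with $|\Gamma^1/\Gamma^2|=|\Lambda^2/\Lambda^1|$), substituting and formally interchanging the two summations gives
\[
\sum_{\lambda \in \Lambda^2/\Lambda^1} \AV^{\Lambda^1}(f)(z,v+\lambda) = \sum_{\gamma \in \Gamma^1} f(2i\pi\gamma - z)\,e^{\la v, 2i\pi\gamma - z\ra} \sum_{\lambda \in \Lambda^2/\Lambda^1} e^{\la\lambda,\,2i\pi\gamma - z\ra}.
\]
The essential step is the evaluation of the inner finite sum. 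The exponential factors as $e^{\la\lambda, 2i\pi\gamma - z\ra} = e^{2i\pi\la\lambda,\gamma\ra}\,e^{-\la\lambda, z\ra}$; the first factor depends only on $\lambda \bmod \Lambda^1$ (since $\gamma \in \Gamma^1 = (\Lambda^1)^*$) and defines a character of $\Lambda^2/\Lambda^1$, while the second is absorbed using the covariance relation \eqref{eq:covariance} for $\AV^{\Lambda^1}$, which translates a shift of $v$ by an element of $\Lambda^1$ precisely into multiplication by $e^{-\la\cdot,z\ra}$. Standard orthogonality of characters on a finite abelian group then yields $\sum_\lambda e^{2i\pi\la\lambda,\gamma\ra} = |\Lambda^2/\Lambda^1|$ when $\gamma \in \Gamma^2$ and $0$ otherwise, so only the $\gamma \in \Gamma^2$ terms survive; dividing by $|\Lambda^2/\Lambda^1|$, these reassemble exactly into $\AV^{\Lambda^2}(f)(z,v)$.

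The main obstacle is justifying the interchange of summation at the beginning, since for general $f\in\CR_{\CH}$ the series $\AV^\Lambda(f)(z,v)$ is not absolutely convergent but is only defined as the Fourier series of a generalized function of $v\in V/\Lambda$. One clean resolution is to pair both sides against a smooth test function on $V/\Lambda^2$, lift to $V/\Lambda^1$ via the covering map (which multiplies integrals by $|\Lambda^2/\Lambda^1|$), and check the identity on each Fourier mode, where the character-orthogonality computation above is rigorous. Alternatively, by the direct-sum decomposition $\CR_{\CH} = \partial(U)\CR_{\CH} \oplus \CS_{\CH}$ recalled in the previous subsection, one may reduce by linearity to the case $f \in \CS_{\CH}$, for which, as noted in the paper, the series $\AV^\Lambda(f)$ converges in $L^2$ on $V_{reg}$ and Fubini applies without issue.
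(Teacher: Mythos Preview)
Your argument is essentially identical to the paper's: both expand the definition and invoke orthogonality of characters on the finite group $\Lambda^2/\Lambda^1$ to cut the $\gamma$-sum down from $\Gamma^1$ to $\Gamma^2$. The paper's proof is three lines and works purely at the level of formal Fourier series, so your careful discussion of convergence (via pairing with test functions or reduction to $\CS_\CH$) goes somewhat beyond what the authors themselves supply.
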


\begin{proof}
Denote the dual of $\Lambda^i$ by $\Gamma^i$.  Then,
$$\AV^{\Lambda^1}(f)(z,v+\lambda)= \sum_{\gamma\in \Gamma^1}  f(2i\pi\gamma-z) e^{ \langle v+\lambda,2i\pi \gamma-z\rangle }$$
and the sum over $\lambda\in \Lambda^2/\Lambda^1$ of
$e^{\ll \lambda,2i\pi \gamma\rr}$ is equal  to $0$ except when $\gamma\in \Gamma^2$.
\end{proof}

\begin{example}\label{ex:dim1Eis}
Let $V=\R$, $\Lambda=\Z$, $z\in \C$ small, $z\neq 0$,  and $f(z)=\frac{1}{z}$.  For $v\in \R$,
$$\AV^{\Lambda}(f)(z,v)=\sum_{n\in \Z}  \frac{e^{ v(2i\pi n-z)}}{(2i\pi n -z)}= e^{-zv}\sum_{n\in \Z}  \frac{e^{ 2i\pi n v}}{(2i\pi n -z)}.$$
This series is not absolutely convergent, but the  oscillatory factor $e^{2i\pi nv}$ insures the convergence in the distributional sense as a function of $v$.
We have
\begin{equation}\label{eq:AV}
\AV^{\Lambda}(f)(z,v)=\frac{e^{-[v]z}}{1-e^z}
\end{equation}
(recall that  $[v]$ denotes the integral part of $v$).

Indeed, let us compute the $L^2$-expansion of the periodic function
$v\mapsto \frac{e^{(v-[v])z}}{1-e^z}$.
By definition,  this  is
$$\sum_{n\in \Z} \left(\int_{0}^1 \frac{e^{(v-[v])z}}{1-e^z} e^{-2i\pi n v} dv\right) e^{2i\pi n v}=
\sum_{n\in \Z} \left(\int_{0}^1 \frac{e^{v(z-2i\pi n)}}{1-e^z} dv\right) e^{2i\pi n v}
$$
$$=\sum_{n\in \Z}  \frac{e^{ (z-2i\pi n)}-1}{(1-e^z)(z-2i\pi n)} e^{2i\pi n v}=\sum_{n\in \Z}  \frac{1}{(2i\pi n -z)} e^{2i\pi n v}.$$

\end{example}

We see in this one dimensional example  that $\AV^{\Lambda}(f)(z,v)$ is a locally constant function of $v$.

%
%

In general, we have the following proposition.
\begin{proposition}
If $f\in \CS_\CH$, the function
$v\to \AV^{\Lambda}(f)(z,v)$ is a locally constant function on $V_{reg}$, with values in $\CM_\CH$.
\end{proposition}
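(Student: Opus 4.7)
The plan is to reduce by linearity to the case of a simple fraction $f_\sigma$ and then compute explicitly. Since $\CS_\CH$ is spanned by the $f_\sigma$ with $\sigma=\{\alpha_1,\ldots,\alpha_r\}\in\mathfrak{B}(\Phi^{eq})$ and $\AV^\Lambda$ is linear in $f$, it suffices to treat $f=f_\sigma$ for a single such $\sigma$. Fix $\sigma$, and let $\Lambda_\sigma:=\bigoplus_k\Z\alpha_k\subset\Lambda$, a sublattice of finite index. Let $\{\gamma_1^\sigma,\ldots,\gamma_r^\sigma\}$ be the dual basis of $U$, characterized by $\langle\alpha_k,\gamma_j^\sigma\rangle=\delta_{kj}$; it generates the dual lattice $\Gamma_\sigma\supset\Gamma$. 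Setting $z_k:=\langle\alpha_k,z\rangle$ and $v_k:=\langle\gamma_k^\sigma,v\rangle$, any $\gamma=\sum_k n_k\gamma_k^\sigma\in\Gamma_\sigma$ satisfies $\langle\alpha_k,2i\pi\gamma-z\rangle=2i\pi n_k-z_k$ and $\langle v,2i\pi\gamma-z\rangle=\sum_k v_k(2i\pi n_k-z_k)$, so the sum defining $\AV^{\Lambda_\sigma}(f_\sigma)$ splits as a product of one-dimensional sums:
\begin{equation*}
\AV^{\Lambda_\sigma}(f_\sigma)(z,v)=\prod_{k=1}^{r}\sum_{n_k\in\Z}\frac{e^{v_k(2i\pi n_k-z_k)}}{2i\pi n_k-z_k}.
\end{equation*}

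By Example \ref{ex:dim1Eis}, each factor equals $\frac{e^{-[v_k]z_k}}{1-e^{z_k}}$, hence
\begin{equation*}
\AV^{\Lambda_\sigma}(f_\sigma)(z,v)=\prod_{k=1}^{r}\frac{e^{-[v_k]z_k}}{1-e^{z_k}}.
\end{equation*}
Since $z_k/(e^{z_k}-1)$ is holomorphic at $0$, the right-hand side, viewed as a meromorphic function of $z$, has its poles only along the hyperplanes $\alpha_k=0$ with $\alpha_k\in\Phi^{eq}$, so it belongs to $\CM_\CH$. Its dependence on $v$ is entirely through the integer parts $[v_k]$, which are locally constant off the hyperplanes $\{v_k=n:n\in\Z\}$. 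Each such hyperplane is the translate by $n\alpha_k\in\Lambda_\sigma\subset\Lambda$ of the wall $\{v_k=0\}=\sum_{j\neq k}\R\alpha_j$, which is admissible because $\{\alpha_j:j\neq k\}\subset\Phi^{eq}$ is a family of $r-1$ linearly independent vectors. Hence all jump loci are genuine affine admissible walls for $(\CH,\Lambda)$, and $v\mapsto\AV^{\Lambda_\sigma}(f_\sigma)(z,v)$ is locally constant on $V_{reg}(\CH,\Lambda)$ with values in $\CM_\CH$.

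To pass from $\Lambda_\sigma$ back to $\Lambda$, I apply Lemma \ref{lem: first compare}:
\begin{equation*}
\AV^{\Lambda}(f_\sigma)(z,v)=|\Lambda/\Lambda_\sigma|^{-1}\sum_{\mu\in\Lambda/\Lambda_\sigma}\AV^{\Lambda_\sigma}(f_\sigma)(z,v+\mu).
\end{equation*}
Each summand remains locally constant in $v$ on $V_{reg}(\CH,\Lambda)$ (which is translation-invariant under $\Lambda$) and takes values in $\CM_\CH$; the same therefore holds for the finite sum, and linearity extends the statement to all $f\in\CS_\CH$. The main subtlety is the identification of the jump loci $\{v_k\in\Z\}$ with affine walls of $(\CH,\Lambda)$, which guarantees local constancy on the intended set; the rest of the argument is the factorization of $f_\sigma$ in $\sigma$-coordinates and the invocation of the one-dimensional computation.
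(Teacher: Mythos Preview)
Your proof is correct and follows essentially the same strategy as the paper: reduce by linearity to a simple fraction $f_\sigma$, factor the $\Lambda_\sigma$-average into a product of one-dimensional sums computed via Example~\ref{ex:dim1Eis}, and then pass from $\Lambda_\sigma$ to $\Lambda$ using Lemma~\ref{lem: first compare}. You are in fact a bit more explicit than the paper in verifying that the jump loci $\{v_k\in\Z\}$ (and their $\Lambda$-translates) are genuine affine admissible walls, which is exactly what is needed to conclude local constancy on $V_{\reg}(\CH,\Lambda)$.
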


We prove this by computing
$\AV^{\Lambda}(f)(z,v)$ explicitly for a simple fraction $f=f_\sigma$.
Recall that the set of equations $\Phi^{eq}$ is a subset  of  $\Lambda$.
Let $\sigma=\{\alpha_1,\alpha_2,\ldots,\alpha_r\}$ be an element of $\mathfrak{B}(\Phi^{eq})$.
The elements $\alpha_k$ belong to $\Lambda$. Let $Q_\sigma:=\oplus_{k=1}^r[0,1)\alpha_k$  be the semi-open parallelepiped spanned by $\sigma$.

\begin{definition}
Let $v$ be regular in $V$, and let $\sigma\in \mathfrak B(\Phi^{eq})$ be a basis. Define
$\Trans(v,\sigma)$  to be the set of elements $\lambda\in \Lambda$ such that $v+\lambda\in Q_\sigma$.
\end{definition}

This set depends only on the tope $\tau$ where $v$ belongs, hence we denote it by $\Trans(\tau,\sigma)$.

Let $\Lambda_\sigma$ be the sublattice  of $\Lambda$ generated by the elements in the basis $\sigma$. Then
the set $\Trans(\tau,\sigma)$ contains exactly $\Lambda/\Lambda_\sigma$ elements.

\begin{proposition}\label{pro:formulaEIS}
If $v\in \tau$ and $\sigma=\{\alpha_1,\alpha_2,\ldots,\alpha_r\} \in \mathfrak{B}(\Phi^{eq})$,
$$\AV^{\Lambda}(f_\sigma)(z,v)=\frac{1}{|\Lambda/\Lambda_{\sigma}|} \sum_{\lambda\in \Trans(\tau,\sigma)} \frac{e^{\langle \lambda,z\rangle }}{\prod_{i=1}^r (1-e^{\langle \alpha_i,z\rangle})}.$$
\end{proposition}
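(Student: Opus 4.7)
The plan is to evaluate the defining series for $\AV^\Lambda(f_\sigma)(z,v)$ in coordinates adapted to $\sigma$: first in the base case $\Lambda = \Lambda_\sigma$, where the sum factors into one-dimensional Bernoulli sums covered by Example \ref{ex:dim1Eis}, and then lifting to general $\Lambda$ via orthogonality of characters on the finite group $\Lambda/\Lambda_\sigma$. Let $\{\alpha_1^*,\ldots,\alpha_r^*\}\subset U$ be the basis dual to $\sigma$, set $z_i := \langle\alpha_i,z\rangle$, and write $v = \sum_i v_i\alpha_i$; then $f_\sigma(z) = (z_1\cdots z_r)^{-1}$.

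If $\Lambda = \Lambda_\sigma$, then $\Gamma_\sigma = \bigoplus_i \Z\alpha_i^*$, the series factors along coordinates, and Example \ref{ex:dim1Eis} yields
$$\AV^{\Lambda_\sigma}(f_\sigma)(z,v) \;=\; \prod_{i=1}^r\sum_{n\in\Z}\frac{e^{v_i(2i\pi n - z_i)}}{2i\pi n - z_i} \;=\; \prod_{i=1}^r\frac{e^{-[v_i]z_i}}{1-e^{z_i}}.$$
In this case $\Trans(\tau,\sigma) = \{\lambda_\sigma\}$ with $\lambda_\sigma := -\sum_i [v_i]\alpha_i$ the unique translate of $v$ into $Q_\sigma$, and the numerator equals $e^{\langle\lambda_\sigma,z\rangle}$, so the proposition holds in this case.

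For general $\Lambda\supset\Lambda_\sigma$, the dual inclusion $\Gamma = \Lambda^*\subset\Gamma_\sigma$ has index $|\Lambda/\Lambda_\sigma|$, and orthogonality of characters on $\Lambda/\Lambda_\sigma$ gives
$$\mathbf{1}_\Gamma(\gamma) \;=\; \frac{1}{|\Lambda/\Lambda_\sigma|}\sum_{\bar\lambda\in\Lambda/\Lambda_\sigma} e^{2i\pi\langle\lambda,\gamma\rangle}\qquad(\gamma\in\Gamma_\sigma).$$
Inserting this into the $L^2$-series defining $\AV^\Lambda(f_\sigma)(z,v)$, exchanging summations, and using $e^{2i\pi\langle\lambda,\gamma\rangle}\,e^{\langle v,2i\pi\gamma-z\rangle} = e^{\langle\lambda,z\rangle}\,e^{\langle v+\lambda,2i\pi\gamma-z\rangle}$, one obtains
$$\AV^\Lambda(f_\sigma)(z,v) \;=\; \frac{1}{|\Lambda/\Lambda_\sigma|}\sum_{\bar\lambda\in\Lambda/\Lambda_\sigma} e^{\langle\lambda,z\rangle}\,\AV^{\Lambda_\sigma}(f_\sigma)(z,v+\lambda).$$
Applying the base case to $v+\lambda$, with $\mu(\lambda)\in\Lambda_\sigma$ the unique element sending $v+\lambda$ into $Q_\sigma$, each inner term equals $e^{\langle\mu(\lambda),z\rangle}/\prod_i(1-e^{z_i})$. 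Setting $\tilde\lambda := \lambda+\mu(\lambda)\in\Lambda$, one has $v+\tilde\lambda\in Q_\sigma$ and thus $\tilde\lambda\in\Trans(\tau,\sigma)$; the prefactor combines to $e^{\langle\tilde\lambda,z\rangle}$; and $\bar\lambda\mapsto\tilde\lambda$ is a bijection $\Lambda/\Lambda_\sigma\xrightarrow{\sim}\Trans(\tau,\sigma)$, yielding the stated formula.

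The main bookkeeping obstacle is keeping track of the twist $e^{\langle\lambda,z\rangle}$ forced in by character orthogonality (which reflects the fact that $Q_\sigma$ is not centered at the origin) and combining it with $e^{\langle\mu(\lambda),z\rangle}$ to convert the sum over $\Lambda/\Lambda_\sigma$ into a sum over $\Trans(\tau,\sigma)$. Convergence is not a serious issue: $f_\sigma(2i\pi\gamma-z) = O(|\gamma|^{-r})$, so all series involved converge in $L^2(V/\Lambda)$, and the pointwise equality on $V_{reg}$ then follows from the local constancy of the right-hand side.
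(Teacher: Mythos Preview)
Your proof is correct and follows essentially the same two-step strategy as the paper: first handle the case $\Lambda=\Lambda_\sigma$ by factoring into one-dimensional sums and invoking Example~\ref{ex:dim1Eis}, then pass to a general $\Lambda\supset\Lambda_\sigma$ by averaging over $\Lambda/\Lambda_\sigma$. The paper phrases the second step as ``use Lemma~\ref{lem: first compare} and the covariance formula~(\ref{eq:covariance})''; your character-orthogonality computation is precisely the content of the proof of Lemma~\ref{lem: first compare}, and the twist $e^{\langle\lambda,z\rangle}$ you track is exactly what the covariance relation supplies. Your explicit bijection $\bar\lambda\mapsto\tilde\lambda=\lambda+\mu(\lambda)$ onto $\Trans(\tau,\sigma)$ makes transparent a step the paper leaves to the reader.
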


\begin{proof}
If $\Lambda=\Lambda_\sigma$, the formula reduces to the one  dimensional case. Otherwise, we use Lemma
\ref{lem: first compare} and the covariance formula \ref{eq:covariance}.
\end{proof}

The dependance of $\AV^{\Lambda}(f_\sigma)(z,v)$ on $v$ is only via the tope $\tau$ where $v$ belongs.
Thus we see that, for any $f\in \CS_\CH$, the function $v\mapsto \AV^{\Lambda}(f)(z,v)$ is  a locally constant function on $V_{reg}$ with value in $\CM_\CH$.

\begin{example}
We return to the Example \ref{ex:two}, where $\Phi^{eq}=\{e_1,e_2,e_1+e_2\}$.
To describe the function $v\mapsto \AV^{\Lambda}(f)(z,v)$ on $V_{reg}$ completely, it suffices
to give its expression on each tope $\tau_1$ and $\tau_2$ which are depicted in Figure \ref{a2}.  This is true since any
element of $V_{reg}$ can be translated to $\tau_1$ or $\tau_2$ by an element of $\Lambda$, and then one can use the
covariance relation (\ref{eq:covariance}).

Choose $\sigma=\{e_1,e_1+e_2\}$ a basis of $\Phi^{eq}$.
Write $z=z_1e^1+z_2 e^2$ in the dual space, then
$f_\sigma(z)=\frac{1}{z_1(z_1+z_2)}$ is in $S_{\CH}$.

For   $v\in \tau_1$
$$\AV^{\Lambda}(f_\sigma)(z,v)=\frac{e^{z_1}}{(1-e^{z_1})(1-e^{z_1+z_2})},$$
while if $v\in \tau_2$
$$\AV^{\Lambda}(f_\sigma)(z,v)=\frac{1}{(1-e^{z_1})(1-e^{z_1+z_2})}.$$
\end{example}

\bigskip
\bigskip
%
%

For $v\in V_{reg}$, denote by
$\SZ^{\Lambda}(v): \CS_{\mathcal H}\to \mathcal M_{\mathcal H}$
the map
 $$(\SZ^{\Lambda}(v)f)(z)=\AV^{\Lambda}(f)(z,v).$$

This operator is locally constant.
We denote its value on $\tau$ by $\SZ^{\Lambda}(\tau)$:
$$(\SZ^{\Lambda}(\tau)f)(z)=\AV^{\Lambda}(f)(z,v)$$
for any choice of $v\in \tau$.

\bigskip

We now define a  piecewise polynomial function of $v$ associated to a  function $g(z)$ in $\CR_\CH$.

First, the operator  on  $\CM_\CH$   given by  multiplication by a function $h(z)$,  that is $f(z)\mapsto h(z)f(z)$,  is
 simply denoted  by $f\mapsto hf$.

If $v\in V$ and  $g\in \CR_{\mathcal H}$, then $g_v(z)=g(z)e^{\ll z,v\rr}$ is a function in $\CM_\CH$ depending on $v$.

Let $v\in V_{reg}$. Consider the map $\CS_\CH\to \CM_\CH$ which associates to
$f\in \CS_\CH$ the function $g(z)e^{\ll z,v\rr}(\SZ^{\Lambda}(v)f)(z)$. We project back this function on $\CS_\CH$ using the projector
${\bf R}$. Thus the map
\begin{equation}\label{Sz}
f(z)\mapsto  {\bf R}\left(g(z)e^{\ll z,v\rr}
(\SZ^{\Lambda}(v)f)(z)\right)
\end{equation}
is a map from $\CS_\CH$ to $\CS_\CH$ depending on $v$.
As $\CS_\CH$ is finite dimensional, we can take the trace of this operator, and thus obtain a function of $v\in V_{reg}$.
Let us record this definition.

\begin{definition}\label{def:sz}
Let $g\in \CR_{\CH}$.
Define  the function $P(\CH,\Lambda,g)$  on $V_{reg}(\CH,\Lambda)$ by $$P(\CH,\Lambda,g)(v):=Tr_{\CS_\CH} \left({\bf R}\, g_v \,\SZ^{\Lambda}(v)\right).$$
\end{definition}
Let us see that $P(\CH,\Lambda,g)(v)$ is a polynomial function of $v$ on each tope $\tau$.
Indeed, to compute $P(\CH,\Lambda,g)(v)$ using (\ref{Sz}), we have to compute the total residue of  functions
$g(z)e^{\ll z,v\rr}\AV^{\Lambda}(f_i)(z,v)$ with $f_i$ varying over a basis of $\CS_\CH$.
If $v\in \tau$, then  $\AV^{\Lambda}(f_i)(z,v)=\SZ^{\Lambda}(\tau)f_i(z)$  is constant in $v$. So when $v$ stays in a
tope $\tau$,  the dependance of $g(z)e^{\ll z,v\rr}\AV^{\Lambda}(f_i)(z,v)=g(z)e^{\ll z,v\rr}(\SZ^{\Lambda}(\tau)f_i)(z)$
on $v$ is via  $e^{\ll z,v\rr}$, and the map $\bf{R}$ involves only the Taylor series of this function  up to some order.

\bigskip

Thus we have associated to $g\in \CR_\CH$ (and $\Lambda$) a piecewise polynomial function
$P(\CH,\Lambda,g)$   on $V_{reg}.$

%
%

It is easy to compare piecewise polynomial functions
$P(\CH,\Lambda,g)$   associated to different lattices.

Let $\Lambda^1\subset \Lambda^2$,  then   $V_{reg}(\CH,\Lambda^2)\subset V_{reg}(\CH,\Lambda^1)$.

\begin{lemma}\label{lem compareP }
If $\Lambda^1\subset \Lambda^2$,  then
\begin{equation}\label{eq:SZlattices}
P(\mathcal H, \Lambda^2,g)(v)=|\Lambda^2/\Lambda^1|^{-1}
\sum_{\lambda\in \Lambda^2/\Lambda^1}P(\mathcal H, \Lambda^1,g)(v+\lambda).
\end{equation}
\end{lemma}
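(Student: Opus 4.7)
The plan is to reduce the identity to Lemma \ref{lem: first compare} by exploiting the fact that in the trace formula
$$P(\CH,\Lambda,g)(v) = Tr_{\CS_\CH}\bigl({\bf R}\,g_v\,\SZ^{\Lambda}(v)\bigr)$$
both the projector ${\bf R}$ and the trace on $\CS_\CH$ are $\Lambda$-independent. Thus it suffices to establish the operator identity
$$g_v\,\SZ^{\Lambda^2}(v) \;=\; |\Lambda^2/\Lambda^1|^{-1}\sum_{\lambda\in \Lambda^2/\Lambda^1} g_{v+\lambda}\,\SZ^{\Lambda^1}(v+\lambda)$$
as maps from $\CS_\CH$ into $\CM_\CH$; applying ${\bf R}$ and taking traces then yields the claimed formula for $P(\CH,\Lambda^2,g)(v)$.

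To prove this operator identity, I would evaluate both sides on $f \in \CS_\CH$. The key observation is that the factor $e^{-\ll v,z\rr}$ built into $\AV^{\Lambda}(f)(z,v)$ cancels exactly against the $e^{\ll z,v\rr}$ in $g_v(z) = g(z)e^{\ll z,v\rr}$, so that for any lattice $\Lambda$ with dual $\Gamma$,
$$(g_v\,\SZ^{\Lambda}(v)f)(z) \;=\; g(z)\sum_{\gamma\in \Gamma} f(2i\pi\gamma-z)\, e^{\ll v,2i\pi\gamma\rr}.$$
Substituting this identity with $v$ replaced by $v+\lambda$ on the right-hand side of the proposed operator identity, exchanging the order of summation, and collecting the $\lambda$-dependence produces the character sum
$$|\Lambda^2/\Lambda^1|^{-1}\sum_{\lambda\in \Lambda^2/\Lambda^1} e^{\ll \lambda, 2i\pi\gamma\rr},$$
which equals $1$ if $\gamma\in \Gamma^2$ and $0$ otherwise by orthogonality of characters on the finite abelian group $\Lambda^2/\Lambda^1$. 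This collapses the double sum to $g(z)\sum_{\gamma\in \Gamma^2} f(2i\pi\gamma-z)\, e^{\ll v,2i\pi\gamma\rr}$, matching the left-hand side, exactly as in the proof of Lemma \ref{lem: first compare}.

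The only subtle point is that one must apply the averaging comparison to the full operator $g_v\,\SZ^{\Lambda}(v)$, not to $\SZ^{\Lambda}(v)$ alone: translating $v$ by $\lambda$ shifts the exponential inside $g_v$ as well, and this extra contribution is precisely what assembles into the character $e^{\ll\lambda,2i\pi\gamma\rr}$ needed for the orthogonality step. Once the cancellation of the $e^{\pm\ll z,v\rr}$ factors is noticed, no genuine obstacle remains, and the lemma follows by $\C$-linearity of ${\bf R}$ and of the trace.
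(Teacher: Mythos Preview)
Your argument is correct and is precisely the unpacking of the paper's one-line proof ``This follows immediately from Lemma~\ref{lem: first compare}'': you reproduce the orthogonality-of-characters computation underlying that lemma on the composite operator $g_v\,\SZ^{\Lambda}(v)$, then pass to the trace. Your remark that the cancellation of $e^{\pm\ll v,z\rr}$ is what makes the translation in $v$ line up correctly is exactly the point that justifies calling it ``immediate.''
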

This follows immediately from Lemma \ref{lem: first compare}.

\bigskip

Our next aim  is to  compute the piecewise polynomial function
$P(\CH,\Lambda,g)$ using residues. We need more definitions.

An ordered basis of $\Phi^{eq}$ is a sequence
$[\alpha_1,\alpha_2,\ldots,\alpha_r]$ of elements of $\Phi^{eq}$ such that the underlying set is in
$\mathfrak{B}(\Phi^{eq})$. We denote the set of ordered bases by ${\overrightarrow{\mathfrak{B}}}(\Phi^{eq})$.

Let  ${\overrightarrow{\sigma}}=[\alpha_1,\alpha_2,\ldots,\alpha_r] \in {\overrightarrow{\mathfrak{B}}}(\Phi^{eq})$.
Then, to this data, one associates an iterated residue
functional $\Res^{{\overrightarrow{\sigma}}}$ on $\CR_{\mathcal H}$  as follows. For $z\in U$,
let $z_j=\ll z,\alpha_j\rr$. Then a function $f$ in $\CR_{\mathcal H}$ can be expressed as a function
$f(z_1,z_2,\ldots, z_r).$
We define
$$\Res^{{\overrightarrow{\sigma}}}(f):=\Res_{z_1=0}(\Res_{z_2=0}\cdots(\Res_{z_r=0}f(z_1,z_2,\ldots,z_r))\cdots).$$
Clearly $\Res^{{\overrightarrow{\sigma}}}(f_\sigma)=1.$

The functional $\Res^{{\overrightarrow{\sigma}}}$ factors through the canonical projection ${\bf R}:\CR_\mathcal H \to
\CS_\mathcal H$:
$\Res^{{\overrightarrow{\sigma}}}=\Res^{{\overrightarrow{\sigma}}} {\bf R}$.

\begin{definition}
A {\it diagonal subset of} ${\overrightarrow{\mathfrak{B}}}(\Phi^{eq})$  is a subset $\overrightarrow{\mathcal  D}$ of
${\overrightarrow{\mathfrak{B}}}(\Phi^{eq})$
such that the set of simple fractions  $f_\sigma$, ${\overrightarrow{\sigma}}\in \overrightarrow{\mathcal  D}$, forms a basis
of $\CS_{\mathcal H}$: $$\CS_{\mathcal H}=\oplus_{{\overrightarrow{\sigma}}\in \overrightarrow{\mathcal  D}} \C f_\sigma$$
and the dual basis to the basis $\{f_\sigma, {\overrightarrow{\sigma}} \in \overrightarrow{\mathcal  D}\}$ of $\CS_{\mathcal H}$ is the set of
linear forms $\Res^{{\overrightarrow{\sigma}}}$, that is, $\Res^{\overrightarrow{\tau}}(f_\sigma)=\delta_\sigma^\tau$,
for ${\overrightarrow{\sigma}},\overrightarrow{\tau} \in \overrightarrow{\mathcal  D}$.
\end{definition}

A total order  on $\Phi^{eq}$ allows us to construct the set  of Orlik-Solomon bases (see \cite{brver1}), which provides
diagonal basis of $\CS_{\CH}$. However we will also use some other diagonal subsets.

If $B:\CS_{\mathcal H}\to \mathcal
\CM_{\mathcal H}$ is an operator, the trace of the operator $A:={\bf R}B$ is thus $$Tr(A):=\sum_{{\overrightarrow{\sigma}}\in \overrightarrow{\mathcal  D}} \Res^{{\overrightarrow{\sigma}} }B f_\sigma.$$

\begin{definition}\label{def:localpoly}
Let $g\in \CR_\CH$ and $\tau$ a connected component of $V_{reg}$.
We denote by
$P(\CH,\Lambda,g,\tau)(v)$
the
polynomial function on $V$  such that
$$P(\CH,\Lambda,g)(v)=P(\CH,\Lambda,g,\tau)(v)$$
for  $v\in \tau$.
\end{definition}
Hence, we may give a more explicit formula for the polynomial $P(\CH,\Lambda,g,\tau)(v)$ using a set $\overrightarrow{\mathcal  D}$.

\begin{proposition}\label{prop:iteres}
Let $g\in \CM_{\CH}$. Let $\tau\in \CT(\CH,\Lambda)$
be a tope.
Let $\overrightarrow{\mathcal  D}$ be a diagonal subset of ${\overrightarrow{\CB}}(\Phi^{eq})$.
Then
$$P(\CH,\Lambda,g,\tau)(v)=\sum_{\overrightarrow{\sigma} \in \overrightarrow{\mathcal  D}} \Res^{{\overrightarrow{\sigma}}} \Big( e^{\ll z,v\rr}g(z)  \SZ^{\Lambda}(\tau)(f_\sigma)(z)\Big).$$

\end{proposition}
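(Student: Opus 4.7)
The plan is to unfold Definition \ref{def:sz} and evaluate the trace using the dual bases provided by a diagonal subset $\overrightarrow{\mathcal D}$. Fix a tope $\tau$ and an element $v\in\tau$. By construction, the operator $\SZ^{\Lambda}(v):\CS_\CH\to\CM_\CH$ depends on $v$ only through the tope containing $v$ (this is exactly the content of Proposition \ref{pro:formulaEIS} combined with the remark that follows), so on $\tau$ it equals the constant operator $\SZ^{\Lambda}(\tau)$. Consequently
$$P(\CH,\Lambda,g)(v)=Tr_{\CS_\CH}\bigl({\bf R}\circ g_v\circ \SZ^{\Lambda}(\tau)\bigr),$$
where $g_v$ denotes multiplication by $g(z)e^{\langle z,v\rangle}\in\CM_\CH$, and the composition makes sense as an endomorphism of the finite-dimensional space $\CS_\CH$ thanks to the projector $\mathbf{R}$. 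As explained just after Definition \ref{def:sz}, this trace depends polynomially on $v$ when $v$ varies in $\tau$, so it coincides with the polynomial $P(\CH,\Lambda,g,\tau)(v)$ of Definition \ref{def:localpoly}.

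Next, I would invoke the defining property of a diagonal subset $\overrightarrow{\mathcal D}$: the family $\{f_\sigma\mid \overrightarrow\sigma\in\overrightarrow{\mathcal D}\}$ is a basis of $\CS_\CH$ whose dual basis (under the pairing $\CS_\CH^*\times\CS_\CH\to\C$) is $\{\Res^{\overrightarrow\sigma}\}$. The general trace identity for an endomorphism $A$ of a finite-dimensional space expressed in a basis/dual basis pair yields
$$Tr_{\CS_\CH}\bigl({\bf R}\circ g_v\circ\SZ^{\Lambda}(\tau)\bigr)=\sum_{\overrightarrow\sigma\in\overrightarrow{\mathcal D}}\Res^{\overrightarrow\sigma}\Bigl({\bf R}\bigl(g(z)\,e^{\langle z,v\rangle}\,\SZ^{\Lambda}(\tau)(f_\sigma)(z)\bigr)\Bigr).$$

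Finally, I would eliminate the projector using the factorization $\Res^{\overrightarrow\sigma}=\Res^{\overrightarrow\sigma}\circ{\bf R}$ recorded right before the definition of a diagonal subset. This yields
$$P(\CH,\Lambda,g,\tau)(v)=\sum_{\overrightarrow\sigma\in\overrightarrow{\mathcal D}}\Res^{\overrightarrow\sigma}\bigl(e^{\langle z,v\rangle}g(z)\,\SZ^{\Lambda}(\tau)(f_\sigma)(z)\bigr),$$
which is the claimed formula. There is no real obstacle here: the statement is essentially a bookkeeping application of the trace-in-a-basis formula. The only technical point worth flagging is ensuring that $\Res^{\overrightarrow\sigma}$ makes sense on the function $g(z)e^{\langle z,v\rangle}\SZ^{\Lambda}(\tau)(f_\sigma)(z)$, which lies in $\CM_\CH$ rather than $\CR_\CH$; this is handled by the same truncation trick used to extend $\mathbf R$ from $\CR_\CH$ to $\hat{\CR}_\CH$ in Definition \ref{def:mh}, namely replacing the holomorphic Taylor factor by its expansion up to the total degree needed for the residue, after which the equality $\Res^{\overrightarrow\sigma}=\Res^{\overrightarrow\sigma}\circ\mathbf R$ applies verbatim.
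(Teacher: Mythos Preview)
Your proof is correct and follows exactly the route the paper takes: the paper records the trace identity $Tr({\bf R}B)=\sum_{\overrightarrow\sigma\in\overrightarrow{\mathcal D}}\Res^{\overrightarrow\sigma}(Bf_\sigma)$ just before the proposition and presents the statement as its immediate specialization to $B=g_v\,\SZ^{\Lambda}(\tau)$, without a separate proof. Your additional remark about extending $\Res^{\overrightarrow\sigma}$ to $\CM_\CH$ via truncation is a welcome clarification the paper leaves implicit.
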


Furthermore
$\SZ^{\Lambda}(\tau)(f_\sigma)(z)$  is given explicitly by
Proposition \ref{pro:formulaEIS}.
Thus, in principle, the above formula allows us to compute
$P(\CH,\Lambda,g)$.

\medskip

It is important to remark that the determination of a diagonal subset   $\overrightarrow{\mathcal  D}$ depends essentially
only on the system of hyperplanes $\CH$ and not on the choice of $\Phi^{eq}$.
The  difficulties in writing an algorithm for
$P(\CH,\Lambda,g)$ lies in the description of a diagonal subset $\overrightarrow{\mathcal  D}$, and also for each
$\sigma\in \overrightarrow{\mathcal  D}$, in the computation of $A^{\Lambda}(f_\sigma)$.
The difficulty of this  last computation depends on the lattice $\Lambda$.

\begin{definition}
Let $\Phi^{eq}\subset \Lambda$.
A basis $\sigma\in \mathfrak B(\Phi^{eq})$ is called unimodular (with respect to $\Lambda$) if  $\sigma$ is a basis of
the lattice $\Lambda$. A set $\Phi^{eq}$ is called unimodular,
if any basis $\sigma\in \mathfrak B(\Phi^{eq})$ is unimodular.
\end{definition}

\begin{example}
The set $$\Phi^{eq}=\{ e_1, e_2,(e_1+e_2), (e_1-e_2)\}$$  is contained in  $\Lambda=\Z e_1+\Z e_2$.
Then $\sigma=\{e_1+e_2,e_1-e_2\}$ belongs to $\mathfrak B(\Phi^{eq})$, and the index of $\Lambda_\sigma$ in
$\Lambda$ is $2$. So $\Phi^{eq}$ is not unimodular.
\end{example}

\begin{definition}
Let $\sigma=\{\alpha_1,\alpha_2,\ldots,\alpha_r\}$ be a  basis.
For $1\leq i \leq r$, the linear form    $v\to c_i^{\sigma}(v)$
is the coefficient of $v$ with respect to $\alpha_i$.
\end{definition}

Consider the function  $\{t\}=t-[t]$.
On each open interval $\tau=]n,n+1[$, the function $\{t\}$ coincides with  the linear function  $t\mapsto (t-n)$.

If $\sigma$ is a unimodular basis, we express $v= \sum_{i=1}^r c_i^{\sigma}(v)\alpha_i$.
Then $$v-\sum_{i=1}^r [c_i^{\sigma}(v)]\alpha_i=\sum_{i=1}^r\{c_i^{\sigma}(v)\}\alpha_i$$ is in $Q_\sigma$.
Thus the set $\Trans(\tau,\sigma)$ contains exactly the element $\lambda=-\sum_{i=1}^r [c_i^{\sigma}(v)]\alpha_i$ (which depends only on the tope $\tau$ where $v$ lies).

\begin{corollary}\label{cor:Step}
Let $\sigma$ be a unimodular basis in $\mathfrak B(\Phi^{eq})$. Let $v\in \tau$, and $\lambda=-\sum_{i=1}^r [c_i^{\sigma}(v)]\alpha_i$.  Then
$$\AV^{\Lambda}(f_\sigma)(z,v)= \frac{e^{\langle \lambda,z\rangle }}{\prod_{i=1}^r (1-e^{\langle \alpha_i,z\rangle})}.$$
\end{corollary}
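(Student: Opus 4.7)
The plan is to invoke Proposition~\ref{pro:formulaEIS} and show that under the unimodularity hypothesis the sum over $\Trans(\tau,\sigma)$ collapses to a single term, with the unique representative being precisely the $\lambda$ given in the statement.

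First, since $\sigma$ is unimodular, the lattice $\Lambda_\sigma$ generated by $\alpha_1,\ldots,\alpha_r$ equals $\Lambda$. Consequently $|\Lambda/\Lambda_\sigma|=1$ and the normalizing prefactor in Proposition~\ref{pro:formulaEIS} disappears. Moreover, recalling that $\Trans(\tau,\sigma)$ contains exactly $|\Lambda/\Lambda_\sigma|$ elements, it reduces to a single element, and the formula will follow as soon as we verify that this unique element is $\lambda=-\sum_{i=1}^r[c_i^\sigma(v)]\alpha_i$.

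To identify this element, I would expand $v$ in the basis $\sigma$ as $v=\sum_{i=1}^r c_i^\sigma(v)\,\alpha_i$ (unambiguously defined since $\sigma$ is a basis of $V$). Unimodularity implies that each $[c_i^\sigma(v)]\alpha_i$ lies in $\Lambda$, hence $\lambda=-\sum_{i=1}^r[c_i^\sigma(v)]\alpha_i\in\Lambda$. Then
\[
v+\lambda=\sum_{i=1}^r\bigl(c_i^\sigma(v)-[c_i^\sigma(v)]\bigr)\alpha_i=\sum_{i=1}^r\{c_i^\sigma(v)\}\alpha_i,
\]
with each $\{c_i^\sigma(v)\}\in[0,1)$, so $v+\lambda\in Q_\sigma=\oplus_{i=1}^r[0,1)\alpha_i$ by definition. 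Thus $\lambda\in\Trans(\tau,\sigma)$, and by the uniqueness observed above it is the only element there. Substituting this single term into the formula of Proposition~\ref{pro:formulaEIS} yields exactly the claimed expression.

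There is essentially no obstacle here: the argument is a direct unpacking of definitions once unimodularity forces $|\Lambda/\Lambda_\sigma|=1$. The only small subtlety worth noting is that regularity of $v$ guarantees $c_i^\sigma(v)\notin\Z$ for indices $i$ such that the corresponding hyperplane is among the admissible walls, but this is not needed for membership in $Q_\sigma=\oplus_{i=1}^r[0,1)\alpha_i$ since the half-open interval $[0,1)$ already accommodates the fractional parts.
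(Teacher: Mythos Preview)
Your proof is correct and follows essentially the same route as the paper: the paragraph immediately preceding the corollary already observes that unimodularity forces $\Lambda_\sigma=\Lambda$, identifies the unique element of $\Trans(\tau,\sigma)$ as $\lambda=-\sum_{i=1}^r[c_i^\sigma(v)]\alpha_i$ via the fractional-part computation, and then the corollary is read off from Proposition~\ref{pro:formulaEIS}. Your write-up simply makes these steps explicit.
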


\medskip

It may happen that even when the system $\Phi^{eq}$ is not unimodular for the lattice $\Lambda$, we can choose $\overrightarrow{\mathcal  D}$ to consist of unimodular bases.
In particular, using Proposition \ref{prop:iteres}, we can give an explicit algorithm for computing the piecewise polynomial function
$P(\CH,\Lambda,g)$ for classical root systems in the form of a step polynomial. Let us define what this means.

\begin{definition}
Let $\overrightarrow{\mathcal  D}$ be a subset of ${\overrightarrow{\CB}}(\Phi^{eq})$.
We denote by $Step(\overrightarrow{\mathcal  D})$ the algebra of functions on $V$ generated by the piecewise linear functions $v\to \{c_i^{\sigma}(v)\}$ with $\sigma$ running over
$\overrightarrow{\mathcal  D}$ and $1\leq i\leq r$.
An element of the algebra $Step(\overrightarrow{\mathcal  D})$ will be called a step polynomial (associated to $\overrightarrow{\mathcal  D}$).

\end{definition}

It is clear that a step polynomial is a periodic function on $V$,
which is expressed by a polynomial formula on each tope.

\begin{proposition}
Let $g\in {\mathcal G}_{\CH}$.
Assume that  $\overrightarrow{\mathcal  D}$ is a diagonal subset of ${\overrightarrow{\mathfrak B}}(\Phi^{eq})$ consisting of unimodular basis (with respect to  $\Lambda$).
Then  the piecewise polynomial function $P(\CH,\Lambda,g)$ belongs  to the algebra $Step(\overrightarrow{\mathcal  D})$.
\end{proposition}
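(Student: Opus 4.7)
The plan is to combine Proposition \ref{prop:iteres} with Corollary \ref{cor:Step}, and then unwind the iterated residue into coordinates adapted to the basis $\sigma$. Since $\overrightarrow{\mathcal D}$ is diagonal, for any tope $\tau$ we may write
$$P(\CH,\Lambda,g,\tau)(v)=\sum_{\overrightarrow{\sigma}\in \overrightarrow{\mathcal D}} \Res^{\overrightarrow{\sigma}}\Big( e^{\ll z,v\rr}\, g(z)\, \SZ^{\Lambda}(\tau)(f_\sigma)(z)\Big).$$
Because each $\sigma=\{\alpha_1,\ldots,\alpha_r\}\in \overrightarrow{\mathcal D}$ is unimodular, Corollary \ref{cor:Step} gives, for $v\in\tau$ and $\lambda_\sigma(v)=-\sum_i [c_i^\sigma(v)]\alpha_i$,
$$\SZ^{\Lambda}(\tau)(f_\sigma)(z)=\frac{e^{\ll \lambda_\sigma(v),z\rr}}{\prod_{i=1}^r (1-e^{\ll \alpha_i,z\rr})}.$$
Since $\sigma$ is a basis, $v=\sum_i c_i^\sigma(v)\alpha_i$, so the two exponentials combine as
$$e^{\ll z,v\rr}e^{\ll \lambda_\sigma(v),z\rr}=\prod_{i=1}^r e^{\{c_i^\sigma(v)\}\ll \alpha_i,z\rr}.$$

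Next I would change coordinates by $z_i:=\ll \alpha_i,z\rr$, in which the iterated residue $\Res^{\overrightarrow\sigma}$ becomes a sequence of ordinary one-variable residues at $z_r=0,\,z_{r-1}=0,\ldots$ In these coordinates the exponential-denominator factor decouples as $\prod_{i=1}^r \dfrac{e^{t_i z_i}}{1-e^{z_i}}$ with $t_i=\{c_i^\sigma(v)\}$, while by Lemma \ref{ind} the function $g\in \mathcal{G}_\CH$ can be rewritten as a finite $\C$-linear combination of elements $\theta(\sigma',\mathbf m)$ with $\sigma'$ a basis; each such $\theta(\sigma',\mathbf m)$ is a rational function in $(z_1,\ldots,z_r)$ whose denominator is a product of nonzero linear forms in the $z_i$. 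In particular the integrand is meromorphic of the form (rational in the $z_i$) $\cdot \prod_i e^{t_i z_i}/(1-e^{z_i})$, with a pole at each $z_i=0$ of some finite order.

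Now the key input is the Bernoulli polynomial generating series,
$$\frac{z\,e^{tz}}{e^z-1}=\sum_{k=0}^{\infty} B(k,t)\,\frac{z^k}{k!},$$
whose Taylor coefficients in $z$ are polynomials in $t$. Hence the Laurent expansion of $e^{t_i z_i}/(1-e^{z_i})$ at $z_i=0$ has coefficients that are polynomials in $t_i$. Each iterated one-variable residue extracts finitely many such Taylor coefficients, so the outcome of $\Res^{\overrightarrow\sigma}$ is a polynomial in $(t_1,\ldots,t_r)=(\{c_1^\sigma(v)\},\ldots,\{c_r^\sigma(v)\})$. Summing over $\overrightarrow\sigma\in\overrightarrow{\mathcal D}$ produces a polynomial in the finite set of functions $\{c_i^\sigma(v)\}$ with $\sigma\in\overrightarrow{\mathcal D}$ and $1\leq i\leq r$, which is by definition a step polynomial in $Step(\overrightarrow{\mathcal D})$.

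The main obstacle is the middle step: one must check that the extra factor $g(z)$, which may introduce additional poles away from $z=0$ in the new $z_i$-coordinates, does not spoil the polynomial-in-$t_i$ nature of the residue. This is handled by invoking Lemma \ref{ind} to reduce $g$ to a sum of basis-type fractions, after which all denominators factor through the $z_i$'s in a controlled way, and each one-dimensional residue produces only Bernoulli-type coefficients polynomial in the respective $t_i$.
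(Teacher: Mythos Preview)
Your proof is correct and follows the same starting point as the paper: combine Proposition~\ref{prop:iteres} with Corollary~\ref{cor:Step} to arrive at the global formula
\[
P(\CH,\Lambda,g)(v)=\sum_{\overrightarrow{\sigma}\in\overrightarrow{\mathcal D}}
\Res^{\overrightarrow{\sigma}}\Big(g(z)\,e^{\sum_i\{c_i^\sigma(v)\}\ll\alpha_i,z\rr}
\prod_{i=1}^r\frac{1}{1-e^{\ll\alpha_i,z\rr}}\Big),
\]
and then argue that the right-hand side is polynomial in the $t_i=\{c_i^\sigma(v)\}$.

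Where you diverge from the paper is in the justification of this last polynomiality claim. The paper dispatches it in one line: the parameters $t_i$ enter the integrand \emph{only} through the factor $e^{\sum_i t_i\ll\alpha_i,z\rr}$, and since $\Res^{\overrightarrow\sigma}$ factors through the total residue ${\bf R}$ (which reads off only the degree~$-r$ homogeneous component of an element of $\hat\CR_\CH$), it depends only on the Taylor expansion of that exponential up to some fixed order. Hence the output is automatically a polynomial in the~$t_i$, with no hypothesis on~$g$ beyond $g\in\CR_\CH$. Your detour through Lemma~\ref{ind} and the Bernoulli generating series is not wrong, but the ``obstacle'' you flag---that $g$ may introduce poles on hyperplanes other than $\{z_i=0\}$---is not actually an obstacle: those poles are carried along harmlessly by the iterated residue and have no bearing on the $t_i$-dependence, which lives entirely in the holomorphic numerator. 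So the decomposition of $g$ into basis-type fractions and the variable-by-variable Bernoulli analysis can simply be dropped.
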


\begin{proof}
This is clear, as we have the formula
\begin{equation}\label{steppoly}
P(\CH,\Lambda,g)(v)=\sum_{\overrightarrow{\sigma} \in \overrightarrow{\mathcal  D}} \Res^{{\overrightarrow{\sigma}}} g(z) e^{\sum_{i=1}^r\{c_i^{\sigma}(v)\}\ll\alpha_i,z\rr}\frac{1}{\prod_{i=1}^r (1-e^{\langle \alpha_i,z\rangle})},
\end{equation}
and the dependance on $v$ is through the Taylor expansion (in $z$) of $e^{\sum_{i=1}^r\{c_i^{\sigma}(v)\}\ll\alpha_i,z\rr}$ up to some order.
\end{proof}

\subsection{Multiple  Bernoulli series  }\label{section:mbs}

We return to our main object of study: the multiple Bernoulli series.

Let $V$, $\Lambda$ and $\CH$ be as before.
We denote by $\Gamma\subset U$ the dual lattice  to $\Lambda$, and by
$\Gamma_{reg}(\CH)$ the set $\Gamma \cap U_{\CH}$.
If $\gamma \in \Gamma_{reg}(\CH)$, a function $g$ in $\CR_\CH$ is defined on $2i\pi \gamma$.

\begin{definition}
If $g\in \CR_{\mathcal H}$,  the generalized function  $\mathcal{B}(\mathcal H, \Lambda,g)(v)$ on $V$  is defined by
$$\mathcal{B}(\mathcal H, \Lambda,g)(v)=\sum_{\gamma \in \Gamma_{\reg}(\mathcal H)} g(2i\pi \gamma) e^{ 2i\pi \ll v ,\gamma \rr}.$$
\end{definition}

The above series converges in the space of generalized functions on $V$.

We state some obvious properties of
$\mathcal{B}(\mathcal H, \Lambda,g)$.

\begin{lemma}\label{lem:secondcompare}
If $\Lambda^1\subset \Lambda^2$, then
\begin{equation}\label{eq:lattices}
\mathcal{B}(\mathcal H, \Lambda^2,g)(v)=|\Lambda^2/\Lambda^1|^{-1}
\sum_{\lambda\in \Lambda^2/\Lambda^1}\mathcal{B}(\mathcal H, \Lambda^1,g)(v+\lambda).
\end{equation}
\end{lemma}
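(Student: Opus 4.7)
The plan is to mirror the proof of Lemma \ref{lem: first compare} essentially verbatim, since the structure of the two statements is identical: both express an average over the finer lattice $\Lambda^1$ of translates as equal (up to the index) to the value for the coarser lattice $\Lambda^2$. The key input is the duality $\Lambda^1 \subset \Lambda^2 \iff \Gamma^2 \subset \Gamma^1$ together with orthogonality of characters on the finite group $\Lambda^2/\Lambda^1$.

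Concretely, I would start by unfolding the right-hand side:
\[
\sum_{\lambda \in \Lambda^2/\Lambda^1} \mathcal{B}(\mathcal{H}, \Lambda^1, g)(v+\lambda)
= \sum_{\lambda \in \Lambda^2/\Lambda^1} \sum_{\gamma \in \Gamma^1_{\reg}(\mathcal{H})} g(2i\pi\gamma)\, e^{2i\pi \ll v+\lambda,\gamma\rr}.
\]
Since $\gamma \in \Gamma^1$ pairs integrally with $\Lambda^1$, the factor $e^{2i\pi\ll\lambda,\gamma\rr}$ depends only on the class of $\lambda$ in $\Lambda^2/\Lambda^1$, so the double sum is well-defined. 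Then I would interchange the order of summation (justified in the sense of generalized functions, as for Lemma \ref{lem: first compare}) and factor out $g(2i\pi\gamma)e^{2i\pi\ll v,\gamma\rr}$:
\[
\sum_{\gamma \in \Gamma^1_{\reg}(\mathcal{H})} g(2i\pi\gamma)\, e^{2i\pi\ll v,\gamma\rr} \Bigl(\sum_{\lambda \in \Lambda^2/\Lambda^1} e^{2i\pi\ll\lambda,\gamma\rr}\Bigr).
\]

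The next step is to evaluate the inner character sum by orthogonality on the finite abelian group $\Lambda^2/\Lambda^1$: the character $\lambda \mapsto e^{2i\pi\ll\lambda,\gamma\rr}$ is trivial exactly when $\ll\lambda,\gamma\rr \in \Z$ for all $\lambda \in \Lambda^2$, i.e.\ when $\gamma \in \Gamma^2$. Thus the inner sum equals $|\Lambda^2/\Lambda^1|$ if $\gamma \in \Gamma^2$ and vanishes otherwise. Substituting, the outer sum collapses to $\Gamma^2_{\reg}(\mathcal{H}) = \Gamma^2 \cap U_\mathcal{H}$, giving
\[
|\Lambda^2/\Lambda^1| \sum_{\gamma \in \Gamma^2_{\reg}(\mathcal{H})} g(2i\pi\gamma)\, e^{2i\pi\ll v,\gamma\rr} = |\Lambda^2/\Lambda^1|\, \mathcal{B}(\mathcal{H}, \Lambda^2, g)(v),
\]
and dividing by $|\Lambda^2/\Lambda^1|$ yields the claimed identity.

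The only subtlety, and hence the main technical point to address, is the interchange of summations: the series defining $\mathcal{B}(\mathcal{H},\Lambda^1,g)$ converges only in the sense of generalized functions on $V/\Lambda^1$. However, since the outer sum over $\lambda \in \Lambda^2/\Lambda^1$ is \emph{finite}, the interchange is legitimate term by term as an equality of generalized functions. Alternatively, one can test against a smooth compactly supported function on $V$ and invoke Fubini for the resulting absolutely convergent sum, exactly as in the proof of Lemma \ref{lem: first compare}.
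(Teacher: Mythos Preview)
Your proof is correct and follows exactly the argument the paper uses for Lemma~\ref{lem: first compare}: expand the right-hand side, interchange the finite sum over $\lambda$ with the sum over $\gamma$, and apply orthogonality of characters on $\Lambda^2/\Lambda^1$ to collapse the inner sum. The paper itself does not write out a proof for this lemma, presenting it merely as one of the ``obvious properties'' of $\mathcal{B}(\mathcal H,\Lambda,g)$, so your write-up is more detailed than what appears there but identical in substance.
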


If we dilate a lattice $\Lambda$ by $\ell$, and if $g$ is homogeneous of degree $h$,
we clearly have
\begin{equation}\label{eq:dillattices}
\mathcal{B}(\mathcal H, \ell\Lambda
,g)(v)= \ell^{-h}
\mathcal{B}(\mathcal H, \Lambda,g)(\frac{v}{\ell}).
\end{equation}

With these two properties, we can compare
$\mathcal{B}(\mathcal H, \Lambda,g)$ over commensurable lattices.

\bigskip

\begin{definition}
A generalized function $b$  on $V$ will be called {\it piecewise
polynomial relative to $\CH$ and $\Lambda$} if

$\bullet$ the function $b$ is locally $L^1$,

$\bullet$ for each tope $\tau$ in $\mathcal{T}(\CH,\Lambda)$, there exists a polynomial
function $b^{\tau}$ on $V$ such that the restriction of $b$ to $\tau$
coincides with the restriction of  the polynomial $b^{\tau}$ to
$\tau$.
\end{definition}

As an $L^1$-function is entirely determined by its restriction to $V_{reg}$,  we will not distinguish between  piecewise
polynomial generalized functions on $V$ and piecewise polynomial functions on $V_{reg}$ as defined in the preceding section.


{\bf Be careful:}
the restriction to any tope $\tau$ of a piecewise polynomial generalized function  $b$ is  polynomial.
However, the converse is not true. For example the $\delta$ function of the lattice $\Lambda$ restricts to $0$ on
any tope $\tau$, but is not a piecewise polynomial generalized function, as it is not a locally $L^1$-function.

\bigskip

Recall the definition of ${\mathcal G}_\CH$ as given in Definition \ref{def:simple}.
If we multiply $g$ by a polynomial $p$, the function
$v\mapsto \mathcal{B}(\mathcal H, \Lambda,pg)(v)$ is obtained from  the function $\mathcal{B}(\mathcal H, \Lambda,g)(v)$ by differentiation (in the distribution sense).
Any function $f$ in $\CR_\CH$ is of the form $pg$, with $g\in
{\mathcal G}_\CH$. Thus we can reduce the computation of
$\mathcal{B}(\mathcal H, \Lambda,f)$ to the computation of
$\mathcal{B}(\mathcal H, \Lambda,g)$ for $g\in \mathcal G_\CH$.
Thus the following proposition follows  from calculations in dimension one,  Lemma \ref{ind} and the comparison formulae
on different lattices as given in Lemma \ref{lem:secondcompare}.

\begin{proposition}\label{prop:polyn}
If $f\in \CR_{\CH}$, the restriction to any tope $\tau$ of
$\mathcal{B}(\mathcal H,\Lambda,f)$ is given by a polynomial function.

Furthermore, if $f\in {\mathcal G}_\CH$,  the generalized function   $\mathcal{B}(\mathcal H,\Lambda,f)$ is a piecewise polynomial generalized function.
\end{proposition}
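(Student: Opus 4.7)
The strategy is a sequence of standard reductions that peel off the structural complications, ultimately reducing to the one-variable computation of a Bernoulli polynomial.

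First, I would reduce from general $f \in \CR_\CH$ to $f \in \CG_\CH$. As noted in the paragraph preceding the statement, any $f$ in $\CR_\CH$ can be written as $f(z) = p(z)\,g(z)$ with $p \in S(V)$ a polynomial and $g \in \CG_\CH$. Multiplication of the Fourier coefficient $g(2i\pi\gamma)$ by $p(2i\pi\gamma)$ corresponds, via $p(2i\pi\gamma)e^{2i\pi\langle v,\gamma\rangle} = p(\partial_v)e^{2i\pi\langle v,\gamma\rangle}$, to applying the constant-coefficient differential operator $p(\partial_v)$ to the generalized function $\mathcal{B}(\CH,\Lambda,g)$. Since $p(\partial_v)$ carries polynomials on any open tope to polynomials on the same tope, the first assertion of the proposition reduces to the case $f \in \CG_\CH$, which is exactly what the second assertion already assumes.

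Next, by Lemma \ref{ind}, every $g \in \CG_\CH$ is a finite $\C$-linear combination of ``simple'' functions $\theta(\sigma,\mathbf{m}) = 1/(\alpha_1^{m_1}\cdots\alpha_r^{m_r})$ with $\sigma=\{\alpha_1,\ldots,\alpha_r\}\subset \Phi^{eq}$ a basis of $V$. By linearity of $\mathcal{B}(\CH,\Lambda,\cdot)$, it suffices to treat such an $f$. Then I would invoke Lemma \ref{lem:secondcompare}: letting $\Lambda'\subset\Lambda$ be the sublattice generated by $\{\alpha_1,\ldots,\alpha_r\}$ (which has finite index since $\alpha_i\in\Lambda$),
$$\mathcal{B}(\CH,\Lambda,f)(v) = |\Lambda/\Lambda'|^{-1}\sum_{\lambda\in\Lambda/\Lambda'}\mathcal{B}(\CH,\Lambda',f)(v+\lambda).$$
A finite sum of translates of a piecewise polynomial generalized function is again piecewise polynomial on the common refinement of tope decompositions, so it suffices to handle $\mathcal{B}(\CH,\Lambda',f)$, i.e.\ the case where $\{\alpha_i\}$ is a $\Z$-basis of the lattice.

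In this unimodular setting, let $\{\alpha_i^\ast\}\subset U$ be the dual basis of $\Gamma'$, write $\gamma = \sum_i n_i\alpha_i^\ast$, and let $v_i = \langle\alpha_i^\ast,v\rangle$ denote the coordinates of $v$ along $\alpha_i$. If one were to sum over all $\gamma$ with $n_i\neq 0$ (ignoring the remaining hyperplane conditions from $\Phi^{eq}\setminus\{\alpha_i\}$), the series would factor into a product of one-variable Bernoulli series:
$$\prod_{i=1}^r\sum_{n_i\in\Z\setminus\{0\}}\frac{e^{2i\pi n_i v_i}}{(2i\pi n_i)^{m_i}} = \prod_{i=1}^r\left(-\frac{B(m_i,\{v_i\})}{m_i!}\right),$$
which is a piecewise polynomial function on $V/\Lambda'$. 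The actual sum over $\Gamma'_{\mathrm{reg}}(\CH)$ differs from this product by ``residual'' contributions indexed (via inclusion--exclusion) by subsets $S\subset\Phi^{eq}\setminus\{\alpha_i\}$, each summed over the proper sublattice $\Gamma'\cap\bigcap_{\phi\in S}\{\phi=0\}$. Viewed as Fourier series on sublattices of $\Gamma'$, these residual contributions are supported as generalized functions on the dual proper affine subspaces of $V$, so their restriction to any open tope is zero. This yields the first statement.

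For the second statement (piecewise polynomial \emph{generalized function}, i.e.\ local $L^1$-ness), the generating hypothesis on $\nu$ in $\theta(\nu,\mathbf{n})\in\CG_\CH$ is what prevents the residual contributions from being honest Dirac-type distributions on walls. Each residual sum, rewritten in coordinates adapted to the restricted sublattice, is itself of the form $\mathcal{B}(\CH'',\Lambda'',g'')$ with $g''\in\CG_{\CH''}$ in strictly lower dimension, so by induction on $r=\dim V$ it is a piecewise polynomial generalized function on the appropriate quotient and hence lifts to a locally $L^1$ function on $V$. The main obstacle is precisely this last step: verifying that the generating condition ``$\nu$ spans $V$'' propagates through every stratum of the inclusion--exclusion, so that the induction on dimension really closes and no singular (non-$L^1$) residual terms appear—this is the reason the hypothesis $f\in\CG_\CH$, rather than merely $f\in\CR_\CH$, is needed for the stronger conclusion.
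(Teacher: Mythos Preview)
Your overall strategy matches the paper's sketch exactly: reduce from $\CR_\CH$ to $\CG_\CH$ by differentiation, invoke Lemma~\ref{ind} to pass to simple fractions $\theta(\sigma,\mathbf m)$ with $\sigma$ a basis, then Lemma~\ref{lem:secondcompare} to reduce to the sublattice $\Lambda_\sigma$, and finally factor into one-variable Bernoulli series. The paper's proof is literally one sentence pointing to these three ingredients, so your elaboration is in the right spirit.

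There is, however, a genuine error in your handling of the ``residual'' terms. You claim that the contributions summed over the proper sublattices $\Gamma'\cap\bigcap_{\phi\in S}H_\phi$ are ``supported as generalized functions on the dual proper affine subspaces of $V$, so their restriction to any open tope is zero.'' This is false. A Fourier series over a proper sublattice $L\subsetneq\Gamma'$ with polynomially decaying coefficients defines a function on $V$ that is \emph{constant} along the annihilator of $L$ in $V$, not \emph{supported} there; it is typically nonzero on every tope. (Singular support on walls would require coefficients with no decay, as in $\sum_n e^{2i\pi n v}=\delta_{\Z}$.) So your argument for the first assertion, as written, does not close.

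The fix is precisely the induction you set up for the second assertion: each residual term is the pullback along $V\to V/\sum_{\phi\in S}\R\phi$ of a Bernoulli series $\mathcal B(\CH'',\Lambda'',g'')$ with $g''\in\CG_{\CH''}$ in strictly smaller dimension (the images of the $\alpha_i$ still span the quotient because $\sigma$ was a basis of $V$, and none of the $\alpha_i$ is proportional to any $\phi\in S$ since the hyperplanes of $\CH$ are distinct). By induction these pullbacks are piecewise polynomial generalized functions on the quotient, hence locally $L^1$ on $V$ and polynomial on each $(\CH,\Lambda)$-tope. Both assertions then follow at once. In short: drop the incorrect ``supported on proper affine subspaces'' sentence and let the dimensional induction from your second paragraph carry both statements.
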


Let us emphasize on the subtle difference between  the conditions  $f\in \CR_{\CH}$, or $f\in {\mathcal G}_\CH$.
Consider $f=1$ in the one dimensional space $U$ and
$\mathcal H=\{0\}$.  The function $f$ is not in ${\mathcal G}_{\CH}$.
Let $v\in V$.
Then  $$\mathcal{B}(\mathcal H,\Lambda,f)(v)=\sum_{n\neq 0}e^{2i\pi nv}=-1+\sum_{n\in \Z}e^{2i\pi nv}.$$
Thus  $\mathcal{B}(\mathcal H,\Lambda,f)(v)$ is the constant function  equal to $-1$ on any tope. However,
it has some singular part $\delta_{\Z}$, and is not locally $L^1$.

In contrast, if $f=\displaystyle \frac{1}{z}$, the generalized function
$$\mathcal{B}(\mathcal H,\Lambda,f)(v)=\sum_{n\neq 0}\frac{e^{2i\pi nv}}{2i\pi n}$$ is locally $L^1$ and  equal to the piecewise polynomial
function $-B(1,\{v\})=1/2-\{v\}$ (see Figure \ref{berno}).

\begin{figure}
\begin{center}
  \includegraphics[width=93mm]{bern.mps}\\
  \caption{Graph of $\mathcal{B}(\{0\},\Z, 1/z)(v)= \frac{1}{2}-\{v\}$}\label{berno}
\end{center}
\end{figure}

\begin{definition}
Let $f\in \CR_{\CH}$.
Given a tope $\tau$ in $\mathcal T(\mathcal H,\Lambda)$, we denote by
$\mathcal{B}(\mathcal H,\Lambda,f,\tau)$
the polynomial function on $V$ which coincides with
$\mathcal{B}(\mathcal H,\Lambda,f)$ on the tope $\tau$.
\end{definition}

\begin{remark}
It is interesting to understand the space of polynomials generated by the polynomial functions
$b^\tau=\mathcal{B}(\mathcal H,\Lambda,f,\tau)$, when $\tau$ runs over the topes, and the wall crossing formula between $b^{\tau_1}$ and $b^{\tau_2}$ when
$\tau_1$ and $\tau_2$ are adjacent.  We addressed some aspects of these theoretical questions in \cite{bover2}.
\end{remark}

Consider  the piecewise polynomial function
$P(\CH,\Lambda,f)$ on $V_{reg}(\CH,\Lambda)$ as given in Definition \ref{def:sz}.

\begin{theorem}(Szenes)\label{theo:main}
Let $f\in \CR_{\CH}$.
On $V_{\reg}(\CH,\Lambda)$, we have the equality
 $$\CB(\CH,\Lambda,f)= P(\CH,\Lambda,f).$$
\end{theorem}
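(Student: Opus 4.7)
Working tope by tope, the idea is to combine the iterated-residue expression for $P(\CH,\Lambda,f,\tau)$ from Proposition \ref{prop:iteres} with the explicit sum-over-lattice definition of $\AV^{\Lambda}(f_\sigma)$. After exchanging the lattice sum with the iterated residue, the identity reduces to an algebraic reproducing formula for the $\CS_{\CH}$-component of $f$, which will follow from the diagonal property of $\overrightarrow{\mathcal D}$ together with homogeneity considerations.

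\textbf{Main calculation.} Choose a diagonal subset $\overrightarrow{\mathcal D}\subset\overrightarrow{\mathfrak B}(\Phi^{eq})$. Proposition \ref{prop:iteres} gives
$$P(\CH,\Lambda,f,\tau)(v)=\sum_{\overrightarrow{\sigma}\in\overrightarrow{\mathcal D}}\Res^{\overrightarrow\sigma}\Bigl(f(z)\,e^{\ll z,v\rr}\,\AV^{\Lambda}(f_\sigma)(z,v)\Bigr).$$
Substituting $\AV^{\Lambda}(f_\sigma)(z,v)=\sum_{\gamma\in\Gamma}f_\sigma(2i\pi\gamma-z)\,e^{\ll v,2i\pi\gamma-z\rr}$ cancels the factors $e^{\pm\ll z,v\rr}$. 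Interchanging the lattice sum with the iterated residue (valid in the sense of Fourier series of $\CM_{\CH}$-valued distributions on $V/\Lambda$, as in Example \ref{ex:dim1Eis}), I obtain
$$P(\CH,\Lambda,f,\tau)(v)=\sum_{\gamma\in\Gamma}\Bigl(\sum_{\overrightarrow{\sigma}\in\overrightarrow{\mathcal D}}\Res^{\overrightarrow\sigma}\bigl(f(z)\,f_\sigma(2i\pi\gamma-z)\bigr)\Bigr)e^{2i\pi\ll v,\gamma\rr}.$$

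\textbf{Reproducing identity and conclusion.} The crux is the inner sum. For $\gamma\in\Gamma_{\reg}(\CH)$, the map $z\mapsto f_\sigma(2i\pi\gamma-z)$ is holomorphic at $z=0$ with Taylor expansion whose constant term is $f_\sigma(2i\pi\gamma)$. Because $\CS_{\CH}$ is concentrated in homogeneous degree $-r$ and $\Res^{\overrightarrow\sigma}$ factors through ${\bf R}$, the only term contributing to $\Res^{\overrightarrow\sigma}(f(z)\,f_\sigma(2i\pi\gamma-z))$ is the one whose total $z$-degree is $-r$. For $f\in\CS_{\CH}$ this is precisely the constant Taylor coefficient, and the diagonal property $\Res^{\overrightarrow\tau}(f_\sigma)=\delta^\tau_\sigma$ collapses the sum over $\overrightarrow\sigma$ to $f(2i\pi\gamma)$. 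By Lemma \ref{ind} and linearity the same conclusion extends to all $f\in\CG_{\CH}$, higher-order Taylor terms precisely accounting for the higher-order-pole degrees. For $\gamma\notin\Gamma_{\reg}(\CH)$, the corresponding contributions are generalized functions supported on affine walls and so vanish on $V_{\reg}$; similarly, homogeneous components of $f$ of degree strictly above $-r$ contribute $0$ to the inner sum and only distributional support on affine walls to $\mathcal{B}(\CH,\Lambda,f)$. Assembling these facts gives, on $V_{\reg}$,
$$P(\CH,\Lambda,f)(v)=\sum_{\gamma\in\Gamma_{\reg}(\CH)}f(2i\pi\gamma)\,e^{2i\pi\ll v,\gamma\rr}=\mathcal{B}(\CH,\Lambda,f)(v).$$
The principal obstacle I foresee is the rigorous justification of the sum-residue interchange; this is cleanest after first reducing to $f\in\CG_{\CH}$ via Lemma \ref{ind}, where Proposition \ref{prop:polyn} ensures that $\mathcal{B}(\CH,\Lambda,f)$ is a genuine piecewise polynomial with no distributional correction, so that termwise manipulation of the Fourier expansion is unambiguous.
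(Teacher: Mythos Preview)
Your approach is genuinely different from the paper's. The paper proceeds by a deletion--contraction induction on the number of hyperplanes in $\CH$: one removes a hyperplane $H_\phi$ not appearing among the poles of $g$, uses the short exact sequence $0\to\CS_{\CH'}\to\CS_{\CH}\xrightarrow{\,\res_\phi\,}\CS_{\CH_0}\to 0$ together with a commutative diagram for the operators $A(v,g)$, and matches this with the elementary decomposition $\CB(\CH,\Lambda,g)=\CB(\CH',\Lambda,g)-p^*\CB(\CH_0,\Lambda_0,g_0)$. The base case reduces to the one-variable residue computation you also cite.

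Your direct ``reproducing kernel'' strategy is attractive, but as written it has two substantive gaps.

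\emph{First}, the central identity
$\sum_{\overrightarrow\sigma\in\overrightarrow{\mathcal D}}\Res^{\overrightarrow\sigma}\bigl(f(z)\,f_\sigma(2i\pi\gamma-z)\bigr)=f(2i\pi\gamma)$
for regular $\gamma$ is established only for $f\in\CS_\CH$. Invoking Lemma~\ref{ind} does not close the gap: partial fractions of an element of $\CG_\CH$ produce terms $\theta(\sigma,{\bf m})$ with higher multiplicities ${\bf m}$, which are again outside $\CS_\CH$, so no reduction to the case you have verified actually occurs. The identity does hold (your 1D heuristic and explicit $A_2$-type checks confirm this), but proving it in general is a genuine multidimensional residue statement; the most natural argument is precisely an induction on $\CH$ of deletion--contraction type, which is what the paper supplies.

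\emph{Second}, the claim that the terms with $\gamma\notin\Gamma_{\reg}(\CH)$ contribute only ``generalized functions supported on affine walls'' is false as stated: each such term is a smooth exponential $c_\gamma\,e^{2i\pi\langle v,\gamma\rangle}$, certainly not wall-supported. What is true (and again nontrivial) is that for $f\in\CG_\CH$ every individual coefficient $c_\gamma$ vanishes; but this requires proof, and showing it for $\gamma$ lying on a single hyperplane $H_\phi$ amounts, after restricting to $H_\phi$, to the same reproducing identity one level down. In other words, handling the non-regular $\gamma$ honestly forces exactly the inductive step the paper's proof carries out.

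In summary: the scaffolding is correct, but the two key facts you assert (the reproducing identity beyond $\CS_\CH$, and the vanishing of non-regular contributions) are not consequences of Lemma~\ref{ind} or wall-support considerations; they require an argument of roughly the same depth as the theorem itself, and the paper's deletion--contraction induction is one clean way to supply it.
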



For completeness, we give a proof of this theorem in the Appendix.

\bigskip

Our Maple program  computes, given data $\mathcal H,\, \Lambda, \, f$,  where $\CH$ is the hyperplane arrangement associated to a classical root system, a piecewise polynomial
function on $V$  in terms of step polynomials. Naturally, we can also evaluate this function at any point $v\in V_{reg}$.

\bigskip

We return to the definition of multiple Bernoulli series in the way  we introduced them in the introduction.

Let $V$ be a vector space with a lattice $\Lambda$ with dual lattice $\Gamma$.
We  considered  $\Phi$  in the introduction as a {\bf list} of elements of $\Lambda$.
We then introduced the following definition. Let
$\Gamma_{\reg}(\Phi)=\{\gamma\in \Gamma ; \ll \phi,\gamma \rr \neq
0\;\mbox{for all} \;\phi\in \Phi\}$ and defined

\begin{equation}
\mathcal{B}(\Phi, \Lambda)(v)=\sum_{\gamma \in \Gamma_{\reg}(\Phi)}  \frac{e^{\ll 2i\pi v ,\gamma \rr}}{
\prod_{\phi \in \Phi} \ll 2i\pi \phi,\gamma \rr}. \end{equation}

Consider $\CH=\cup_k\{\phi_k=0\}$
(some elements of the list $\Phi$ might define the same hyperplane)
and $g(z)=1/\prod_{\phi \in \Phi} \ll  \phi,z \rr$, then
$$\mathcal{B}(\Phi, \Lambda)(v)= \mathcal{B}(\mathcal H, \Lambda,g)(v).$$
We will also call the functions $\mathcal{B}(\mathcal H, \Lambda,g)(v)$
multiple Bernoulli series.

\medskip

\section{Classical root systems}\label{section:clroot}

Let $G$ be a simple, simply connected, compact Lie group of rank $r$
with maximal torus $T$.  We denote the Lie algebra of $T$ and $G$ by
$\frt$ and $\frg$ respectively. Then the complexification $\frh:=\frt_{\C}$ is a
Cartan subalgebra of $\frg_{\C}$.

For $\alpha \in \frh^*$, define
$(\frg_{\C})_{\alpha}=\{X \in \frg_{\C}; [H,X]=\ll \alpha,H\rr X\;\text{for all}\;H \in\frh\}$.
If $\alpha \neq 0$ and
$(\frg_{\C})_{\alpha}\neq 0$, then $\alpha$ is called a {\it root} of
$\frh$ in $\frg_{\C}$. Let $R=R(\frg_{\C},\frh)\subset \frh^*$ be
the set of roots; roots $\alpha \in R$ take imaginary values on $\frt$.
We denote the root lattice by $Q$ and its dual, the coweight lattice, by $\check{P}$.

For  $\alpha \in R$, there exists a unique element $H_{\alpha}$ in
$[(\frg_{\C})_{\alpha},(\frg_{\C})_{-\alpha}]$ satisfying $\ll
\alpha,H_\alpha \rr=2$; it is called the {\it coroot} associated to
the root $\alpha$.  For any $\alpha \in R$, $i H_{\alpha}$ is in $\frt$, and
for any $\alpha, \beta \in R$, $\beta(H_\alpha)$ is integral.
The lattice spanned by $H_{\alpha}$ is
called the coroot lattice and denoted by $\check{Q}$.

Define the weight lattice $P=\{ \lambda \in \frh^* ; \lambda(H_{\alpha}) \in
\mathbb{Z},\; \forall \alpha \in R\}$; it is the dual of the coroot lattice $\check{Q}$.
A {\it regular} weight $\lambda\in P^{reg}$ is such that  $ \lambda(H_{\alpha}) \neq 0$
for all $H_\alpha$.

We denote by
$\frh_\R:=\sum_\alpha \R H_\alpha$, the real span of coroots.  In this section we have $V=\mathfrak h_\R$, and
its dual $\frh_\R^*$ is denoted by $U$ as before. We follow the notation of Bourbaki for root data.

\subsection{Diagonal subsets}
To compute  multiple Bernoulli series associated to classical root systems we need to construct
explicit diagonal bases for the corresponding $\CS_{\mathcal H}$.
Such bases can be constructed by an algorithmic procedure, based on Orlik-Solomon construction.
However in some cases one can describe a diagonal subset $\overrightarrow{\mathcal  D}$ of ${\overrightarrow{\CB}}(\Phi^{eq})$
whose associated simple fractions form a basis for $\CS_{\mathcal H}$ in a direct way, and that we present now.

\subsubsection{The system  of type $A_r$}\label{A}

Let $n=r+1$.
We consider $\R^{n}$ with standard basis $\{e_1,e_2,\ldots,e_n\}$.
Let $$A_r:=[(e_i-e_j);\, 1\leq i< j\leq n]$$ be the root system of type $A$ and rank $r$.

Let $\{e^i\}$ be the dual basis to $\{e_i\}$ and
$$V=\{v=\sum_{i=1}^n v_i e^i;\, \sum_{i=1}^n v_i=0\}.$$

Let $z=\sum_{i=1}^n z^i e_i$ be in $U$ (hence
$\sum_{i=1}^n z^i=0$)
and let $\CH_r^A$ be the system of hyperplanes in $U$ given by $$\CH_r^A=\cup_{1\leq i<j\leq n}\{z^i- z^j=0\}.$$
We take the set
$$\Phi^{eq}(A_r)=\{e^i-e^j; 1\leq i<j\leq n\}$$ of positive coroots as equations of $\CH_r^A$.

\medskip

One way to find a  diagonal basis of $\CS_{\CH_r^A}$ is as follows.

Let $\Sigma=[e^1-e^2,e^2-e^3,\ldots,e^{r}-e^{r+1}]$ be the set of simple coroots.
For a permutation $w$, we denote by $\overrightarrow{\sigma}_w=[e^{w(i)}-e^{w(i+1)},\, i=1,\ldots,r]$.
Then $\overrightarrow{\sigma}_w$ is an ordered basis associated to $w$, and the corresponding
simple fraction is $f_w(z):=\frac{1}{\prod_{i=1}^r (z^{w(i)}-z^{w(i+1)})}$.

Let $W_r$ be the subset of the Weyl group   $\Sigma_{r+1}$ of permutations of $\{e^1,e^2,\ldots, e^{r+1}\}$  leaving the
last element $e^{r+1}=e^n$ fixed.
Recall the following result (see for example Baldoni-Vergne \cite{balver} for a proof).

\begin{proposition}\label{pro:OW}
The set $\overrightarrow{\mathcal D}_W$  consisting of  ordered bases $\overrightarrow{\sigma}_w$  for
$w\in W_r$ is a diagonal subset of ${\overrightarrow {\mathfrak B}}(\Phi^{eq}(A_r))$.
\end{proposition}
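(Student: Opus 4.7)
The notion of diagonal subset imposes two conditions: (a) the simple fractions $\{f_w : w \in W_r\}$ form a basis of $\CS_{\CH_r^A}$; and (b) the iterated residues are the dual basis, i.e., $\Res^{\overrightarrow{\sigma}_v}(f_w)=\delta_{v,w}$ for all $v,w\in W_r$. The plan is to verify (b) directly. Since $|W_r|=r!=\dim \CS_{\CH_r^A}$ (the product $1\cdot 2\cdots r$ of the exponents of type $A_r$), condition (a) then follows automatically from (b).

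The diagonal identity $\Res^{\overrightarrow{\sigma}_w}(f_w)=1$ is immediate from the definitions: in the coordinates $z_i:=\langle z,e^{w(i)}-e^{w(i+1)}\rangle$, one has $f_w=1/(z_1z_2\cdots z_r)$, whose iterated residue equals $1$.

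For the off-diagonal identity $\Res^{\overrightarrow{\sigma}_v}(f_w)=0$ with $v\neq w$, I would pass to the coordinates $y_j:=z^{v(j)}-z^{v(j+1)}$ attached to $\overrightarrow{\sigma}_v$. Letting $\ell_i$ denote the position of $w(i)$ in the sequence $v$ (so $v(\ell_i)=w(i)$, with $\ell_{r+1}=r+1$ since both $v$ and $w$ fix $n$), each factor of $f_w$ expands as $z^{w(i)}-z^{w(i+1)}=\varepsilon_i\sum_{j\in I_i}y_j$ for a sign $\varepsilon_i\in\{\pm 1\}$ and an interval $I_i\subset\{1,\ldots,r\}$ with endpoints $\min(\ell_i,\ell_{i+1})$ and $\max(\ell_i,\ell_{i+1})-1$. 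Thus $f_w$, in $y$-coordinates, becomes (up to an overall sign) a product of reciprocals of interval linear forms in $y_1,\ldots,y_r$. I would then induct on $r$: the innermost residue $\Res_{y_r=0}$ picks up contributions only from factors whose interval $I_i$ contains $r$; a short partial-fraction expansion in $y_r$ reduces the problem to an iterated residue in $y_1,\ldots,y_{r-1}$ for the $A_{r-1}$ arrangement obtained by identifying the symbol $v(r)$ with $n$. The inductive hypothesis then forces the answer to be $0$ unless the surviving reduced permutations coincide, which happens only when $w=v$.

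The main technical obstacle is the combinatorial bookkeeping: keeping track of the signs $\varepsilon_i$, isolating exactly which partial-fraction contributions at $y_r=0$ survive, and verifying that each resulting reduced residue genuinely corresponds to some $\Res^{\overrightarrow{\sigma}_{v'}}$ attached to a well-defined $v'\in W_{r-1}$ paired with the appropriately reduced permutation $w'$. An alternative route, likely closer to the argument in \cite{balver}, is to write down a unitriangular change of basis between $\{f_w\}_{w\in W_r}$ and the classical Orlik-Solomon basis for the braid arrangement; the duality of the latter with iterated residues is standard, and this avoids ad hoc residue computations at the cost of invoking the no-broken-circuit formalism.
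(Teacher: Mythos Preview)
The paper does not actually prove this proposition: it simply quotes the result and refers to \cite{balver} for a proof. So there is nothing to compare against except your own argument, which is on the right track but over-anticipates the difficulty of the inductive step.

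Your reduction is correct: since $|W_r|=r!$ equals $\dim \CS_{\CH_r^A}$, it suffices to check the duality relations $\Res^{\overrightarrow{\sigma}_v}(f_w)=\delta_{v,w}$. In the coordinates $y_j=z^{v(j)}-z^{v(j+1)}$ you introduce, write $\ell_i=v^{-1}(w(i))$, so that $\ell$ is a permutation of $\{1,\ldots,r\}$ with $\ell_{r+1}=r+1$. The $i$-th factor of $f_w$ becomes, up to sign, $\big(\sum_{j=\min(\ell_i,\ell_{i+1})}^{\max(\ell_i,\ell_{i+1})-1} y_j\big)^{-1}$. The key observation you are missing is that \emph{exactly one} factor involves $y_r$: the interval $I_i$ contains $r$ iff $\max(\ell_i,\ell_{i+1})=r+1$, and since $\ell_{r+1}$ is the unique index equal to $r+1$, this happens only for $i=r$. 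Hence no partial-fraction step is needed. The factor for $i=r$ is $(y_{\ell_r}+\cdots+y_r)^{-1}$, which has a pole at $y_r=0$ iff $\ell_r=r$, i.e.\ iff $w(r)=v(r)$; otherwise $\Res_{y_r=0}f_w=0$ already. When $w(r)=v(r)$, the residue at $y_r=0$ leaves exactly the product over $i\le r-1$, which is $f_{w'}$ in the $y_1,\ldots,y_{r-1}$ coordinates for the pair $(v',w')$ obtained by deleting the last position. Iterating, $\Res^{\overrightarrow{\sigma}_v}(f_w)\neq 0$ forces $\ell_i=i$ for all $i$, i.e.\ $w=v$, and then all signs are $+1$. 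So the ``combinatorial bookkeeping'' you flag as an obstacle essentially disappears once you notice that only a single factor carries the innermost variable at each stage.
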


We use the above basis in our Maple program.

\medskip
We now give another interesting diagonal subset.

\medskip

Consider  a sequence $ \overrightarrow{\sigma}=[\alpha_2,\alpha_3,\ldots,\alpha_n]$ where
$\alpha_i=e^i-e^j$ with $j<i$.
That is, $\alpha_2=e^2-e^1$, $\alpha_3=e^3-e^2$ or $e^3-e^1$, $\alpha_4=e^4-e^3$, or $e^4-e^2$, or $e^4-e^1$, etc.
Clearly,  $\overrightarrow{\sigma}$ is in ${\overrightarrow{\mathfrak B}}(\Phi^{eq}(A_r))$.
We call such $ \overrightarrow{\sigma}$ a {\textit flag basis}; there are $r!$ such sequences $\overrightarrow{\sigma}$.

\begin{lemma}
The set  $\overrightarrow{\mathcal  D}(A_r)$ consisting of flag bases is a diagonal subset of
${\overrightarrow{\mathfrak B}}(\Phi^{eq}(A_r))$.
\end{lemma}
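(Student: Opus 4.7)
The plan is to verify the two defining conditions for $\overrightarrow{\mathcal D}(A_r)$ to be diagonal: that the simple fractions $f_\sigma$ with $\overrightarrow{\sigma}\in \overrightarrow{\mathcal D}(A_r)$ form a basis of $\CS_{\CH_r^A}$, and that the dual basis is given by the iterated residues $\Res^{\overrightarrow{\sigma}}$. Since $|\overrightarrow{\mathcal D}(A_r)|=1\cdot 2 \cdots r = r!$ (one choice of $j(i)\in\{1,\ldots,i-1\}$ for each $i=2,\ldots,n$) agrees with the dimension of $\CS_{\CH_r^A}$ (given by Orlik--Solomon as the product of exponents $1,2,\ldots,r$), establishing the duality $\Res^{\overrightarrow{\tau}}(f_\sigma)=\delta_{\sigma,\tau}$ for flag bases will automatically give linear independence and thus the basis property.

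The core of the argument is a direct computation of the iterated residue, performed by induction on $r$. Write a flag basis as $\overrightarrow{\sigma}=[\alpha_2(\sigma),\ldots,\alpha_n(\sigma)]$ with $\alpha_i(\sigma)=e^i-e^{j_\sigma(i)}$, $j_\sigma(i)<i$, and correspondingly $f_\sigma(z)=\prod_{i=2}^{n}(z^i-z^{j_\sigma(i)})^{-1}$. Given another flag basis $\overrightarrow{\tau}$, I would compute $\Res^{\overrightarrow{\tau}}(f_\sigma)$ by first applying the innermost residue, which by the definition in the text is the residue with respect to the variable $w_n:=z^n-z^{j_\tau(n)}$. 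The decisive observation is that because $j_\sigma(i)<i$ for every $i$, the variable $z^n$ appears in $f_\sigma$ only in the single factor $(z^n-z^{j_\sigma(n)})^{-1}$. Hence $\Res_{w_n=0}f_\sigma$ vanishes unless $j_\sigma(n)=j_\tau(n)$, in which case the residue equals $\prod_{i=2}^{n-1}(z^i-z^{j_\sigma(i)})^{-1}$, the simple fraction of a flag basis for the $A_{r-1}$ subsystem spanned by $\{e^1,\ldots,e^{n-1}\}$. Applying the induction hypothesis to the remaining iterated residue $\Res_{w_2=0}\cdots\Res_{w_{n-1}=0}$ yields $\delta_{\sigma,\tau}$.

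The induction base $r=1$ is trivial, and the inductive step uses only the combinatorial fact that the flag condition $j(i)<i$ on both $\sigma$ and $\tau$ is stable under deleting the top index $n$; this is what lets the argument unravel cleanly one variable at a time. The main (mild) obstacle is bookkeeping: one must check that the passage from $\overrightarrow{\sigma}$ to its truncation is a flag basis of the $A_{r-1}$ subsystem whose equations are still $\{e^i-e^j : 1\le i<j\le n-1\}$, and that the iterated residue $\Res^{\overrightarrow{\tau}}$ does factor as the outer residues $\Res_{w_2=0}\cdots\Res_{w_{n-1}=0}$ composed with $\Res_{w_n=0}$. Both facts are immediate from the definition of flag basis and of iterated residue once one notes that no $z^n$ remains after the innermost step. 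With duality established, linear independence of the $r!$ fractions $f_\sigma$ follows; combined with $\dim\CS_{\CH_r^A}=r!$, we conclude that $\{f_\sigma : \overrightarrow{\sigma}\in\overrightarrow{\mathcal D}(A_r)\}$ is a basis of $\CS_{\CH_r^A}$ with $\{\Res^{\overrightarrow{\sigma}}\}$ as dual basis, completing the proof.
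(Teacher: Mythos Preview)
Your argument is correct and follows the same route as the paper: the paper's proof reduces everything to the duality relation $\Res^{\overrightarrow{\sigma}}f_\tau=\delta_{\sigma\tau}$ and declares it ``evident,'' while you spell out exactly why it is evident via the induction on $r$ (peeling off the top variable $z^n$, which appears in a single factor of $f_\sigma$) together with the dimension count $\dim\CS_{\CH_r^A}=r!$. Your added bookkeeping---that truncation of a flag basis is again a flag basis for $A_{r-1}$ and that the iterated residue factors accordingly---is precisely what makes the paper's one-line claim rigorous, and nothing further is needed.
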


We only need to prove that if $\overrightarrow{\sigma}$ and
$\overrightarrow{\tau}$ are two flag bases, then
$\Res^{\overrightarrow{\sigma}}f_\tau=0$ unless
$\overrightarrow{\sigma}=\overrightarrow{\tau}$. But this is evident.

\subsubsection{Systems of type $B_r$ or $C_r$}\label{B}
We consider $V=\R^r$ with standard basis $\{e^1,e^2,\ldots,e^r\}$.

Let $$B_r=[\pm e_i,\pm (e_i\pm e_j),\, 1\leq i \leq r,\,1\leq i<j\leq r]$$ be the root system of type $B$
and rank $r$.

Let
$$C_r=[\pm 2 e_i,\pm (e_i\pm e_j),\,  1\leq i \leq r,\,  1\leq i<j\leq r]$$
be the root system of type $C$ and rank $r$.

As roots of systems of type $B$ and $C$ are proportional, the system of hyperplanes  in $U={\mathfrak h}_\R^*$ defined by coroots of $B$ and $C$ are the same,
and we denote it by $\CH_r^{BC}$. More precisely, let $z=\sum_{i=1}^r z^i e_i$ in $U$, then the system of hyperplanes
$\CH_r^{BC}$   in $U$ is given by
$$\CH_r^{BC}=\cup_{1\leq i<j\leq r}\{z^i\pm z^j=0\}\cup \cup_{1\leq i\leq r} \{z^i=0\}.$$

We take the set
$$\Phi^{eq}(BC_r)=\cup_{1\leq i<j\leq r}\{e^i\pm e^j=0\}\cup \cup_{1\leq i\leq r} \{e^i=0\}$$
as equations of $\CH_r^{BC}$.

\bigskip

We define similarly a flag basis $\overrightarrow{\sigma}$ of $\Phi^{eq}(BC_r)$.
This is a basis of the form $\overrightarrow{\sigma}=[\alpha_1,\alpha_2,\ldots,\alpha_r]$ of $r$ elements of $\Phi^{eq}(BC_r)$  so that
$\alpha_i=e^i$,  or $e^i-e^j$  or $e^i+e^j$ with $j<i$.
That is, $\alpha_1=e^1$, $\alpha_2=e^2$ or $e^2-e^1$, or $e^2+e^1$,
 $\alpha_3=e^3$ or $e^3-e^2$, or $e^3+e^2$, or $e^3-e^1$, or $e^3+e^1$, etc.
Clearly, there are $(1)(3)(5)\cdots(2r-1)$ such sequences $\overrightarrow{\sigma}$.

\begin{lemma}
The set  $\overrightarrow{\mathcal  D}(BC_r)$ consisting of flag bases  is a diagonal subset of ${\overrightarrow{\mathfrak B}}(\Phi^{eq}(BC_r))$.
\end{lemma}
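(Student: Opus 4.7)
I aim to establish bi-orthogonality $\Res^{\overrightarrow{\tau}}(f_\sigma) = \delta_\sigma^\tau$ on $\overrightarrow{\mathcal D}(BC_r)$, which together with a dimension count will identify the flag bases as a diagonal subset. The diagonal case $\Res^{\overrightarrow{\sigma}}(f_\sigma) = 1$ is the general remark from Section~\ref{sub:hyper}, so the substance is off-diagonal vanishing. Fix $\overrightarrow{\tau} = [\beta_1, \ldots, \beta_r]$ and write each $\beta_i = e^i + \mu_i e^{j_i}$ with $\mu_i \in \{-1, 0, +1\}$ and $j_i < i$ (the convention $\mu_i = 0$ meaning $\beta_i = e^i$). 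Set $w_i := \langle z, \beta_i \rangle$. The resulting change of variables is unitriangular, so by induction $z^i = w_i + L_i(w_1, \ldots, w_{i-1})$ for some linear form $L_i$, and consequently any $\alpha = e^i + \nu e^k \in \Phi^{eq}(BC_r)$ with $k < i$ rewrites as $\alpha(z) = w_i + (\text{linear in } w_1, \ldots, w_{i-1})$.

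Now take $\overrightarrow{\sigma} = [\alpha_1, \ldots, \alpha_r] \neq \overrightarrow{\tau}$ and let $k$ be the largest index with $\alpha_k \neq \beta_k$. For $i > k$, $\alpha_i = \beta_i$ forces $\alpha_i(z) = w_i$, so the outer residues $\Res_{w_r=0}, \ldots, \Res_{w_{k+1}=0}$ strip these factors trivially. The problem reduces to evaluating
\[
\Res_{w_k=0} \frac{1}{\alpha_k(z) \prod_{i<k} \alpha_i(z)},
\]
in which only $\alpha_k(z)$ depends on $w_k$. Writing $\alpha_k = e^k + \nu e^{k_\alpha}$ and $\beta_k = e^k + \mu e^{j_\beta}$, a direct computation gives $\alpha_k(z) = w_k + s$ with $s = \nu z^{k_\alpha} - \mu z^{j_\beta}$, a linear form in $w_1, \ldots, w_{k-1}$. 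A short case analysis on $(\mu, \nu)$---trivially $s \neq 0$ when exactly one vanishes; when both are nonzero one splits on whether $j_\beta = k_\alpha$ and observes that $s = 0$ would force $\mu = \nu$ and $j_\beta = k_\alpha$, i.e.\ $\alpha_k = \beta_k$---shows $s \neq 0$. Hence $\Res_{w_k=0} \frac{1}{w_k + s} = 0$ (the pole sits at $w_k = -s \neq 0$), and the full iterated residue vanishes.

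Counting flag bases gives $|\overrightarrow{\mathcal D}(BC_r)| = \prod_{i=1}^r (2i-1) = (2r-1)!!$, which is the product of the exponents of $B_r$ (and of $C_r$, since the hyperplane arrangement $\CH_r^{BC}$ is the same), and hence equals $\dim \CS_{\CH_r^{BC}}$ by the Orlik-Solomon formula recalled in Section~\ref{sub:hyper}. Bi-orthogonality forces linear independence of the $f_\sigma$, and the dimension count then forces them to span $\CS_{\CH_r^{BC}}$, completing the proof. The main obstacle is the case analysis establishing $s \neq 0$; everything else is routine bookkeeping around the unitriangular change of coordinates.
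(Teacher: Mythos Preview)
Your proof is correct but takes a different route from the paper's. The paper establishes that the flag bases span $\CS_{\CH_r^{BC}}$ by an explicit induction on $r$, using three partial-fraction identities to rewrite any simple fraction so that at most one of $e^r$, $e^r+e^i$, $e^r-e^i$ appears in the denominator; after that step the induction hypothesis applies to the remaining $(r-1)$-variable fraction. The paper then dismisses bi-orthogonality as ``evident.'' You invert this: you spell out the bi-orthogonality (the paper's ``evident'' part) via the unitriangular change of variables and the case analysis on $s=\nu z^{k_\alpha}-\mu z^{j_\beta}$, and then invoke the Orlik--Solomon dimension formula $\dim\CS_{\CH_r^{BC}}=(2r-1)!!$ (cited in Section~\ref{sub:hyper}) to conclude spanning. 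Your approach is shorter and avoids the inductive reduction, at the cost of treating the dimension as an external input; the paper's approach is self-contained and actually yields explicit rewriting rules, which may be of independent computational use.
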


\begin{proof}
We first prove, by induction on $r$, that simple fractions $f_b$ associated to a flag basis $b$ generate $\CS_{\CH_r^{BC}}$.
We use the identities
$$\frac{1}{(x_r-x_i)}\frac{1}{(x_r+x_i)}=\frac{1}{(x_r+x_i)}\frac{1}{2x_r}+\frac{1}{(x_r-x_i)}\frac{1}{2x_r},$$
$$\frac{1}{x_r}\frac{1}{(x_r+x_i)}=-\frac{1}{(x_r+x_i)}\frac{1}{x_i}+\frac{1}{x_r}\frac{1}{x_i},$$
$$\frac{1}{x_r}\frac{1}{(x_r-x_i)}=\frac{1}{(x_r-x_i)}\frac{1}{x_i}-\frac{1}{x_r}\frac{1}{x_i},$$
to reduce to the case where a simple fraction
$f_b$ contains a linear form of type $e^r$, or $e^r+e^i$ or $e^r-e^i$ in the denominator, but not any two at the same time.
Then, by induction on $r$,
we see that a simple fraction $f_b$ associated to flag basis $b$ generates the space $\CS_{\CH_r^{BC}}$.
The dual property  on the elements of  $\overrightarrow{\mathcal  D}(BC_r)$  is evident.
\end{proof}

\begin{remark}
Although the  system $\Phi^{eq}(BC_r)$ is not unimodular for the lattice $\Lambda=\oplus \Z e^i$, we see that any $ \overrightarrow{\sigma}$ in the set $\overrightarrow{\mathcal  D}(BC_r)$ above is unimodular, so that the computation of
$\SZ^{\Lambda}(\tau)(f_\sigma)$ is easy.
\end{remark}

\subsubsection{The system of type $D_r$}\label{D}
We consider $V=\R^r$ with standard basis $\{e^1,e^2,\ldots,e^r\}$.
Let $$D_r=[\pm (e_i\pm e_j);\, 1\leq i<j\leq r]$$ be the root system of type $D$ and rank $r$.

Let $z=\sum_{i=1}^r z^i e_i$ in $U$.
We consider the system of hyperplanes
$$\CH_r^D=\cup_{1\leq i<j\leq r}\{ z^i\pm z^j=0\}.$$
The dimension of $\CS_{\CH_r^D}$ is known to be $(1)(3)(5)\cdots(2r-3)(r-1)$.
However, we did not find a nice diagonal basis for $\CS_{\CH_r^D}$.  Instead, we proceed as follows.

\bigskip

The set  $U_{\CH_r^D}$  of regular elements for $\CH_r^D$ contains $U_{\CH_r^{BC}}$.
Indeed, for any $z$ in $U_{\CH_r^D}$, we have $z^i\pm z^j\neq 0$, but $z^i$ may equal zero.

We define the set
$$U_{k,r}:=\{z^k=0,\, z^i\pm z^j \neq 0\; \text{for}\;1\leq i<j\leq r \; \text{and}\; z^i\neq 0 \;\text{for}\; 1\leq i\leq r, \,i\neq k\}.$$
Then, we have the following disjoint decomposition
$$U_{\CH_r^D}=U_{\CH_r^{BC}}\bigcup \cup_{k=1}^r U_{k,r}.$$
The set $U_{k,r}$ is clearly isomorphic to the open set $U_{\CH^{BC}_{r-1}}$ in rank $r-1$ via the map
$i_k$ which inserts a zero coordinate in position $k$, and hence,

\begin{equation}\label{eq:disjoint}
U_{\CH_r^D}=U_{\CH_r^{BC}}\bigcup \cup_{k=1}^r i_k(U_{\CH^{BC}_{r-1}}).
\end{equation}
The above decomposition allows us to reduce calculations in system of type $D$ to that of systems of type $B$ or $C$.

\subsection{Calculations of multiple Bernoulli series for type  $A$}
We use the same notation as in Section \ref{A}.

Let $Q_A\subset U$ be the root lattice generated by $A_r$,
and $P_A\subset U$ be the weight lattice. Then $P_A$  is  generated by $Q_A$ and $e_1-\frac{1}{r+1}(e_1+e_2+\cdots +e_{r+1})$ and
 $P_A/Q_A$ is of cardinality $r+1$.

Let $\Gamma$ be a lattice such that  $Q_A\subset \Gamma\subset P_A$.
We denote by $\Gamma_{reg}=\Gamma\cap U_{\CH_r^A}$ the set of regular elements in $\Gamma$.

Let $\Lambda\subset V$ be the dual lattice to $\Gamma$, and
let ${\bf s}=[s_{\alpha}]$ be a list of exponents.  Define
$$g^A_{\bf s}(z)=\frac{1}{\prod_{\alpha>0}\ll H_\alpha,z\rr^{s_\alpha}},$$
where the set $\{H_\alpha, \alpha>0\}$ is the set of positive coroots $\Phi^{eq}(A_r)$.

If $v\in V$,
$$\mathcal B(\CH_r^A,\Lambda, g^A_{\bf s})(v)=\sum_{\gamma\in \Gamma_{reg}} \frac{e^{2i\pi \ll v,\gamma\rr}}{\prod_{\alpha>0} (2i\pi \ll H_\alpha,\gamma \rr)^{s_\alpha}}.$$

If we use the diagonal basis $\overrightarrow{\mathcal  D}_W$ for $\Phi^{eq}(A_r)$ as defined in Proposition \ref{pro:OW}, then the diagonal basis consists of elements
$[e^{w(1)}-e^{w(2)},\ldots, e^{w(r)}-e^{r+1}]$ where $w$ is a permutation leaving $e^{r+1}$ stable.
Thus if we express $v=\sum_{i=1}^r v_i (e^{i}-e^{r+1})$, the algebra $Step(\overrightarrow{\mathcal  D}_W)$ consists of functions $\{\sum_I v_i\}$ where $I$
runs over subsets of $\{1,2,\ldots, r\}$.

We now discuss two simple cases, where $\Gamma$ is either the weight lattice $P_A$, or the root lattice $Q_A$.

\subsubsection{Bernoulli series for the weight lattice}

If $P_A$ is the weight lattice, then the dual of $P_A$ is the coroot lattice $\check{Q}_A$
generated by simple coroots $H_\alpha$,
and the system $\Phi^{eq}(A_r)$  of equations (the positive coroots) is unimodular with respect to $\check{Q}_A$.

Thus $\mathcal B(\CH_r^A,\check{Q}_A,g^A_{\bf s})(v)$ is a piecewise polynomial function of degree
$\sum_\alpha s_\alpha$ and lies in the algebra $Step(\overrightarrow{\mathcal  D}_W)$.
Our program  then gives
$\mathcal B(\CH_r^A,\check{Q}_A,g^A_{\bf s})(v)$ as a polynomial expression of the functions   $\{\sum_I v_i\}$.
It also computes numerically the value of this function at any point $v$.

\begin{example}\label{a2exp1}
Consider the root system of type $A$ and of rank $r=2$. We assume all multiplicities $s_\alpha=1$, and compute
$\mathcal B(\CH_2^A,\check{Q}_A, g^A_{\bf s})(v)$ for $v=v_1 e^1+ v_2 e^2+ v_3 e^3$ with $v_1+v_2+v_3=0$.
The simple coroots are $H_{\alpha_1}=e^1-e^2$ and $H_{\alpha_2}=e^2-e^3$, and the remaining positive coroot is their sum
$H_{\alpha_1}+H_{\alpha_2}$.
The dual lattice has basis dual to $H_{\alpha_1}$, $H_{\alpha_2}$.
Thus, for ${\bf s}=[1,1,1],$
\begin{equation}\label{sumsone}
\mathcal B(\CH_2^A,\check{Q}_A, g^A_{\bf s})(v)=\sum'_{m,n \in \Z}\frac{e^{2i\pi m v_1-2i\pi n v_3}}{(2i\pi m)(2i\pi n)(2i\pi (m+n))}.
\end{equation}
The symbol $\sum'$ above means that we sum over the integers $m,n$ with $mn(m+n)\neq 0$.

Denoting the fractional part of $t$ with $\{t\}\in[0,1[$ as before, we obtain
that $P(v_1,v_2,v_3)=\mathcal B(\CH_2^A,\check{Q}_A, g^A_{\bf s})(v_1 e^1+v_2 e^2+v_3 e^3)$ is equal to
{\small \begin{equation}\label{weight}
\frac{1}{6}
(\{v_2\}-\{v_1\})(\{v_1\}^2-3\{v_1+v_2\}\{v_1\}+\{v_2\}\{v_1\}+3\{v_1+v_2\}-1-3\{v_1+v_2\}\{v_2\}+\{v_2\}^2).
\end{equation}}
We remark that the series (\ref{sumsone}) is not absolutely convergent, but the sum has a meaning and is a piecewise polynomial function.
\end{example}
\medskip

Let us give some numerical  examples.
Consider again $A_2$.  Suppose ${\bf s}=[10,10,10]$ and $v_1=v_2=0$.  Then
$$\mathcal B(\CH_2^A,\check{Q}_A,g^A_{\bf s})(0)=\sum'_{m,n}\frac{1}{(2i\pi m)^{10}(2i\pi n)^{10}(2i\pi (m+n))^{10}}$$
is equal to
$$-{\frac {27739097}{4174671932121099276691439616000000}}.$$

\medskip

Consider now the system of type $A$ and rank $r=4$.  Suppose $v=[0,0,0,0,0]$.
We list the exponents
with respect to the following order on the roots
$$[e_1-e_2, e_1-e_3, e_1-e_4, e_1-e_5, e_2-e_3, e_2-e_4, e_2-e_5, e_3-e_4, e_3-e_5, e_4-e_5].$$ Then
for ${\bf s}=[6,6,6,6,4,2,2,2,2,2]$ we have
$$\mathcal B(\CH_4^A,\check{Q}_A,g^A_{\bf s})(0)=\frac{1}{(2i\pi)^{38}}\times $${\tiny{$$\sum'\frac{1}{m_1^6m_2^4m_3^2m_4^2
(m_1+m_2)^6(m_1+m_2+m_3)^6(m_1+m_2+m_3+m_4)^6(m_2+m_3)^2(m_2+m_3+m_4)^2(m_3+m_4)^2}$$}}
{\small $$=\frac{66581757}{2081416538897698301902069565296214016000000000},$$}
while
for  ${\bf s}=[4,4,4,4,4,4,4,4,4,4]$ we obtain
{\small $$\mathcal B(\CH_4^A,\check{Q}_A,g^A_{\bf s})(0)=\frac{3998447009863}{19318834119102098604968210835862034086625280000000000}.$$}

\bigskip

\subsubsection{Bernoulli series for the root lattice}

Let $\xi=\sum_{j=1}^r (e^j-e^{r+1})$. Then a system of representatives
for $\check{P}_A/\check{Q}_A$ consists of the elements $\lambda_j=\frac{j}{r+1} \xi$, with $j$ varying between $0$ and $r$.
Thus, using Formula (\ref{eq:lattices}),
$${\mathcal B}(\CH_r^A,\check{P}_A, g^A_{\bf s})(v)=\frac{1}{r+1}\sum_{j=0}^{r} {\mathcal B}(\CH_r^A ,\check{Q}_A, g^A_{\bf s})(v+\lambda_j).$$
We obtain  an expression for
${\mathcal B}(\CH_r^A,\check{P}_A, g^A_{\bf s})$ in terms of the functions $\{(\sum_I v_i)+c/(r+1)\}$ where $c$ are integers between $0$ and $r$.

Recall Example \ref{a2exp1}. For the same data, we now compute
${\mathcal B}(\CH_2^A,\check{P}_A, g^A_{\bf s})$ for $v=v_1 e^1+ v_2 e^2+ v_3 e^3$ with $v_1+v_2+v_3=0$.
Hence, we express  $v=v_1(e^1-e^2)-v_3(e^2-e^3)$, and compute for  ${\bf s}=[1,1,1]:$
$${\mathcal B}(\CH_2^A,\check{P}_A, g^A_{\bf s})(v)=\sum'_{m,n}\frac{e^{2i\pi m (v_1-v_2)+2i\pi n (v_2-v_3)}}
{(2i\pi (2m-n))(2i\pi (2n-m))(2i\pi (m+n))}.$$
This is equal to
$$\frac{1}{3}\left(P(v_1,v_2,v_3)+P(v_1+\frac{1}{3},v_2+\frac{1}{3},v_3-\frac{2}{3})+
P(v_1+\frac{2}{3},v_2+\frac{2}{3},v_3-\frac{4}{3})\right)$$
where $P$ is the piecewise polynomial function given in Equation (\ref{weight}).

\subsection{Calculation of multiple Bernoulli series for types $C$ and $B$}
We use the same notation as in Section \ref{B}.

\medskip

We now consider the system $\CH_r^{BC}$ of hyperplanes in $U$
$$ \CH_r^{BC}=\cup_{1\leq i\leq r}\{z^i=0\}\cup \cup_{1\leq i<j\leq r}\{z^i\pm z^j=0\}.$$

Let $\Lambda$  be a lattice commensurable with $\oplus \Z e^i$,  with dual lattice $\Gamma$.
Denote simply by  $\Gamma_{reg}=\Gamma_\reg(\CH_r^{BC})$.
If $g\in \CR_{\CH_r^{BC}},$
\begin{equation}\label{typeC}
\mathcal B(\CH_r^{BC},\Lambda,g)(v)=\sum_{\gamma\in \Gamma_{\reg}}
 g(2i\pi \gamma)e^{2i\pi \ll v,\gamma\rr}.
\end{equation}

\subsubsection{Root system $C_r$}

Let $P_C$ be the weight lattice of the root system $C_r$.
We thus have the coroot lattice $\check{Q}_C=\oplus_{i=1}^r \Z e^i$.

Let ${\bf s}=[s_{\alpha}]$ be a list of exponents and
let $$g_{\bf s}^C(z)=\frac{1}{\prod_{\alpha>0}\ll H_\alpha,z\rr^{s_\alpha}}.$$
Here $\{H_\alpha, \; \alpha>0\}$ are positive coroots of the system $C_r$, which are explicitly
$$\{ e^i, 1\leq i\leq r, (e^i\pm e^j), 1\leq i<j\leq r\}.$$
Clearly, the function $g_{\bf s}^C$ belongs to $\CR_{\CH_r^{BC}}$.

If $v\in V$,
$$\mathcal B(\CH_r^{BC},\check{Q}_C, g_{\bf s}^C)(v)=\sum_{\gamma\in (P_C)_{reg}} \frac{e^{2i\pi \ll v,\gamma\rr}}{\prod_{\alpha>0} (2i\pi \ll H_\alpha,\gamma \rr)^{s_\alpha}}.$$

The function $\mathcal B(\CH_r^{BC},\check{Q}_C, g_{\bf s}^C)$ is a piecewise polynomial function on $V$ of degree
$\sum_\alpha s_\alpha$.
We  use the diagonal basis  constructed in Section \ref{B} to compute it.
Let us now compute the example given in the introduction, which corresponds to $C_2$,
and the exponent ${\bf s}=[2,1,1,1]$ ordered in accordance with the order $[2e_1,2 e_2,e_1+e_2,e_1-e_2]$ of
positive roots (so that coroots $H_\alpha$ are  in order  $[e^1, e^2,e^1+e^2,e^1-e^2]$).
We express $v=v_1 e^1+v_2 e^2$ and compute $$\mathcal B(\CH_2^{BC},\check{Q}_C, g^C_{\bf s})(v)=
\sum'_{m,n}\frac{e^{2i\pi m v_1+2i\pi n v_2}}{(2i\pi m)^{2}(2i\pi n)(2i\pi (m+n))(2i\pi (m-n))}.$$

This piecewise polynomial function  is given by
{\small \begin{equation}\label{eq:BB2} \end{equation}
$$\mathcal B(\CH_2^{BC},\check{Q}_C, g^C_{\bf s})(v)= Q(v_1,v_2)=$$$$
-{\frac {1}{160}}\,{ \left\{ -v_{{2}}+v_{{1}} \right\} }^{5}-1/48\,{
 \left\{ v_{{1}} \right\} }^{2}+1/24\,{ \left\{ v_{{1}} \right\} }^{3}
+1/24\,{ \left\{ v_{{2}}+v_{{1}} \right\} }^{3} \left\{ v_{{2}}
 \right\} -$$$$1/48\,{ \left\{ v_{{2}}+v_{{1}} \right\} }^{4} \left\{ v_{{
2}} \right\} -1/48\,{ \left\{ v_{{2}}+v_{{1}} \right\} }^{2} \left\{ v
_{{2}} \right\} -{\frac {1}{960}}\, \left\{ v_{{2}}+v_{{1}} \right\} +
{\frac {1}{96}}\,{ \left\{ v_{{2}}+v_{{1}} \right\} }^{2}-$$$${\frac {1}{
96}}\,{ \left\{ v_{{2}}+v_{{1}} \right\} }^{3}-{\frac {1}{192}}\,{
 \left\{ v_{{2}}+v_{{1}} \right\} }^{4}+{\frac {1}{960}}\, \left\{ -v_
{{2}}+v_{{1}} \right\} +{\frac {1}{96}}\,{ \left\{ -v_{{2}}+v_{{1}}
 \right\} }^{2}-$$$$1/32\,{ \left\{ -v_{{2}}+v_{{1}} \right\} }^{3}+{
\frac {5}{192}}\,{ \left\{ -v_{{2}}+v_{{1}} \right\} }^{4}-1/48\,{
 \left\{ v_{{1}} \right\} }^{4}+1/24\,{ \left\{ v_{{1}} \right\} }^{2}
 \left\{ v_{{2}} \right\} -$$$$1/12\,{ \left\{ v_{{1}} \right\} }^{3}
 \left\{ v_{{2}} \right\} +1/24\,{ \left\{ v_{{1}} \right\} }^{4}
 \left\{ v_{{2}} \right\} +{\frac {1}{160}}\,{ \left\{ v_{{2}}+v_{{1}}
 \right\} }^{5}+1/24\,{ \left\{ -v_{{2}}+v_{{1}} \right\} }^{3}
 \left\{ v_{{2}} \right\} -$$$$1/48\,{ \left\{ -v_{{2}}+v_{{1}} \right\} }
^{4} \left\{ v_{{2}} \right\} -1/48\,{ \left\{ -v_{{2}}+v_{{1}}
 \right\} }^{2} \left\{ v_{{2}} \right\}.
$$}





\medskip

Let's see what happens  on a tope.   Figure \ref{b2} depicts topes associated to the pair
$\Phi^{eq}(BC_2)=\{e^1, e^2, e^1+e^2, e^1-e^2\}$ and
$\Lambda=\Z e^1 \oplus \Z e^2$.

Consider for example the tope $$\tau_2=\{v_1>0,\, v_2>0,\, v_1>v_2,\, v_1+v_2<1\}.$$
Then on $\tau_2$, the piecewise polynomial function  $\mathcal B(\CH_2^{BC},\check{Q}_C, g^C_{\bf s})(v)$ coincides with the polynomial
\begin{equation} \mathcal B(\CH_2^{BC},\check{Q}_C, g^C_{\bf s})(v)=Q_{\tau_2}(v_1,v_2)=\end{equation}
{\tiny
$$\frac{1}{8}(-{\frac {1}{60}}\,v_{{2}}+1/2\,{v_{{1}}}^{2}v_{{2}}-{v_{{1}}}^{3}v_{{2}}+1/6\,{v_{{2}}}^{2}-v_{{1}}{v_{{2}}}^{2}+v_{{1}}{v_{{2}}}^{3}
+{v_{{1}}}^{2}{v_{{2}}}^{2}-1/6\,{v_{{2}}}^{3}+1/6\,{v_{{2}}}^{4}+1/2\,{v_{{1}}}^{4}v_{{2}}-{v_{{1}}}^{2}{v_{{2}}}^{3}-{\frac {7}{30}}\,
{v_{{2}}}^{5})$$}
If we compute $Q_{\tau_2}(v_1,v_2)$ for $v_1=\frac{1}{15},\ v_2=\frac{1}{30}$ we obtain
$-\frac{276037}{5832000000}$.

\medskip

We give some more numerical examples with different exponents.

\medskip

For example, we may compute with exponents ${\bf s}=[s_1,s_2,s_3,s_4]$ associated to the order of roots $[2e_1,2 e_2,e_1+e_2,e_1-e_2]$
  and $v=[v_1,v_2]$

{\small \[\mathcal B(\CH_2^{BC},\check{Q}_C, g^C_{\bf s})(v_1e^1+v_2e^2)=\sum'_{m,n}\frac{e^{2i\pi mv_1+2i\pi n v_2}}
{(2i\pi m)^{s_1}(2i\pi n)^{s_2}
(2i\pi (m+n))^{s_3} (2i\pi (m-n))^{s_4}}\]}
\[=\left\{ \begin{array}{l@{\quad if \quad } l@{\quad and\quad} l}
\frac{810650239}{132316540312500}&  ${\bf s}=[2,2,1,1]$&$v=[1/5, 1/19]$\\
\frac{47036110438854761301636459941}{1529174429579197250943325345977126782238720}&${\bf s}=[2,3,4,5]$
& $v=[1/7, 1/17]$
\end{array}\right.\]

\bigskip

\subsubsection{Root system $B_r$}

We consider  the root system of type $B$ and rank $r$. Let $\Gamma=P_B$ be the lattice of weights of $B$, and
as before the dual lattice generated by the coroots is denoted by $\check{Q}_B$.

Let  ${\bf s}=[s_{\alpha}]$ be a list of exponents  and
$$g^B_{\bf s}(z)=\frac{1}{\prod_{\alpha>0}\ll H_\alpha,z\rr^{s_\alpha}}.$$
Here, again,  $\{H_\alpha, \; \alpha>0\}$ are positive coroots of the system $B_r$.

If $v\in V$, ${\bf s}=[s_{\alpha}]$ then
$$\mathcal B(\CH_r^{BC},\check{Q}_B, g_{\bf s}^B)(v)=\sum_{\gamma\in (P_B)_{reg}} \frac{e^{2i\pi \ll v,\gamma\rr}}{\prod_{\alpha>0} (2i\pi \ll H_\alpha,\gamma \rr)^{s_\alpha}}.$$

Clearly, as long coroots of $B$  are twice the short coroots of $C$, and short coroots of $B$ are long coroots of $C$, we have
$$g_{\bf s}^B=c_1 g_{\bf s}^C, \ \text {where} \ c_1=\frac{1}{2^{s_{2e_1}+\cdots+s_{2e_r}}}.$$
We then use the comparison formula with two lattices.
Indeed, we have $2 \check{Q}_C\subset  \check{Q}_B.$
The lattice
$2 \check{Q}_C$ is of index
$2^{r-1}$  in  $\check{Q}_B$ and a set  of representatives  is given, for example, by
$$F :=\{0,e^{i_1}+e^{i_2}+\cdots +e^{i_k}, 1\leq i_1< i_2<\cdots <i_k,\ k=2j,  \ 1\leq j\leq [r/2]\}.$$

We then use Formulae (\ref{eq:lattices}) and (\ref{eq:dillattices}).
Since $g^C_{\bf s}$ is homogeneous of degree $- \sum_\alpha s_{\alpha}$, we obtain
$$\mathcal B(\CH_r^{BC},\check{Q}_B,g^B_{\bf s})(v)=
\frac{1}{2^{r-1}}c_2\left(\sum_{\lambda\in F}\mathcal B(\CH_r^{BC},\check{Q}_C,g^C_{\bf s})({ \frac{v+\lambda}{2}})\right)$$
where $c_2=2^{\sum_{1\leq i<j\leq r} s_{e_i-e_j}+s_{e_i+e_j}}$.
In particular, if  ${\bf s}=[m,\ldots,m]$, then $c_2=2^{r(r-1)m}.$

For example, for $B_2$, we compute for $v=v_1 e^1+ v_2 e^2$  and  multiplicities ${\bf s}=[2,1,1,1]$ with respect to the order $[e_1-e_2, e_2, e_1+e_2, e_1]$ of roots,
$$(2i\pi)^5\mathcal B(\CH_2^{BC},\check{Q}_B,g^B_{\bf s})({v})=\sum'_{m_1,m_2}\frac{e^{2i\pi( \left( m_1+1/2\,m_2 \right) v_1+1/2\,m_2v_2
)}}{m_1^2m_2\left(2 m_1+\,m_2 \right)  \left( m_2+m_1
\right) } $$
where the symbol $\sum'$ means that we sum over the $m_1,m_2$ with $$\ \left( 2m_{{1}}+\,m_{{2}} \right)  \left( m_{{2}}+m_{{1}}
\right) m_{{2}}m_{{1}}\neq 0.$$

We obtain
$$\mathcal B(\CH_2^{BC},\check{Q}_B,g^B_{\bf s})({{v}})=2\left(Q(\frac{v_1}{2},\frac{v_2}{2})+Q(\frac{v_1+1}{2},\frac{v_2+1}{2})\right)$$
where $Q=\mathcal B(\CH_2^{BC},\check{Q}_C,g^C_{\bf s})$ is given in Equation (\ref{eq:BB2}).

In particular, for $u=[1/15,1/30]$ we obtain
$$\mathcal B(\CH_2^{BC},\check{Q}_B,g^B_{\bf s})({{u}})=\frac{-276037}{5832000000}.$$

\medskip

For $B_3$,
 we compute for $v=v_1 e^1+ v_2 e^2+ v_3 e^3$ and ${\bf s}=[1,1,1,1,1,1,1,1,1]:$
{\tiny{$$(2i\pi)^{9}\mathcal B(\CH_3^{BC},\check {Q_B},g^B_{\bf s})({{v}})=$$$$\tiny{\sum'_{m_1,m_2,m_3}\frac{e^{2i\pi( \left( m_1+m_2+1/2\,m_3 \right) v_1+ \left( m_2+1
/2\,m_3 \right) v_{{2}}+1/2\,m_3v_3
)}}{ \, \left( 2m_{{1}}+2 m_{{2}}+\,m_{{3}} \right)  \left( 2m_{{2}}+\,m
_{{3}} \right) m_{{3}}m_{{1}} \left( m_{{1}}+2\,m_{{2}}+m_{{3}}
 \right)  \left( m_{{1}}+m_{{2}} \right)  \left( m_{{1}}+m_{{2}}+m_{{3
}} \right) m_{{2}} \left( m_{{2}}+m_{{3}} \right)
}}.$$}}

We obtain
$$\mathcal B(\CH_3^{BC},\check {Q}_B,g^B_{\bf s})({v})=$$
{\small $$2^4\left(S(\frac{v_1}{2},\frac{v_2}{2},\frac{v_3}{2})+S(\frac{v_1+1}{2},\frac{v_2+1}{2},
\frac{v_3}{2})+S(\frac{v_1+1}{2},\frac{v_2}{2},\frac{v_3+1}{2})+S(\frac{v_1}{2},\frac{v_2+1}{2},\frac{v_3+1}{2})\right)$$}
where $S=\mathcal B(\CH_3^{BC},\check {Q}_C,g^C_{\bf s})$ is a piecewise polynomial that is too long to be included here.

\medskip

\subsection{Calculation of multiple Bernoulli series for type $D$}

We follow the same notation as in Section \ref{D}.

Let $\mathcal{H}_r^D=\cup_{i,j}\{z^i\pm z^j=0 ,1\leq i<j\leq r\}$, and let ${\bf s}=[s_\alpha]$ be a list of exponents.
The ordering of elements in the list ${\bf s}$ is taken to match the following ordering
$[e_1-e_2,e_1-e_3,\ldots,e_1-e_r,e_2-e_3,\ldots, e_1+e_2,\ldots,e_{r-1}+e_r]$ of positive roots of the system $D_r$.

Consider
$$g^D_{\bf s}(z)=\frac{1}{\prod_{\alpha>0}\ll H_\alpha,z\rr^{s_\alpha}},$$
where  $\{H_\alpha, \; \alpha>0\}$ are positive coroots of $D_r$.

We  embed the list of roots of $D_r$ in to the list of roots of $B_r$ by writing the short roots $e_i$ of $B_r$ at the
very end of the list.  We denote by ${\bf{S}}=[s_\alpha,0,\ldots, 0]$  the list obtained from  ${\bf s}$ by adjoining
$r$ zeros to its end.  Then, by construction, we have
$$g^D_{\bf s}(z)=g^B_{\bf S}(z).$$

We now associate to the list $\bf{s}$  a list of exponents
${\bf{s}}_k$ for the system $B$ of rank $r-1$. In $\bf{s}$ we
eliminate the position  corresponding to the roots $e_i\pm e_k$ for
$i<k$, and $e_k\pm e_i$ for $i>k$.  Then we assign the value
$s_{e_i+e_k}+s_{e_i-e_k}$ to the exponent corresponding to root $e_i$ of $B_{r-1}$
for $i<k$ , similarly we assign the value $s_{e_k+e_i}+s_{e_k-e_i}$ to the exponent corresponding to the root $e_i$ of
$B_{r-1}$ for $i>k$.

We also let $i_k(v)$ to be the vector with $r-1$ coordinates obtained from $v=\sum_{i=1}^r v_i e^i$  by putting $v_k=0$.

Let $\Gamma=P_D$ be the weight lattice of $D$ and $\check{Q}_D$ the dual lattice generated by the coroots.
Since $P_D$ is the weight lattice of the simply connected group $Spin(2r)$,
$\gamma=\sum_{i=1}^r \gamma^i e_i$ is in $P_D$ if
$\gamma^i\pm \gamma^j\in \Z$ and $P_D=P_B$.
Consider the intersection of $P_D$ with the hyperplane $z^k=0$.
Then, we see that this intersection is isomorphic to the weight lattice of a system $C_{r-1,k}$  of type $C$, rank $r-1$, embedded in $C$ of rank $r$
with simple roots $\{e_1-e_2,e_2-e_3,\ldots,e_{k-1}-e_{k+1},e_{k+1}-e_{k+2},\ldots, e_r\}.$

Using the decomposition (\ref{eq:disjoint}), we decompose the set of regular elements of the lattice $P_D$ as a disjoint
union of the
set of regular elements of the lattice $P_B$ and the set
of regular elements of the lattice $P_C$ of rank $r-1$.

In particular, if $\gamma \in (P_{D})_{reg}$ and $\gamma_k=0$, then
\[\frac{1}{{\prod \atop \alpha >0} ( 2i\pi \ll H_{\alpha},\gamma \rr)^{s_{\alpha}}}=\]
\[\frac{1}{{\prod \atop \gamma_i\pm \gamma_j \neq 0,\gamma_k=0}
(2i\pi (\gamma_i- \gamma_j ))^{s_{e_i-e_j}}(2i\pi (\gamma_i+ \gamma_j ))^{s_{e_i+e_j}}}= c_kg^C_{{\bf s}_k}(i_k(\gamma))\]
with $c_k=(-1)^{\sum_{j=k+1}^r{s_{e_k-e_j}}}.$
In particular, if ${\bf s}=[m,\ldots,m]$, then $c_k=(-1)^{(r-k)m}.$

Thus, we can compute multiple Bernoulli series for a system of type $D_r$ by using computations for types $B_r$  and
$C_{r-1}$ with appropriate exponents.  More explicitly,
\[\mathcal{B}(\mathcal{H}_r^D, \check{Q}_D,g^D_{\bf s})({v})=\sum_{\gamma \in (P_{D})_{reg}}  \frac{e^{ 2i\pi \ll v ,\gamma \rr}}{
\prod_{\alpha >0} (2i\pi \ll H_{\alpha},\gamma \rr)^{s_{\alpha}}}= \]
\[\mathcal{B}(\mathcal{H}_r^{BC}, \check{Q}_B,g^B_{\bf S})({v})+
\sum_{k=1}^r c_k\mathcal{B}(\mathcal{H}_{r-1}^{BC}, {\check{Q}}_{C_{r-1,k}},g^C_{{\bf s}_k})({i_k(v)}).\]

%


%
%
%
%
\section{Witten formula for volumes of moduli spaces of flat connections on surfaces}

Let $G$ be a simple, simply connected, compact Lie group of rank $r$
with maximal torus $T$.  For $g_1,g_2\in G$, we denote by
$[g_1,g_2]=g_1g_2 g_1^{-1} g_2^{-1}$ the commutator of $g_1,g_2$.
Let $\Sigma$ be a compact connected oriented surface of genus $g$
and let $p:=\cup_j\{p_j\}$ be a set of  $s$ points on $\Sigma$. Let
$\mathcal C:=(\mathcal C_j)$ be a set of $s$ conjugacy classes in
$G$. We consider the representation variety
$$\mathcal M(G,g,s,\mathcal C):=\{(a,c)\in G^{2g}\times \mathcal C;\;\prod_{i=1}^g [a_{2i-1},a_{2i}]=\prod_{j=1}^s c_j\}/G.$$
If the adjoint orbits $\mathcal C_j$ are generic, this is an
orbifold of dimension $(2g-2)\dim G+s\dim G/T$. It parameterizes the
set of flat $G$-valued connections on $\Sigma-p$, with holonomy
around $p_j$ belonging to the conjugacy class $\mathcal C_j$ modulo
gauge equivalence. As shown by Atiyah-Bott \cite{ab}, once a
$G$-invariant inner product on $\mathfrak g$ is chosen, the manifold
$\mathcal M(G,g,s,\mathcal C)$ carries a natural  symplectic form,
and Witten gave a formula for the volume of $\mathcal
M(G,g,s,\mathcal C)$ that we recall.

\medskip

We use the notation of Section \ref{section:clroot}.  We need some more definitions.

\medskip



Let $R^+$  be a choice of positive roots; denote the highest root of $R$ by $\theta$.
Let $\frh_{+} : =\{h\in \frh_\R;\; \alpha(h)\geq 0\;\text{for all}\;\alpha \in R^+\}$ be the positive chamber
(closed) in $\frh_\R$.  Let $\frA:=\{h\in \frh_{+} ;\; \theta(h)\leq 1\}$ be the fundamental alcove.
An element of $\frA$ is said to be {\it regular} if it lies strictly inside the alcove.




Let $W=W(\frg_{\C},\frh)$ be the Weyl group (identified with $N_G(T)/T$).

%
%

\medskip

We now give the Witten formula.

\medskip

Let ${\bf a}=\{a_1,a_2,\ldots, a_s\}$ be a set  of regular elements in $\frA \subset \frh_+$.
Let $\mathcal C_j$ be the adjoint orbit of $\exp(a_j)$; we denote the collection of orbits $\mathcal C_j$ by $\mathcal C$.

Consider the function on $\frh^*$ given by
$$N_{\bf a}(\lambda)=\prod_{j=1}^s \sum_{w\in W} \varepsilon(w) e^{\ll wa_j,\lambda\rr}.$$

Let $\Phi=\Phi(G)$ be the list of positive coroots $H_\alpha$.  Define
\[
W(\Phi(G), P,g,s)({\bf a}):=\sum_{\gamma \in P^{reg}}\frac{N_{\bf a}(2i\pi \gamma)}{\prod_{H_\alpha \in \Phi}
(2i\pi \ll H_\alpha, \gamma\rr)^{2g-2+s}}.\]

The above expression is always meaningful as a generalized function of the parameters $a_j$.  If $s=0$, this formula
has to be understood as
\[
W(\Phi(G), P,g)=\sum_{\gamma \in P^{reg}}\frac{1}{\prod_{H_\alpha \in \Phi} (2i\pi \ll H_\alpha,
\gamma\rr)^{2g-2}}\]
which is meaningful if $g\geq 2$.

Interchanging the sum and the product, $N_{\bf a}(\lambda)$ may be expressed as
$$N_{\bf a}(\lambda)=\sum_{(w_1, w_2, \ldots, w_s)\in W^s}
\prod_{j=1}^s \varepsilon(w_j)e^{\sum_{j=1}^s \ll w_ja_j,\lambda\rr}.$$
Hence the function $W(\Phi(G), P,g,s)({\bf a})$ can be expressed as a sum over $W^s$ with signs of Bernoulli series
$\CB(\Phi_{2g-2+s},\check Q)(\sum_j w_j a_j)$.  Here, as before, $\Phi_{2g-2+s}$ means that each coroot in $\Phi$ is taken
with multiplicity $2g-2+s$.

\medskip

As is well known, the series $W(\Phi(G), P,g,s)({\bf a})$ computes the
symplectic volume of $\mathcal M(G,g,s,\mathcal C)$ up to a scalar factor, which we will give in the next section.

\bigskip

Let us now recall the normalization of the volume as the limit of the Verlinde  formula.

\medskip

We need some more notation.




Let $(\;|\;)$ denote the $G$-invariant symmetric form  on $\frg_{\C}$ normalized such that
$(H_{\theta}|H_{\theta})=2$.  We will use the same notation for the
restricted form on $\frh$, and the induced form on $\frh^*$.
We call  $(\;|\;)$  the {\it basic invariant form}.  It is positive definite on the real span
$\frh_\R$, and negative definite on $\frt$.

\medskip

Let $\check{h}:=\rho(H_\theta)+1$ be the dual Coxeter number, where $\rho$ is the half
sum of positive roots.   Let $Q_{long}\subset Q$ be the lattice spanned by long roots.
The basic invariant form identifies $\frh_\R$ and ${\frh_\R}^*$; under this isomorphism the coroot lattice $\check Q$ is identified to $Q_{long}$.
Let $q$ be the index of $Q_{long}$ in $Q$, and  let $f$ be the  index of $Q$ in $P$.
Let $Z=Z(G)$ denote the center of $G$.

For a positive integer $\ell$, define the set \[P_\ell:=\{\mu \in P
\cap \frh_+^* ; \mu(H_{\theta})\leq \ell\}.\]
An element of $P_\ell$ is said to be a weight of {\it level} $\ell$.  We denote by $P'_\ell$ the subset of
$P_\ell$ consisting of elements $\mu$ satisfying $\mu(H_\theta)<\ell$ and $\mu(H_\alpha)>0$ for any simple root $\alpha$.
By definition of $\check{h}$, there is a bijection between sets $P_\ell$ and
$P'_{\ell+\check{h}}$ via $\mu \mapsto \mu+\rho$.


Consider the maximal torus $T$ of $G$  with Lie algebra $\frt$. If $t=\exp X\in T$, with $X\in \frt$, and $\alpha$ is a
root (which takes imaginary values on $\frt$), we denote by $e^\alpha(t)=e^{\ll\alpha,X\rr}$.
Let $\Delta(t)=\prod_{\alpha \in R}(e^{\alpha}(t)-1)$.
An element of $T$ is said to be regular if $\Delta(t)\neq 0$.
Denote by $T_\ell$ the subgroup of elements $t$ of $T$ such that
$e^{\alpha}(t)$ is $\ell+\check{h}$ root of unity for each long root
$\alpha$.  We denote the set of regular elements in $T_\ell$ by $T^{\text{reg}}_\ell$.

\medskip

We now give the Verlinde formula.

\medskip

Consider the set $\underline \lambda=\{\lambda_1,\lambda_2,\ldots, \lambda_s\}$ with $\lambda_i \in P_\ell$.
Then to this collection of weights ${\underline \lambda}$ of level $\ell$, the group $G$, and a nonnegative integer $g$,
is associated a vector space $\mathcal V(G,g,s, {\underline \lambda},\ell)$ (see \cite{tuy}), called the space of
conformal blocks, whose dimension is given by the Verlinde formula $V(G,\underline \lambda,g,\ell)$:

\[
V(G,\underline \lambda,g,\ell)=(fq)^{g-1}(\ell+\check{h})^{r(g-1)}\displaystyle \sum_{t \in
T_\ell^{\text{reg}}/W}\frac{\chi_{V({\underline \lambda})}(t)}{\Delta(t)^{g-1}}.\]

Above $r$ is the rank of $G$, $V(\underline \lambda)=V_{\lambda_1}\otimes V_{\lambda_2}\otimes \cdots \otimes V_{\lambda_s}$
where $V_{\lambda_i}$ denotes the simple $\frg$ module with highest weight $\lambda_i$, and
$\chi_{V_\lambda}$ denotes the character of $V_\lambda$.  By Weyl character formula
$\chi_{V_\lambda}=J(e^{\lambda+\rho})/J(e^\rho)$ where $J(e^\nu)=\displaystyle \sum_{w\in W} \varepsilon (w)e^{w\nu}$.

\bigskip
We remark that if $\sum_{i=1}^s \lambda_i$ is not in the root lattice, then $V(G,\underline \lambda,g,\ell)$ is zero.
\medskip

Under the isomorphism given by the basic invariant form, an element $a$ lying in $\frA \subset \frh_+$ defines an
element $\hat a$ of $\mathfrak h_+^*$.
We now consider a collection $\{a_1,a_2,\ldots, a_s\}$ of rational elements in $\frA$, that is, each $a_j$ lies in the dense subset
$\frA \cap (\check{Q}\otimes \Q)\subset \frA$.
We may choose $\ell$ large enough so that each $\lambda_j:=\ell \hat a_j$ is a weight; which then lies in $P_{\ell}$.
We furthermore choose $\ell$  so that $\sum_{j=1}^s \lambda_j$ is in the root lattice and consider the space of
conformal blocks
$\mathcal V(G,g,s, {\bf \underline \lambda},\ell)$ associated to this collection
${\underline \lambda}=\{\lambda_1,\lambda_2,\ldots, \lambda_s\}$.
We can dilate simultaneously the weights $\lambda_j$ and the level $\ell$ by a factor $k$.
Then, the function
$$k\to \dim(\mathcal V(G,g,s,[k\lambda_1,k\lambda_2,\cdots,k\lambda_s],k\ell))$$ is a quasi-polynomial in $k$ of degree
$m=\dim (G)(g-1)+s|R^+|$, the complex dimension of the moduli space $\mathcal M(G,g,s,\mathcal C)$.
The volume computes the highest term of this quasi-polynomial.
More precisely,
$$\vol(\mathcal M(G,g,s,\mathcal C))=\lim_{k\to \infty} (\ell k)^{-m} dim(\mathcal V(G,g,s, [k \ell \hat a_1,
\ldots,  k \ell \hat a_s], k\ell).$$

\begin{proposition}\label{P:factor}
Let ${\bf a}=\{a_1,\cdots, a_s\}$ be a  collection of regular rational elements in $\frA$.
Let $\vol(G,g)({\bf a})$ denote the symplectic volume of the moduli
space $\mathcal M(G,g,s,\mathcal C)$. Then,
\[\vol(G,g)({\bf a})=(fq)^{g-1}\frac{|Z|}{|W|}\epsilon_G^{p(2g-2+s)}(-1)^{(g-1)|\Phi(G)|}W(\Phi(G), P,g,s)({\bf a}),\]
where $p$ is the number of short positive roots of $G$, and
$\epsilon_G=2$ for any simple Lie group except $G_2$, it is equal to
$3$ for $G_2$.
\end{proposition}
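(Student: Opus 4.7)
The plan is to extract $\vol(G,g)({\bf a})$ as the top-degree coefficient of the Verlinde quasi-polynomial $k\mapsto\dim\mathcal V(G,g,s,[k\ell\hat a_j],k\ell)$, and to match that coefficient against the Witten series $W(\Phi(G),P,g,s)({\bf a})$. The first step is purely algebraic: apply the Weyl character formula $\chi_{V_\lambda} = J(e^{\lambda+\rho})/J(e^\rho)$ to each factor of $V(\underline\lambda)$, and combine it with the Weyl denominator identity $J(e^\rho)(t) = \prod_{\alpha>0}(e^{\alpha/2}(t)-e^{-\alpha/2}(t))$ and the elementary computation $(e^\alpha-1)(e^{-\alpha}-1) = -(e^{\alpha/2}-e^{-\alpha/2})^2$ to obtain the identity $\Delta(t) = (-1)^{|R^+|}J(e^\rho)(t)^2$. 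The Verlinde summand then becomes
\[
\frac{\chi_{V(\underline\lambda)}(t)}{\Delta(t)^{g-1}} \;=\; (-1)^{(g-1)|\Phi(G)|}\,\frac{\prod_{j=1}^s J(e^{\lambda_j+\rho})(t)}{J(e^\rho)(t)^{2g-2+s}},
\]
which already accounts for the sign $(-1)^{(g-1)|\Phi(G)|}$ of the claim since $|\Phi(G)|=|R^+|$.

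The second step parameterizes $T^{\reg}_\ell/W$. Setting $N := k\ell+\check h$, the basic form $\phi:\frh\to\frh^*$ (under which $\check Q$ corresponds to $Q_{\mathrm{long}}$) identifies each $t\in T^{\reg}_\ell$ with $t_\gamma := \exp(2\pi i\,\phi^{-1}(\gamma)/N)$, where $\gamma$ ranges through the regular weights in a fundamental domain for the translation action of $NQ_{\mathrm{long}}$ on $P$. Passing from $\sum_{t\in T^{\reg}_\ell/W}$ to a $W$-symmetrized sum over such $\gamma$ produces a factor $1/|W|$, while the combinatorial comparison of the parameter lattice to $P$ together with the indices $[P:Q]=f$ and $[Q:Q_{\mathrm{long}}]=q$ produces the extra index $|Z|$. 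Combined with the Verlinde prefactor $(fq)^{g-1}$, this yields the overall constant $(fq)^{g-1}|Z|/|W|$.

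The decisive third step is the asymptotic expansion as $k\to\infty$. At $t_\gamma$ one has
\[
J(e^\rho)(t_\gamma) \;=\; \prod_{\alpha>0}2i\sin\!\bigl(\pi(\alpha|\gamma)/N\bigr) \;\sim\; \prod_{\alpha>0}\tfrac{2\pi i\,(\alpha|\gamma)}{N},
\]
while with $\lambda_j=k\ell\hat a_j$ each factor $J(e^{\lambda_j+\rho})(t_\gamma)$ tends to $\sum_{w\in W}\varepsilon(w)\,e^{\ll wa_j,\,2\pi i\gamma\rr}$, whose product over $j$ is $N_{\bf a}(2\pi i\gamma)$. The powers of $N$ balance exactly: $r(g-1)$ from the Verlinde prefactor, minus $|R^+|(2g-2+s)$ from the $J(e^\rho)^{2g-2+s}$ denominator, minus $m=(r+2|R^+|)(g-1)+s|R^+|$ from the volume normalization, sum to zero. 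The surviving Riemann-type sum over $\gamma$ in the fundamental domain, dilated by $N$, extends to a sum over all of $P^{\reg}$ with denominator $\prod_{\alpha>0}(2\pi i(\alpha|\gamma))^{2g-2+s}$. Converting $(\alpha|\gamma)$ to $\ll H_\alpha,\gamma\rr$ uses $\phi(H_\alpha)=\alpha$ for long $\alpha$ (because $(H_\alpha|H_\alpha)=2$) but $\phi(H_\alpha)=\epsilon_G\alpha$ for short $\alpha$ (because the basic form satisfies $(\alpha|\alpha)=2/\epsilon_G$ on short roots, with $\epsilon_G=2$ for $B,C,F_4$ and $\epsilon_G=3$ for $G_2$); doing this at each of the $p$ short positive roots, raised to the power $2g-2+s$, introduces the factor $\epsilon_G^{p(2g-2+s)}$ and completes the match.

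The main obstacle is the bookkeeping in the second step together with the length normalization in the third: tracking simultaneously the lattice indices $f$, $q$, $|Z|$, $|W|$ and the factor $\epsilon_G$ in a way that loses no stray power of $2$ or $\epsilon_G$. The paper's own acknowledgement of a Kumar-corrected $G_2$ error confirms that this is delicate — an initial naive computation gives $\epsilon_G=2$ for $G_2$, overlooking the anomalous length ratio $3$. A prudent safety check is to verify the formula numerically in a low-rank case (for instance $G_2$ at $g=2$, $s=0$) against an independent evaluation of $\vol(\mathcal M(G,g,s,\mathcal C))$ before committing to the stated normalization.
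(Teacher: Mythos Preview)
Your outline matches the paper's proof closely: both start from the Verlinde formula, insert the Weyl character formula and the identity $\Delta(t)=(-1)^{|R^+|}J(e^\rho)(t)^2$, symmetrize over $W$, take the $k\to\infty$ asymptotics of $J(e^\rho)(t_\mu)=\prod_{\alpha>0}2i\sin(\pi(\alpha|\mu+\rho)/N)$, and finish by converting $(\alpha|\gamma)$ to $\langle H_\alpha,\gamma\rangle$ via the basic form to pick up $\epsilon_G^{p(2g-2+s)}$.

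The one place where your argument diverges from the paper---and where it is not yet a proof---is the origin of the factor $|Z|$. You attribute it to ``the combinatorial comparison of the parameter lattice to $P$ together with the indices $[P:Q]=f$ and $[Q:Q_{\mathrm{long}}]=q$,'' but those indices are already fully accounted for by the Verlinde prefactor $(fq)^{g-1}$; they do not produce a second, independent copy of $|Z|$. In the paper the factor $|Z|$ appears for an analytic reason: the denominator $J(e^\rho)(t_\mu)^{2g-2+s}$ only blows up (and hence the summand only contributes at leading order in $N$) when $t_\mu$ is near a \emph{central} element of $T$; there are exactly $|Z(G)|$ such central elements, and because $\sum_j\lambda_j\in Q$ the numerator $\chi_{V(k\underline\lambda)}$ is invariant under $t\mapsto tz$ for $z\in Z(G)$, so all $|Z|$ contributions are equal. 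That is the mechanism you need to invoke. Relatedly, calling the limiting sum a ``Riemann-type sum \ldots\ dilated by $N$'' is misleading: nothing is being integrated; rather the summation range $W\cdot P'_{N}$ expands to all of $P^{\mathrm{reg}}$ while only the terms with bounded $\gamma$ (those near the identity, and their $|Z|$ central translates) survive at top order.
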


We recall that for simply laced groups $p=0$ since all roots are considered as long.

\begin{proof}
Choose $\ell$ so that each $\lambda_j:=\ell \hat a_j$  lies in $P_{\ell}$ and $\sum_{j=1}^s \lambda_j$ is a root.
Then,
\[
\vol(G,g)({\bf a})=\lim_{k \to
\infty}\frac{1}{(k\ell)^{m}}V(G, k \underline{\lambda}, g,k\ell),\] where
$m=\text{dim}(G)(g-1)+s|R^+|$ is the dimension of the moduli
space $\mathcal M(G,g,s,\mathcal C)$ and
\[
V(G, k\underline \lambda, g,k\ell)=(fq)^{g-1}(k\ell+\check{h})^{r(g-1)}
\displaystyle \sum_{t \in T_{k\ell}^{\text{reg}}/W}\frac{\chi_{V(k\underline \lambda)}(t)}{\Delta(t)^{g-1}}.\]

An element $\mu \in P_\ell$ determines a unique regular element $h_\mu \in \frA$, the image of
$\frac{\mu+\rho}{\ell+\check{h}}$ under the identification given by the basic invariant form.
Denote the image of $h_\mu$ under the exponential map by $t_\mu \in T_\C$.
The set $\{t_\mu: \mu \in P_\ell\}$ form a set of representatives for $T_\ell^{\text{reg}}/W$.
Using also the bijection between sets $P_\ell$ and $P'_{\ell+\check{h}}$ via $\mu \mapsto \mu+\rho$,

\[\begin{array}{ll}
V(G, k\underline \lambda, g,k\ell)&=(fq)^{g-1}(k\ell+\check{h})^{r(g-1)} \displaystyle
\sum_{\mu+\rho \in P'_{k\ell+\check{h}}}\frac{\chi_{V(k\underline \lambda)}(t_\mu)}
{\Delta(t_\mu)^{g-1}}\\
&=(fq)^{g-1}(k\ell+\check{h})^{r(g-1)}|W|^{-1}\displaystyle
\sum_{\mu+\rho \in W \cdot P'_{k\ell+\check{h}}}\frac{\prod_{j=1}^{s}J(e^{k\lambda_j+\rho})(t_\mu)}
{(J(e^\rho)(t_\mu))^s \Delta(t_\mu)^{g-1}}\\
&=\frac{(fq)^{g-1}}{|W|}(k\ell+\check{h})^{r(g-1)}(-1)^{|\Delta^+|(g-1)}\displaystyle
\sum_{\mu+\rho \in W \cdot P'_{k\ell+\check{h}}}\frac{\prod_{j=1}^{s}J(e^{k\lambda_j+\rho})(t_\mu)}
{(J(e^\rho)(t_\mu))^{2g-2+s}}.
\end{array}\]

The second line above follows from the fact that both $\chi_{V(\lambda)}(t)$
and $\Delta(t)$ are $W$-invariant.  The third line follows from the second by the identity
$\Delta(t)=J(e^\rho)(t)\overline{J(e^\rho)(t)}=(-1)^{|R^+|}(J(e^\rho)(t))^2$.

\medskip

We now analyze the above formula as $k$ gets large.

\medskip

The expression
\[\prod_{j=1}^{s}J(e^{k\lambda_j+\rho})(t_\mu)=\prod_{j=1}^{s}\displaystyle
\sum_{w\in W}\varepsilon(w)e^{w(k\lambda_j+\rho)}(t_\mu)\]
\[=\prod_{j=1}^{s}\displaystyle
\sum_{w\in W}\varepsilon(w)\exp\left( 2i\pi\left (\mu+\rho|\frac{w(k \ell\hat a_j+\rho)}{k\ell+\check h}\right)\right ).\]

Now as $k$ gets large, the expression $\exp\left( 2i\pi\left (\mu+\rho|\frac{w(k \ell\hat a_j+\rho)}{k\ell+\check h}\right)\right )$
approaches to $\exp(2i\pi(\mu+\rho|w\hat{a}_j))$. Observe also that the set $W \cdot
P'_{k\ell+\check{h}}$ approaches $P^{\text{reg}}$.  Denote an element
$\mu+\rho$ of this limiting set by $\gamma$.  Hence, $\prod_{j=1}^{s}J(e^{k\lambda_j+\rho})(t_\mu)$ approaches
$\prod_{j=1}^{s}\displaystyle
\sum_{w\in W}\varepsilon(w)e^{\ll 2i\pi\gamma,wa_j\rr}=N_{\bf a}(2i\pi \gamma)$.

\medskip

Now we analyze the denominator of the summand, $$\displaystyle \frac{1}{J(e^\rho)(t_\mu)}
=\displaystyle \frac{1}{\prod_{\alpha>0}(e^{\alpha/2}(t_\mu)-e^{-\alpha/2}(t_\mu))}
=\displaystyle \frac{1}{\prod_{\alpha>0} 2i\sin(\pi\frac{(\alpha|\mu+\rho)}{k\ell+\check h})}.$$
This expression explodes at each central vertex and the contribution from each, as $k$ gets large, is
$\frac{(k\ell+\check h)^{|R^+|}}{\prod_{\alpha>0} 2i\pi(\alpha|\mu+\rho)}$
Also observe that, for $z \in Z(G)$ both $T_\ell^{\text{reg}}$ and $\Delta(t)$ is invariant under $t\mapsto tz$.
Moreover, since $\sum_{i=1}^s \lambda_i$ is in the root lattice by construction, $\chi_{V(k\underline \lambda)}(t)$ is
also invariant.  Therefore, we may add all these equal contributions from central vertices.
(see also Remark 5.8. \cite{tw}).
Hence, we get that the expression
\[\displaystyle
\sum_{\mu+\rho \in W \cdot P'_{k\ell+\check{h}}}\frac{\prod_{j=1}^{s}J(e^{k\lambda_j+\rho})(t_\mu)}
{(J(e^\rho)(t_\mu))^{2g-2+s}}\] approaches
\[|Z(G)|(k\ell+\check{h})^{|R^+|(2g-2+s)}
\displaystyle \sum_{\gamma \in P^{\text{reg}}}\frac{N_{{\bf a}}(2i\pi \gamma)}
{\prod_{\alpha>0}(2i\pi(\alpha|\gamma))^{2g-2+s}}
.\]

By virtue of the normalization in the basic invariant form, if
$\alpha$ is a long root we have $\ll H_{\alpha},\gamma
\rr=(\alpha|\gamma)$; otherwise $\ll H_{\alpha},\gamma
\rr=\epsilon_G(\alpha|\gamma)$, where $\epsilon_G=2$ for any simple
Lie group except $G_2$, it is equal to $3$ for $G_2$. Using also
that the dimension of $G$ is $r+2|R^+|$, and that $|R^+|=|\Phi(G)|$,
we obtain
\begin{multline*}\displaystyle \lim_{k \to
\infty}\frac{1}{(k\ell)^{m}}V(G, k\underline \lambda, g,k\ell)\\
=(fq)^{g-1}\frac{|Z(G)|}{|W|}(-1)^{|\Phi(G)|(g-1)}\epsilon_G^{p(2g-2+s)}W(\Phi(G),P,g,s)({\bf
a})\end{multline*} as claimed.
\end{proof}
\bigskip

\begin{remark}
In the case of one marking, the Verlinde formula reduces to
$$
V(G,\lambda,g,\ell)=(fq)^{g-1}(\ell+\check{h})^{r(g-1)}\displaystyle \sum_{t \in
T_\ell^{\text{reg}}}\frac{e^{\lambda}(t)}{D(t)\Delta(t)^{g-1}},$$
where $D(t)=\prod_{\alpha>0}(1-e^{-\alpha}(t))$.

Following the same line of arguments as in the proof of Proposition \ref{P:factor}
we get that for $s=1$ with $\lambda=\ell \hat a$ lying in the root lattice,

\begin{equation}\label{E:1mark}
\vol(G,g)(a)=(fq)^{g-1}|Z|\epsilon_G^{p(2g-1)}(-1)^{(g-1)|\Phi|}
\sum_{\gamma \in P^{\text{reg}}}\frac{e^{\ll a,2i\pi \gamma\rr}}
{\prod_{H_{\alpha}\in \Phi}(2i\pi\ll
H_{\alpha},\gamma\rr)^{2g-1}}.\end{equation} That is,
$$\vol(G,g)(a)=(fq)^{g-1}|Z|\epsilon_G^{p(2g-1)}(-1)^{(g-1)|\Phi|}\CB(\Phi_{2g-1},\check Q)(a).$$

\end{remark}

\medskip

Let us demonstrate this with an example.
\begin{example}\label{ex:su2onemark}
We consider the moduli space of $\SU(2)$ bundles on a Riemann surface of genus $g$
with one marking.

Let $a=tH_\alpha $ be a regular element in $\mathfrak{A}$; in other words, $0<t<1/2$.
Let $\hat{a} \in \frh_\R^*$ denote the dual of $a \in \frA$ under the isomorphism given by the basic invariant form.
The Verlinde formula for $\SU(2)$ with marking
$\lambda=\ell\hat{a}=\ell t\alpha \in P_\ell$ such that $t\ell$ is a positive
integer (that is, $\lambda$ lies in the root lattice) is
{\small \[V(\SU(2),t\ell\alpha,g,\ell)=2^{g-1}(\ell+2)^{g-1}2(-1)^{g}\sum_{n=0}^{\infty}c_{g,t}(n)(\ell+2)^{2g-1-n}
\frac{B(2g-1-n,t)}{(2g-1-n)!},\]} where $c_{g,t}(n)$ is the $n^{th}$ coefficient of the Taylor series
expansion of $e^{x(g-2t)}\left(\frac{x}{e^x-1}\right )^{2g-1}$ in $x$ around zero, and $B(p,t)$ denotes the
$p^{\text{th}}$ Bernoulli polynomial in $t$.  Clearly $c_{g,t}(0)=1$.

In the expression  for $V(\SU(2),tk\ell\alpha,g,k\ell)$ the highest term in $k$ occurs when $n=0$. Hence,
\[
\begin{array}{ll}
\vol(\SU(2),g)(a)&=\displaystyle \lim_{k\to
\infty}\frac{V(\SU(2),tk\ell\alpha,g,k\ell)}{(k\ell+2)^{3g-2}}\\
&=2^{g-1}2(-1)^{g}c_{g,t}(0)\frac{B(2g-1,t)}{(2g-1)!}\\
&=2^g(-1)^{g}\frac{B(2g-1,t)}{(2g-1)!}
\end{array}\]

We now calculate the volume using Equation (\ref{E:1mark}).

\medskip

We have $P=\Z \rho$ with $\ll \rho,H_\alpha \rr=1$.  In this case, with the notation of Proposition \ref{P:factor},
$s=1$, $p=0$, $q=1$, $f=2$ and $|Z(SU(2))|=2$, hence $2^{p(2g-1)}(fq)^{g-1}|Z(G)|=2^g$.  Using the expression
(\ref{E:1mark}) of the volume in the one marking case,
{\small \[
\begin{array}{ll}
2^g(-1)^{(g-1)|\Phi|}
\displaystyle \sum_{\gamma \in P^{\text{reg}}}\frac{e^{\ll a,2i\pi \gamma\rr}}
{\prod_{H_{\alpha}\in \Phi}(2i\pi\ll H_{\alpha},\gamma\rr)^{2g-1}}&
=2^g (-1)^{g-1}\displaystyle\sum_{n\neq 0}\frac{e^{2i\pi tn}}{(2i\pi n)^{2g-1}}\\
&=2^g(-1)^g \displaystyle \frac{B(2g-1,t)}{(2g-1)!}. \end{array}\]}
Clearly we get the same formula.
\end{example}

\subsection{Volume of the moduli space as a function of the volume of $T$ and $G$}

Let us recall the formula for the symplectic volume  of the moduli
space $\mathcal M(G,g,s,\mathcal C)$ for a set of $s$ regular
conjugacy classes $\mathcal C=(\mathcal C_j)$ in $G$ as given by
E.~Witten (\cite{wi} equation 4.1.14),

{\small \begin{equation}\label{E:wformula}
\vol(\mathcal M(G,g,s,\mathcal C))=\frac{|Z(G)|}{(2\pi)^{2m}}\vol(G)^{2g-2}\vol(G/T)^s
\displaystyle \sum_{\lambda \in \text{IrrG}}\frac{\prod_{j=1}^{s}[\chi_{V_{\lambda}}(\mathcal C_j)\sqrt{\Delta(\mathcal C_j)}]}
{\dim{V_\lambda}^{2g-2+s}}\end{equation}}
where $2m$ is the real dimension of $\mathcal M(G,g,s,\mathcal C)$,
and $\text{IrrG}$ denotes the set of irreducible
representations of $G$.
Above $\vol(G)$, $\vol(G/T)$ are Riemannian volumes of $G$ and $G/T$ which we now express following Bourbaki
(Ch. IX, pages 396-411):

Choose a $\frg$-invariant scalar product.  This determines a Lebesgue measure $\mu$ on $\frg$, via identification of
$\frg$ with $\R^n$ by an orthonormal basis.
Similarly let $\tau$ be the Lesbegue measure on $\frt$ corresponding to the restriction of the scalar product on $\frt$.
We can construct from $\mu$ and $\tau$ Haar measures $\mu_G$ and $\mu_T$
on $G$ and $T$ respectively.

Since we aim to compare the volume formula in Proposition \ref{P:factor} with that of Witten in Equation (\ref{E:wformula}),
we choose the normalized Killing form as the $\frg$-invariant scalar product in the above construction,
as this was our choice in the previous section.
Then, for this choice, with respect to $\mu_G$ and $\mu_T$ constructed as above, we get that
\[\vol(G)=(fq)^{1/2}(2\pi)^{|R^+|+r}\frac{1}{\prod_{\alpha>0}(\alpha|\rho)},\;\;\;
\vol(G/T)=\frac{(2\pi)^{|R^+|}}{\prod_{\alpha>0}{(\alpha|\rho)}}.\]

\medskip

Recall from the previous section that
$$\Delta(t)=J(e^\rho)(t)\overline{J(e^\rho)(t)}=(-1)^{|R^+|}(J(e^\rho)(t))^2,$$
hence it takes positive values on a regular element $t$.
Then, parametrizing irreducible representations of $G$ with the
cone of dominant weights $P^+$,
  for $\mathcal C_j$ the adjoint orbit of $\exp(a_j)$, we may write

$$\chi_{V_{\lambda}}(\mathcal C_j)=\frac{\displaystyle \sum_{w\in W}\varepsilon(w)e^{2i\pi\ll w(\lambda+\rho),a_j\rr}}
{i^{|R^+|}\sqrt{\Delta(\mathcal C_j)}}.$$

\medskip


Let $d(\gamma)=\displaystyle \prod_{\alpha>0}\frac{\ll\gamma,H_\alpha\rr}{\ll\rho,H_\alpha\rr}$;
it computes the dimension of $V_{\gamma-\rho}$.

Thus
\[\displaystyle \sum_{\lambda \in \text{IrrG}}\frac{\prod_{j=1}^{s}\chi_{V_{\lambda}}({\mathcal C_j})\sqrt{\Delta({\mathcal C_j})}}
{\dim{V_\lambda}^{2g-2+s}}=\displaystyle \sum_{\lambda \in P^+}
\frac{\prod_{j=1}^{s}N_{a_j}(2i\pi(\lambda+\rho))}
{i^{s|R^+|}d(\lambda+\rho)^{2g-2+s}}.\]

Observe that the summand above is invariant under the Weyl group (both the numerator and the denominator are anti-invariant
by factor $(\text{sign}(w))^s$ for a Weyl group element $w$).  We get,
\[\displaystyle \sum_{\lambda \in \text{IrrG}}\frac{\prod_{j=1}^{s}\chi_{V_{\lambda}}(\mathcal C_j)\sqrt{\Delta(\mathcal C_j)}}
{\dim{V_\lambda}^{2g-2+s}}=
\frac{1}{|W|}\displaystyle \sum_{\gamma \in P^{reg}}\frac{\prod_{j=1}^{s}N_{a_j}(2i\pi \gamma)}
{i^{s|R^+|d(\gamma)^{2g-2+s}}}.\]

Inserting  the explicit expressions for the volume of $G$ and $G/T$ above into Equation (\ref{E:wformula}),
all $(2\pi)$ factors cancel, and combining the terms we get
\[\begin{array}{ll}
\vol(\mathcal M(G,g,s,\mathcal C))&=
\frac{|Z|}{|W|} (fq)^{g-1}
\frac{(\prod_{\alpha>0}\ll\rho,H_\alpha\rr)^{2g-2+s}}{(\prod_{\alpha>0}(\alpha|\rho))^{2g-2+s}}(-1)^{(g-1)|R^+|}
W(\Phi(G),P,g)(a)\\
&=\frac{|Z|}{|W|}
(fq)^{g-1}\epsilon_G^{p(2g-2+s)}(-1)^{(g-1)|R^+|}W(\Phi(G),P,g)(a),
\end{array}\] which is precisely the formula that we obtained in
Proposition \ref{P:factor}.

\medskip

\section{Various examples of volume calculations}


\begin{example}\label{exa2}
We now compute the volume of the moduli space of $\SU(3)$ bundles on a Riemann surface of genus one using the Witten series.

\medskip

Simple roots are $\alpha_1=e_1-e_2$, $\alpha_2=e_2-e_3$, and fundamental weights are
$\varpi_1=\frac{2e_1-e_2-e_3}{3}$,
$\varpi_2=\frac{e_1+e_2-2e_3}{3}$.  The positive coroots are
\[\Phi(\SU(3))=\{H_{\alpha_1}=e^1-e^2,H_{\alpha_2}=e^2-e^3,H_{\alpha_1+\alpha_2}=e^1-e^3\}\]
and $P=\Z\varpi_1\oplus \Z \varpi_2$.  Let
$\gamma=n_1\varpi_1+n_2\varpi_2$.  Then $\gamma \in P^{\text{reg}}$
if and only if $n_1\neq 0$, $n_2\neq 0$ and $n_1+n_2 \neq 0$.

Consider
\[a=a_1H_{\alpha_1}+a_2H_{\alpha_2}=a_1(e^1-e^3)+(a_2-a_1)(e^2-e^3) \in \frh_\R.\]
Suppose that $a$ is a regular element in $\mathfrak{A}$, in other
words, $2a_1-a_2>0$ $2a_2-a_1>0$ (in particular $a_1>0$ and $a_2>0$)
and $\theta(a_1H_{\alpha_1}+a_2H_{\alpha_2})=a_1+a_2<1$.

\medskip
We compute the volume using the Formula (\ref{E:1mark}).
In this case, $s=1$, $p=0$, $q=1$, $f=3$ and $|Z(SU(3))|=3$; hence, for
$g=1$, $2^{p(2g-1)}(fq)^{g-1}|Z(G)|=3$.
\[\begin{array}{ll}
\vol(\SU(3),g=1)(a)&=3\displaystyle\sum_{\gamma \in
P_{reg}}\frac{e^{2i\pi \ll a,\gamma \rr}}{\prod_{H_\alpha \in \Phi}
2i\pi \ll H_\alpha, \gamma \rr}\\
 &=3\displaystyle\sum_{n_1\neq 0, n_2\neq 0, n_1+n_2 \neq 0}
\frac{e^{2i\pi(n_1a_1+n_2a_2)}}{(2i\pi n_1)(2i\pi
n_2)(2i\pi(n_1+n_2))}
\end{array}\]

and we obtain
{\small \[\vol(\SU(3),g=1)(a)=\left\{\begin{array}{cl}
   -1/2(1+a_1-2a_2)(a_1-1+a_2)(2a_1-a_2) , & a_1\leq a_2\\
   -1/2(a_1-2a_2)(a_1-1+a_2)(2a_1-1-a_2)   & a_1\geq a_2
       \end{array}\right.\]}
\end{example}

\begin{example}
With the notation of Example~\ref{exa2}, we make similar
computations for $\SU(3)$ when genus $g=2$.

\medskip
We compute the volume employing the Formula (\ref{E:1mark}).
In this case, $s=1$, $p=0$, $q=1$, $f=3$, $|Z(\SU(3))|=3$, hence, for
$g=2$, $2^{s(2g-1)}(fq)^{g-1}|Z(G)|(-1)^{(g-1)|\Phi|}=-3^2$.

{\small \[\begin{array}{ll}
\vol(\SU(3),g=2)(a)&=-9\displaystyle\sum_{\gamma \in
P_{reg}}\frac{e^{2i\pi \ll a,\gamma \rr}}{\prod_{H_\alpha \in \Phi}
(2i\pi \ll H_\alpha,\gamma\rr)^3}\\
&=-9\displaystyle \sum_{n_1\neq 0, n_2\neq 0, n_1+n_2 \neq 0}
\frac{e^{2i\pi(n_1a_1+n_2a_2)}}{(2i\pi n_1)^3(2i\pi
n_2)^3(2i\pi(n_1+n_2))^3}
\end{array}\]}

and we obtain
{\small $$\vol(\SU(3),g=2)(a)=\left\{\begin{array}{cl}
1/40320(a_2-2a_1)(a_2-1+a_1)(-1+2a_2-a_1)P_1 , & a_1 \leq a_2\\
1/40320(a_2+1-2a_1)(2a_2-a_1)(a_2-1+a_1)P_2 , & a_1 \geq a_2
       \end{array}\right.$$}
where the polynomials $P_1$ and $P_2$ above are too long to be included in here.

\end{example}

\begin{example}\label{exb2}
We now give an example of the volume of the moduli space of  $\text{Spin}(5)$ bundles on a Riemann surface of
genus $g=1$ with one marking.

Positive roots are
$\{\alpha_1+\alpha_2=e_1,\alpha_2=e_2,\theta=e_1+e_2,\alpha_1=e_1-e_2\}$,
with associated coroots $H_{e_1}=2e^1$, $H_{e_2}=2e^2$,
$H_{e_1-e_2}=e^1-e^2$, $H_{e_1+e_2}=e^1+e^2$.

Let $a=a_1H_{\alpha_1}+a_2H_{\alpha_2}$ be a regular element in $\frA$;
in other words, $a_1>a_2$, $2a_2>a_1$, $2a_2<1$. We can express $a$
as $a=t_1 e^1+t_2 e^2$ (with $t_1=a_1$ and $t_2=2a_2-a_1$), $t_1$
and $t_2$ satisfy $t_1>t_2,t_2>0, t_1+t_2<1$.

We calculate the volume for $B_2$ and genus $g=1$ employing the Formula (\ref{E:1mark}).
In this case, $s=1$, $p=2$, $q=2$, $f=2$, $|Z(Spin5)|=2$. Hence, for
$g=1$, $$2^{s(2g-1)}(fq)^{g-1}|Z(Spin )|(-1)^{(g-1)|\Phi|}=8.$$

 We get,
\[\begin{array}{ll}
\vol(B_2,g=1)(a)&=\displaystyle \frac{1}{2}t_2(t_1-1)(t_1-1+t_2)(t_1-t_2)\\
&=(2a_2-a_1)(a_1-1)(-1+2a_2)(a_1-a_2).
\end{array}\]
\end{example}

\begin{example}
With the notation of Example \ref{exb2}, we compute the volume of the moduli space of $\text{Spin}(5)$ bundles
on a Riemann surface of genus one and two markings.

Let  ${\bf a}=\{a_1,a_2\}$, where $a_1$ and $a_2$ are  regular elements in $\frA$.
 Write $a_1=t_1 e^1+t_2 e^2$ and $a_2=u_1 e^1+u_2 e^2$. Then  the function
$\vol(B_2,1)({\bf a})$ is a piecewise polynomial function of $t_1,t_2,u_1,u_2$.
For example, choose $v_1=\frac{1}{2}e^1+\frac{1}{5 }e^2,\  v_2=\frac{1}{7}e^1+\frac{1}{9} e^2 $  and consider   $\tau({\bf v})\subset \frA\times \frA$,
the open set determined by the condition that  $a_1+w(a_2)$ is  in the same tope as
$v_1+w(v_2)$ for each element $w$ in the Weyl group of $B_2$.
Then for ${\bf a}\in \tau({\bf v})$,  we have
{\small{ $$\vol(B_2,1)({\bf a})=4 W(\Phi(B_2), P,1,2)({\bf a})=$$}\tiny{$$-{\frac {1}{60}}\,u_{{2}}{u_{{1}}}^{5}+1/6\,t_{{2}}u_{{2}}t_{{1}}{u_{{
1}}}^{3}-1/6\,t_{{2}}{u_{{2}}}^{3}t_{{1}}u_{{1}}+1/6\,{u_{{2}}}^{3}u_{
{1}}{t_{{2}}}^{2}-1/6\,u_{{2}}{t_{{2}}}^{2}{u_{{1}}}^{3}-1/12\,{u_{{2}
}}^{3}u_{{1}}{t_{{2}}}^{3}+$$}\tiny{$$1/12\,u_{{2}}{t_{{2}}}^{3}{u_{{1}}}^{3}-{
\frac {1}{60}}\,{u_{{2}}}^{5}t_{{1}}u_{{1}}+1/12\,t_{{2}}u_{{2}}{t_{{1
}}}^{3}{u_{{1}}}^{3}-1/4\,t_{{2}}u_{{2}}{t_{{1}}}^{2}{u_{{1}}}^{3}+1/4
\,t_{{2}}{u_{{2}}}^{3}{t_{{1}}}^{2}u_{{1}}+1/6\,{t_{{2}}}^{2}u_{{2}}t_
{{1}}{u_{{1}}}^{3}-$$}$$1/6\,{t_{{2}}}^{2}{u_{{2}}}^{3}t_{{1}}u_{{1}}+{
\frac {1}{60}}\,u_{{2}}t_{{1}}{u_{{1}}}^{5}-1/12\,{t_{{2}}}^{3}u_{{2}}
t_{{1}}{u_{{1}}}^{3}+1/12\,{t_{{2}}}^{3}{u_{{2}}}^{3}t_{{1}}u_{{1}}-1/
12\,t_{{2}}{u_{{2}}}^{3}{t_{{1}}}^{3}u_{{1}}+{\frac {1}{60}}\,{u_{{2}}
}^{5}u_{{1}}.
$$}

We compute some values.
\[ \vol(B_2,1)({\bf a})=\]
\tiny{\[ \left\{ \begin{array}{l@{\quad if \quad } l@{\quad and\quad} l}
\frac{141791}{372163703625}& a_1=\frac{1}{2}e_1+\frac{1}{5 }e_2& a_2=\frac{1}{7}e_1+\frac{1}{9} e_2\\
\frac{1418037104720960397931}{3721637036250000000000000000}&a_1=(\frac{1}{2}+\frac{1}{10000})e_1+(\frac{1}{5 }+\frac{1}{100000})e_2
& a_2=\frac{1}{7}e_1+\frac{1}{9} e_2
\end{array}\right.
 \]}
\end{example}

\bigskip

\section{More examples}\label{japex}
In this section we will compute some instances of the Witten volume using the formula
{\small \[\vol(G,g)({\bf a})=W(\Phi(G), P,g,s)({\bf a})2^{p(2g-2+s)}(fq)^{g-1}|Z(G)|(-1)^{(g-1)|\Phi(G)|}|W|^{-1}\]}
as given in Proposition \ref{P:factor}. We will denote by $c_{\vol}$ the factor
$$c_{\vol}:=2^{p(2g-2+s)}(fq)^{g-1}|Z(G)|(-1)^{(g-1)|\Phi(G)|}|W|^{-1}.$$
For convenience, we list values of the parameters in $c_{\vol}$ for each type of classical Lie group in Table
\ref{cosvalue}.

\begin{table}[]
\caption{Value tables}

\begin{center}
\begin{tabular}{||c|c|c|c|c|c||}
\hline \hline\
$G$-type& $q$&$f$& $p$&$|Z(G)|$&$|W|$\\
\hline\hline
$A_r$&$1$&$r+1$&0&$r+1$&$(r+1)!$\\ \hline
$B_r$&2&2&$r$&2&$2^rr!$\\ \hline
$C_r$&$2^{r-1}$&2&$r(r-1)$&2&$2^rr!$\\  \hline
$D_r$&$1$&4&$0$&4&$2^{r-1}r!$\\ \hline
\hline \hline
\end{tabular}
\end{center}
\label{cosvalue}
\end{table}

\subsection{Tables of  volumes of moduli spaces}
We simply denote by $\vol(G,g)$ the Witten volume in the case of no marking, that is, when $s=0$.
We will list some values of $\vol(G,g)$ for classical Lie groups in Tables \ref{Witten table A-D} and
\ref{Witten table B-C}.
We will also list some values of the factor $c_{\vol}$ that we will need in Section \ref{comp} to
compare our computations with other numerical results in literature.


Computations are very quick for rank less or equal to $4$ (and relatively small genus).
Beyond rank $5$, computations cannot be made within a time limit of half-hour with our method.

\newpage

\begin{sidewaystable}
\vspace{14cm}\hspace{2cm}\scriptsize
\vspace{1cm}\hspace{2cm}\scriptsize
\begin{tabular}{|@{\barra}|c|c|c|c||}
\hline \hline
G&g&$\vol(G,g)$&$c_{\vol}$\\
\hline\hline
&2&$\frac{1}{20160}$&-$\frac{3}{2}$\\ \cline{3-4}
&3&$\frac{19}{41513472000}$&$\frac{9}{2}$\\ \cline{3-4}
&4&$\frac{1031}{189225711747072000}$&-$\frac{27}{2}$\\  \cline{3-4}
&5&$\frac{32293}{487445433460457472000000}$&$\frac{81}{2}$\\ \cline{3-4}
$A_2$&6&$\frac{27739097}{34359439770544026968653824000000}$&-$\frac{243}{2}$\\ \cline{3-4}
&7&$\frac{29835840687589}{3031957229004108930561012205092864000000000}$ &$\frac{729}{2}$\\ \cline{3-4}
&8&$\frac{71810708985991}{598678146610235332992855225968816553984000000000}$&-$ \frac{2187}{2}$\\ \cline{3-4}
&9&$\frac{221137132669842886663}{151246026314426013297816671756013269041872371712000000000000}$&$\frac{6561}{2}$\\ \cline{3-4}
&10&$\frac{7252062205115875364801443}{406913205782775738093852803149608268676912309073384308736000000000000}$&-$ \frac{19683}{2}$\\ \cline{3-4}

\hline \hline
&2&$\frac{23}{653837184000}$&$\frac{2}{3}$\\ \cline{3-4}
&3&$\frac{14081}{64814699109633048576000000}$&$\frac{8}{3}$\\ \cline{3-4}
$A_3$&4&$\frac{17634884778757}{11155091763154851836756404076509396992000000000}$&$\frac{32}{3}$\\  \cline{3-4}
&5&$\frac{4257463829989959473}{367215613941141638310663592914134282922173229170688000000000000}$&$\frac{128}{3}$\\ \cline{3-4}
\hline \hline
&2&$\frac{1}{27303661403504640000}$&$\frac{5}{24}$\\ \cline{3-4}
$A_4$&3&$\frac{3998447009863}{18546080754338014660769482402427552723160268800000000}$&$\frac{25}{24}$\\ \cline{3-4}
&4&$\frac{63897294036759565910707701677}{45362078256453588666977436687320229387024422032399906211071677239139256762368000000000000}$&$\frac{125}{24}$\\  \cline{3-4}
\hline \hline
$A_5$&2&$\frac{46511}{1266317152743400113929736683520000000}$&-$\frac{1}{20}$\\ \cline{3-4}
\hline \hline
&2&$\frac{1}{36}$&4\\ \cline{3-4}
&3&$\frac{1}{32400}$&16\\ \cline{3-4}
&4&$\frac{1}{14288400}$&64\\  \cline{3-4}
&5&$\frac{1}{5715360000}$& 256\\ \cline{3-4}
$D_2$&6&$\frac{1}{2240649734400}$&1024\\ \cline{3-4}
&7&$\frac{477481}{417483460137744000000}$& 4096\\ \cline{3-4}
&8&$\frac{1}{340802824602240000}$&16384\\ \cline{3-4}
&9&$\frac{13082689}{1737399167709235430400000000}$&65536\\ \cline{3-4}
&10&$\frac{1924313689}{99574446563452076311839744000000}$&262144\\ \cline{3-4}
\hline \hline
&2&$\frac{23}{653837184000}$&$\frac{2}{3}$\\ \cline{3-4}
$D_3$&3&$\frac{14081}{64814699109633048576000000}$&$\frac{8}{3}$\\ \cline{3-4}
&4&$\frac{17634884778757}{11155091763154851836756404076509396992000000000}$&$\frac{32}{3}$\\ \cline{3-4}
&5&$\frac{4257463829989959473}{367215613941141638310663592914134282922173229170688000000000000}$&$\frac{128}{3}$\\ \cline{3-4}
\hline \hline
&2&$\frac{68227}{1084047447508315948449792000000}$&$\frac{1}{12}$\\ \cline{3-4}
$D_4$&3&$\frac{3727283292300079}{4163527227475565552987044342329298418787470879468908707840000000000}$&$\frac{1}{3}$\\ \cline{3-4}
&4&$\frac{1}{10^{18}}\frac{5372550944533148798111597103943896132463}{401595287497255375910389668105337327730608192879514113863554560467009086512201613320691542243934208}$&$\frac{4}{3}$\\ \cline{3-4}
\hline \hline
$D_5$&2&$\frac{24136975469}{949559334622106350631397499004290699425805762560000000000}$&$\frac{1}{120}$\\ \cline{3-4}
\hline \hline
\end{tabular}
\caption{ \small{Witten volumes with $s=0$ type $A$, $D$}}
\label{Witten table A-D}
\end{sidewaystable}

\newpage

\newpage

\begin{sidewaystable}
\vspace{14cm}\hspace{2cm}\scriptsize
\vspace{1cm}\hspace{2cm}\scriptsize

\begin{tabular}{|@{\barra}||c|c|c|c||}

\hline \hline
G&g&$vol(G,g)$&$c_{\vol}$\\

\hline\hline
&2&$\frac{1}{604800}$&16\\ \cline{3-4}
&3&$\frac{4799}{444609285120000}$&1024\\ \cline{3-4}
&4&$\frac{830287}{1058640085457339793408000}$& 65536\\  \cline{3-4}
&5&$\frac{15426208283}{26882357138658593827179724800000000}$&4194304\\ \cline{3-4}
$B_2$&6&$\frac{116091757826572267}{276425625559496603207535068049765826560000000000}$&268435456\\ \cline{3-4}
&7&$\frac{89010391857578264959599577}{289587313199907652270725517206183101789728520798208000000000000}$ &17179869184\\ \cline{3-4}
&8&$\frac{11895139004463387894854541173}{52877369206125057710964488249573327181156553092261268684800000000000000}$&1099511627776\\ \cline{3-4}
&9&$\frac{633808384316154355255792820511421021}{3849636942831172456365873314390564274746568066276449044888380984262656000000000000000}$&70368744177664\\ \cline{3-4}
&10&$\frac{37272442737128198664590284425995639982427091128187}{309322223503982769661277518134242125943776840468695068262378581664562780524641356087296000000000000000000}$&4503599627370496\\ \cline{3-4}

\hline \hline
&2&$\frac{19}{4302395130249216000}$&$\frac{-32}{3}$\\ \cline{3-4}
&3&$\frac{82197390067823}{9138251172376454624670846219476498015846400000000}$&$\frac{8192}{3}$\\ \cline{3-4}
$B_3$&4&$\frac{4942776168078403026588037}{259372284831214187236355852194766861663874462695850253729180680192000000000000}$&$\frac{-2097152}{3}$\\  \cline{3-4}
&5&$\frac{354908282467357148490713188145792722330626811}{10^{17}87848969140024613480397739898042540441070542192393712407652560102158237674544102913901274865860608}$&$\frac{536870912}{3}$\\ \cline{3-4}
\hline \hline
$B_4$&2&$\frac{4999}{1099339586242016873556223477363507200000}$&$\frac{16}{3}$\\ \cline{3-4}
&3&$\frac{290525115705512122541476428433}{29110138980741515852319493982928220072033854458512007601770585360017103402900354236416000000000000000}$&$\frac{16384}{3}$\\ \cline{3-4}
\hline \hline
$B_5$&2&$\frac{673653727654631}{1145031992084337393955364755047070523026083511420277985481064448000000000000}$&$-\frac{32}{15}$\\ \cline{3-4}
\hline \hline
&2&$\frac{1}{604800}$&16\\ \cline{3-4}
&3&$\frac{479}{444609285120000}$&1024\\ \cline{3-4}
&4&$\frac{830287}{1058640085457339793408000}$&65536\\ \cline{3-4}
&5&$\frac{15426208283}{26882357138658593827179724800000000}$& 65536\\ \cline{3-4}
$C_2$&6&$\frac{116091757826572267}{276425625559496603207535068049765826560000000000}$& 268435456\\ \cline{3-4}
&7&$\frac{89010391857578264959599577}{289587313199907652270725517206183101789728520798208000000000000}$&17179869184\\ \cline{3-4}
&8&$\frac{11895139004463387894854541173}{52877369206125057710964488249573327181156553092261268684800000000000000}$& 1099511627776\\ \cline{3-4}
&9&$\frac{633808384316154355255792820511421021}{3849636942831172456365873314390564274746568066276449044888380984262656000000000000000}$&70368744177664\\ \cline{3-4}
\hline \hline
&10&$\frac{37272442737128198664590284425995639982427091128187}{309322223503982769661277518134242125943776840468695068262378581664562780524641356087296000000000000000000}$&4503599627370496\\ \cline{3-4}
\hline \hline

&2&$\frac{19}{33612461955072000}$&$\frac{-4096}{3}$\\ \cline{3-4}
$C_3$&3&$\frac{328842292593389}{2231018352630970367351280815301879398400000000}$&$\frac{134217728}{3}$\\ \cline{3-4}
&4&$\frac{79085160601187541918557593}{1978853491449082849398466889913687604247089101378252057870336000000000000}$&$\frac{ -4398046511104}{3}$\\ \cline{3-4}
&5&$\frac{22714138337998023353982263960846269191446635633}{2094482639790168129930442330790580283190501742181628046218217852167087499488451550338298675200000000000000000}$&$\frac{144115188075855872}{3}$\\ \cline{3-4}
\hline \hline
&2&$\frac{2668859}{2236612113936395006421353103360000000}$&$\frac{4194304}{3}$\\ \cline{3-4}
$C_4$&3&$\frac{297521088216082657235423644965697}{433774873327337441627971738322499693513421035684824103143372913599269142790144000000000000000}$&$\frac{1125899906842624}{3}$\\ \cline{3-4}
\hline \hline
$C_5$&2&$\frac{5395475362754527}{1066394142885075317654167073822645957605991057511677952000000000000}$&$\frac{-274877906944/15}{15}$\\ \cline{3-4}
\hline \hline
\end{tabular}
\caption{\small{Witten volumes with $s=0$ type $B$, $C$}}
\label{Witten table B-C}
\end{sidewaystable}


\subsection{Comparison results}\label{comp}
In this section we compare some of our computations of $\vol(G,g)$ with that of Komori-Matsumoto-Tsumura
(\cite{kmt0},\cite{kmt1},\cite{kmt2}). The setting is as follows.

As before,  $G$ is a simple, compact Lie group of rank $r$.
We do not assume that $G$ is simply-connected. Let $L$ be the weight lattice of $G$.
Let $P$ be the weight lattice of the simply connected group covering $G$ and let
$Q$ be its root lattice.  Then, $Q\subset L\subset P$. Let $P^+$ be the `cone' of dominant weights, and let
$L^+=L\cap P^+$.

Let ${\bf s}=[s_\alpha]$ be  a sequence of real variables indexed by  the positive roots $R^+$.
For  $v\in \frh_\R$,  Komori-Matsumoto-Tsumura introduced
$$\zeta({\bf s},v,G)=\sum_{\gamma\in\rho+L^+} e^{2i\pi\ll v,\gamma\rr}\prod_{\alpha\in R^+}\frac{1}{\ll  \gamma,H_\alpha\rr^{s_\alpha}}.$$
If $G$ is simply connected, then $L=P$, and we may denote $\zeta({\bf s},v,G)$
by $\zeta({\bf s},v,\frg) $, or for the Lie algebra $\frg$ of type $X_r$ by $\zeta({\bf s},v,X_r) $ as in \cite{kmt2}.

\begin{example}
Consider the simply connected group $G=\SU(4)$; its positive roots are
$$[e_1-e_2, e_2-e_3, e_3-e_4, e_1-e_3, e_2-e_4, e_1-e_4].$$

The cone of dominant weights is the simplicial cone  generated by fundamental weights $\omega_1,\omega_2$ and
$\omega_3$ that are dual to simple coroots $e^1-e^2$, $e^2-e^3$ and $e^3-e^4$ respectively.
Then, if we order the exponents ${\bf s}=[s_i]$ with respect to the order of the roots as given above,

$$\zeta({\bf s},v,\SU(4))=\zeta({\bf s},v,A_3)$$
$$=\sum_{m_1=1}^{\infty} \sum_{m_2=1}^{\infty} \sum_{m_3=1}^{\infty}
\frac{e^{2i\pi \ll v,m_1\omega_1+m_2\omega_2+m_3\omega_3\rr}}
{ m_1^{s_1}
 m_2^{s_2} m_3^{s_3} (m_1+m_2)^{s_4} (m_2+m_3)^{s_5} (m_1+m_2+m_3)^{s_6}}.$$
\end{example}

\bigskip
\medskip

The series $\zeta({\bf s},v,G)$ converges when the exponents $s_{\alpha}$ are sufficiently large.
It can be shown that $\zeta({\bf s},v,G)$ can be continued as a meromorphic function of ${\bf s}$.
Let $S=\sum s_\alpha$.
Suppose $s_\alpha$ are the same for all short roots, respectively for all long roots, and
both are equal to positive even integers (that are not necessarily the same positive even integers).
Then $(2\pi)^{-S}\zeta({\bf s},0,G)$ is rational.
Indeed, using the invariance of the sum under the Weyl group $W$,
$(2\pi)^{-S}\zeta({\bf s},0,G)$ is proportional to a Bernoulli series (with repetition of coroots in $\Phi$
matching the exponent data) which is obtained by summing over
all the regular elements of the full lattice $L$.  More precisely,
\begin{equation}
\displaystyle \frac{\zeta({\bf s},0,G)}{(2\pi)^S}=|W|^{-1}{i^S}\sum_{\gamma\in L_{reg}} \frac{1}
{\prod_{\alpha\in R^+}  (2i\pi \ll \gamma,H_\alpha\rr)^{s_\alpha}},
\end{equation}
where the series on the right hand side is a multiple Bernoulli series which has (in the case that it converges absolutely)
rational value.

\medskip

If all $s_\alpha$ are equal to an even integer $2k$, we denote the sequence ${\bf s}=[s_\alpha]$ by ${\bf s}_{2k}$.
Then, for exponents ${\bf s}_{2k}$, and $G$  simply connected,
we may compute $\zeta({\bf s}_{2k},0,G)$ using the Witten volume formula for $g=k+1$,
\begin{equation}\label{zeta}
\begin{split}
\zeta({\bf s}_{2k},0,G)=|W|^{-1}(2\pi)^{2k |R^+|}(-1)^{k|R^+|} W(\Phi(G), L ,k+1)(0)
\\
=|W|^{-1}(2\pi)^{2k |R^+|} (-1)^{k |R^+|}\frac{1}{c_{\vol}} \vol(G,k+1)({0}).\\
\end{split}
\end{equation}
Above $\Phi(G)$ denotes the set of positive coroots as before.
Thus we can  use the values of the volume listed in the tables of the previous section  to compute some instances
of the series $\zeta({\bf s}_{2k},0,G)$.

\medskip
We now demonstrate some computations of $\zeta({\bf s}_{2k},0,G)$.

\subsubsection{Examples of type $A_r$}
Let $n=r+1$.  We consider the simply connected group $G=\SU(n)$.
If we write $N=|R^+|=\frac{n(n-1)}{2}$, then Equation (\ref{zeta})  is
$$\zeta({\bf s}_{2k},0,A_r)=(-1)^{kN}(2\pi)^{2kN}\frac{1}{n!}\frac{1}{c_{\vol}}\vol(\SU(n),k+1)(0),$$
where
$c_{\vol}=n^{k+1}(-1)^{k\frac{n(n-1)}{2}}\frac{1}{n!}$.

Thus we can recover, the values of $\zeta({\bf s}_{2k},0,A_r)$ for $n=3,4,5,6$ using Table \ref{Witten table A-D}.

For instance, if $n=3$ (that is $r=2$), and $k=1$, then  we have $N=3$, $\vol(\SU(3),2)(0)=\frac{1}{20160}$ and
$c_{\vol}=-3/2$,
and we obtain
$$\zeta({\bf s}_{2},0,A_2)=(2\pi)^{6}(-1)^3\frac{1}{3!}\frac{1}{9(-1)^3\frac{1}{3!}}\frac{1}{20160}
=\pi^{6}\frac{1}{2835}$$ as in \cite{kmt3} equation 7.11.

\medskip

We give one other example whose parameters are not contained in the tables.
Consider $n=4$, $k=5$. Then, $N=6$ and
{\tiny{$$\zeta({\bf s}_{2k},0,A_3)=(2\pi)^{60}\frac{1393614066290742513412310095846}{58203152419058513584890890509712229288124323632762771449711578369140625}$$}}

\subsubsection{Examples of type  $B_r$, $C_r$ and $D_r$}

For root systems of type $B_r$ and $C_r$,  the number of positive
roots is $N=r^2$ and the order of the Weyl group is $|W|=r!2^r$. For
example, for $B_r$ when all exponents $s_\alpha=2k$,
$$\zeta({\bf s}_{2k},0,B_r)= \frac{1}{r!2^r}(2\pi)^{2kN}(-1)^{kN}\mathcal B(\CH_r^{BC},\check{Q}_B,g^B_{{\bf s}_{2k}})(0)$$

\medskip

Explicitly for $C_2$, fundamental weights are $e_1$ and $e_1+e_2$,
and positive roots are $[e_1-e_2, 2e_2, 2 e_1, e_1+e_2]$.  We
consider the multiple zeta series
$$\zeta([s_1,s_2,s_3,s_4],0,C_2)=\sum_{m=1}^\infty\sum_{n=1}^\infty \frac{1} {m^{s_1}n^{s_2}(m+n)^{s_3}(m+2n)^{s_4}},$$
where we order the exponents with respect to the order given in the list of roots above.
In the particular case that all $s_i=2$,  using
$|W|=8$ and values in Table \ref{Witten table B-C}, we find that for ${\bf {s_2}}=[2,2,2,2]$
$$\zeta({\bf {s_2}},0,C_2)=\frac{1}{8}(-1)^4(2\pi)^{8}\frac{1}{16}\frac{1}{604800}=\frac {1}{302400}\,{\pi }^{8},$$
which is the equation (7.23) of \cite{kmt3}.


\medskip

We also give an example of $D_4$ with all exponents equal to $6$ (that is
$k=3$ and ${\bf {s_6}}=[6,6,6,6,6,6,6,6,6,6,6,6]$ ).
$$\zeta({\bf s_6},0,D_4)=$${\tiny{$$\frac{5372550944533148798111597103943896132463}{21770524158223250767856810653451043131130341521323218291199402843808716814637088000000000000000000}\pi^{72}$$}}

\bigskip
It is also possible to compute $\zeta({\bf s},0,G)$ when the exponents in the list ${\bf s}=[s_1,s_2,s_3,s_4]$ are
different positive even integers for short and long roots.  We conclude with one example of this kind.

\medskip

Consider the list of exponents $[2,4,4,2]$ corresponding to the list of positive roots $[e_1-e_2, 2 e_2, 2 e_1, e_1+e_2]$
of $C_2$.
Then,
$$\zeta([2,4,4,2],0, C_2)=\sum_{m=1}^\infty\sum_{n=1}^\infty \frac{1} {m^2n^4(m+n)^4(m+2n)^2}=\pi^{12}\frac{53}{6810804000},$$
which coincides with equation (4.30) of \cite{kmt3}.



\medskip

\subsection{Some multiple zeta values}
Let $k$ be a positive integer. Consider the multiple zeta series
{\small $$\zeta_r(2k,2k,\ldots,2k):=
\sum_{m_1=1}^{\infty} \sum_{m_2=1}^{\infty}\cdots \sum_{m_r=1}^{\infty} \frac{1}{m_1^{2k}}
\frac{1}{(m_1+m_2)^{2k}}\cdots
\frac{1}{(m_1+m_2+\cdots+m_r)^{2k}}.$$}

Following \cite{kmt0}, we want to demonstrate how the above series can be computed
using the Bernoulli series ${\mathcal B}(\mathcal H_r^{BC},\check{Q}_C,g_{\bf s}^C)(0)$ for the root system of type
$C_r$, where the exponents
${\bf s}=[s_\alpha]$ are taken to be $0$ for long positive roots, and $2k$ for short
positive roots.
Using the invariance of the sum under the Weyl group, which is of order $2^r r!$ for $C_r$, we may write
$$\mathcal B(\CH_r^{BC},\check{Q}_C, g_{\bf s}^C)(0)=2^r r! \sum_{\gamma\in (P_C^+)_{reg}} \frac{1}{\prod_{\alpha>0} (2i\pi \ll H_\alpha,\gamma \rr)^{s_\alpha}}.$$

A dominant integral regular weight $\gamma \in {(P_C^+)}_{reg}$ is of the form $\gamma=\sum_{i=1}^r m_i \omega_i$
with $m_i\geq 1$ (as before $\omega_i$ denotes the fundamental weights).  Also recall that the root system of type $C_r$
admits $r$ long roots $\{{2e_i}\}_{1\leq i\leq r}$, with corresponding (short) coroots  $\{H_{2e_i}=e^i\}_{1\leq i\leq r}$.
If we express $H_{2e_i}=e^i=(e^i-e^{i+1})+ (e^{i+1}-e^{i+2})+\cdots+ e^r$, then
$\langle H_{2e_i},\gamma\rangle=m_i+m_{i+1}+\cdots +m_r.$
Thus,
$$\zeta_r(2k,2k,\ldots,2k)=(-1)^{kr}(2\pi)^{2k r}\frac{1}{2^r r!} \mathcal B(\CH_r^{BC},\check{Q}_C, g_{\bf s}^C)(0).$$

For example, $\zeta_2(4,4)=\frac{\pi^8}{113400}$, $\zeta_5(4,4,4,4,4)=\frac{\pi^{20}}{548828480360160000}$,
$\zeta_5(6,6,6,6,6)=\frac{\pi^{30}}{1347828286825972065254765625}$.

\bigskip
\bigskip

\section{Appendix: Szenes formula}\label{szf}

Let $\CH$ be an arrangement of hyperplanes compatible with a lattice $\Lambda$.
Let $g\in \CR_{\CH}$.  Consider
$$\CB(\CH,\Lambda,g)(v)=\sum_{\gamma\in \Gamma_{reg}(\CH)}g(2i\pi\gamma) e^{2i\pi \la v,\gamma\ra}.$$

This function (a generalized function on $V$)  coincide with a polynomial function
$\CB(\CH,\Lambda,g,\tau)$ on a tope $\tau$ (see Proposition \ref{prop:polyn}).
The piecewise polynomial function $P(\CH,\Lambda,g)$ has been defined in  Definition \ref{def:sz}.
Following Szenes \cite{sze1}, we prove the following formula.

\begin{theorem}(Szenes)\label{theo:mainformula}
Let $g\in \CR_{\CH}$.
On $V_{reg}(\CH,\Lambda)$ we have the equality
$$\CB(\CH,\Lambda,g)= P(\CH,\Lambda,g).$$
\end{theorem}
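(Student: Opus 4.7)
The plan is to compare both sides as $\Lambda$-periodic generalized functions on $V$ by matching their Fourier expansions, and then to show that the difference is supported on $V \setminus V_{\reg}$. For the right-hand side I would write the trace defining $P$ against a dual basis $\{f_i, \phi_i\}$ of $\CS_{\CH}$ and substitute the Fourier-type identity
\begin{equation*}
e^{\langle z, v\rangle}\AV^{\Lambda}(f)(z,v) = \sum_{\gamma \in \Gamma} f(2i\pi\gamma - z)\, e^{2i\pi\langle v, \gamma\rangle},
\end{equation*}
which is essentially the defining expression for $\AV^{\Lambda}$. Exchanging the summations and using linearity of ${\bf R}$ and of each $\phi_i$ yields
\begin{equation*}
P(\CH, \Lambda, g)(v) = \sum_{\gamma \in \Gamma} c_\gamma\, e^{2i\pi\langle v, \gamma\rangle},\qquad c_\gamma = \sum_i \phi_i\bigl({\bf R}(g(z)\, f_i(2i\pi\gamma - z))\bigr),
\end{equation*}
to be compared with the Fourier series of $\CB(\CH, \Lambda, g)$, whose coefficient at $\gamma \in \Gamma_{\reg}(\CH)$ is $g(2i\pi\gamma)$ and at singular $\gamma$ is $0$.

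The key step will be to prove $c_\gamma = g(2i\pi\gamma)$ for $\gamma \in \Gamma_{\reg}(\CH)$. Since $2i\pi\gamma$ is a regular point, $z \mapsto f_i(2i\pi\gamma - z)$ is holomorphic near the origin, so $g(z) f_i(2i\pi\gamma - z)$ lies in $\CM_{\CH}$ and the expression $\phi_i({\bf R}(\cdot))$ makes sense. I would establish the identity by reducing, via linearity and Lemma \ref{ind}, to the case $g = f_\sigma$ for a basis $\sigma \in \mathfrak{B}(\Phi^{eq})$, where it becomes a multi-variable generalization of the Cauchy reproducing formula $g(w) = \Res_{z=0}(g(z)/(w-z))$. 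The verification amounts to an explicit iterated-residue calculation: using a diagonal dual basis such as an Orlik-Solomon or flag basis from Section \ref{section:clroot}, each $\phi_i$ is realized as an iterated residue $\Res^{\overrightarrow{\sigma_i}}$, and the pairing $\Res^{\overrightarrow{\sigma_i}}(f_\sigma(z)\, f_{\sigma_i}(2i\pi\gamma - z))$ can be evaluated by expanding $f_{\sigma_i}(2i\pi\gamma - z)$ as a geometric series in $z$ coordinate by coordinate and reading off the appropriate coefficient.

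For $\gamma \in \Gamma \setminus \Gamma_{\reg}(\CH)$ the coefficients $c_\gamma$ may not vanish, but any discrepancy between $P$ and $\CB$ contributes only to a distribution supported on $V \setminus V_{\reg}$: the singular $\gamma$'s lie on the union of rational sublattices $\phi^\perp \cap \Gamma$ indexed by $\phi \in \Phi^{eq}$, and by Poisson duality, $\Lambda$-periodic distributions whose Fourier support lies in such a sublattice are supported on the corresponding union of affine walls in $V$. This same mechanism absorbs the failure of the reproducing identity for polynomial parts of $g$ (where $c_\gamma$ may differ from $g(2i\pi\gamma)$ even at regular $\gamma$, as already seen in the one-variable case $g = 1$, compare Example \ref{ex:dim1Eis}). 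The hard part will be the explicit verification of the reproducing identity at regular $\gamma$ for simple fractions, requiring a careful multi-dimensional residue computation; a secondary technicality is rigorously justifying the interchange of summation, projection ${\bf R}$, and pairing with $\phi_i$, most cleanly done in the distributional framework on $V/\Lambda$ where all the series involved converge unconditionally.
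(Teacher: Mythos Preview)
Your approach---direct comparison of Fourier coefficients---is genuinely different from the paper's. The paper argues by induction on $|\CH|$ via deletion--contraction: choosing $\phi \in \Phi^{eq}$ not among the poles of $g$, one uses the exact sequence $0 \to \CS_{\CH'} \to \CS_{\CH} \xrightarrow{\res_\phi} \CS_{\CH_0} \to 0$ together with the compatibility $\res_\phi \SZ^\Lambda(v) = -\SZ^{\Lambda_0}(v_0)\res_\phi$ to obtain the trace recurrence $P(\CH) = P(\CH') - p^*P(\CH_0)$; the matching recurrence for $\CB$ is immediate from the partition $\Gamma_{\reg}(\CH') = \Gamma_{\reg}(\CH) \sqcup \{\gamma \in \Gamma_{\reg}(\CH') : \langle\phi,\gamma\rangle = 0\}$. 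This sidesteps any reproducing identity entirely.

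Your plan has a real gap in its last paragraph. The Poisson-duality argument (Fourier support in $\phi^\perp \cap \Gamma$ $\Rightarrow$ support on affine walls) applies only to the singular-$\gamma$ contribution; it says nothing about the failure of the reproducing identity at \emph{regular} $\gamma$. Take $V = \R^2$, $\CH = \{z_1 = 0, z_2 = 0\}$, $g(z) = 1/z_1$: here $\dim\CS_\CH = 1$ and one computes $c_\gamma = 0$ for every regular $\gamma$, so the regular-$\gamma$ part of $\CB - P$ has Fourier support equal to all of $\Gamma_{\reg}$ and is \emph{not} wall-supported on its own. What actually happens is that this piece and the singular-$\gamma$ piece combine to give $(1/2 - \{v_1\})\delta_\Z(v_2)$, but establishing such cancellation in general is exactly the content of the theorem, not a consequence of ``the same mechanism''. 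A related issue: Lemma~\ref{ind} reduces $g$ to fractions $\theta(\nu,\mathbf{n})$ with $\nu$ linearly independent but not necessarily spanning $V$, so your reduction to simple fractions $f_\sigma$ does not cover all of $\CR_\CH$; the non-spanning pieces are precisely where the reproducing identity fails and your argument breaks down.
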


We recall that, for $f\in S_{\mathcal H}$,
$$\SZ^{\Lambda}(v)(f)(z)=\sum_{\gamma\in \Gamma}  f(2i\pi\gamma-z) e^{\langle v,2i\pi\gamma-z\rangle },$$
and $P(\CH,\Lambda,g)(v)$ is the trace on $S_{\CH}$ of the operator  $A(v,g):\CS_\CH\to \CS_\CH$ defined by
\begin{equation}\label{eq:AVg}
f(z)\mapsto {\bf R}(e^{\ll z,v\rr}g(z) (\SZ^{\Lambda}(v)f)(z)).
\end{equation}
Here ${\bf R}: R_{\mathcal H}\to S_{\mathcal H}$ is the total residue.

We first consider the one dimensional case where $V=\R$, and $\Lambda=\Z$.
Here $\CH=\{0\}$, with equation $z=0$.
The topes are the intervals $]-n,n+1[$, and the space $\CS_\CH$ is one dimensional with basis $f_\sigma=\frac{1}{z}$.

Let $\tau=]0,1[$.
Assume $v\in \tau$ so that $[v]=0$.  If we consider $g(z)=\frac{1}{ z^k}$, the formula to be proven is
\begin{equation}\label{okone}
\sum_{n\neq 0}\frac{e^{2i\pi n v}}{(2i\pi n)^k}=\Res_{z=0} (\frac{1}{z^k} e^{zv}(\SZ^{\Lambda}(\tau)f_\sigma)(z)).
\end{equation}
As $\SZ^{\Lambda}(\tau)(f_\sigma)(z)=\frac{1}{1-e^z}$  (see Example \ref{ex:dim1Eis}),
we have  thus to verify that
$$\sum_{n\neq 0}\frac{e^{2i\pi n v}}{(2i\pi n)^k}=\Res_{z=0}( \frac{1}{z^k} e^{zv}\frac{1}{1-e^z}).$$

The poles of the function $\frac{1}{1-e^z}$ consist of the elements $2i\pi n$, with $n\in \Z$, and , when $k\geq 0$,
the equality above follows from the residue theorem in one variable. If $k<0$, both sides vanish
(the left hand side gives a generalized function  supported on $\Z$, the right hand side has no poles).

Szenes formula generalizes this result in higher dimensions, which we aim to demonstrate below.

\medskip

\begin{proof}

We first remark that using both the comparison formulae (\ref{lem compareP }) and (\ref{lem:secondcompare}) over
commensurable lattices, it suffices to prove the equality for any lattice $\Lambda$ (compatible with $\CH$) of our choice.

We will prove Theorem  \ref{theo:mainformula} by the standard `deletion-contraction' argument on arrangement of
hyperplanes.

Choose a set $\Phi^{eq}$ of equations for $\CH$. For $\phi\in \Phi^{eq}$,  we consider the following two arrangements:

$\bullet$  $\mathcal H'=\mathcal H\setminus H_\phi$.

$\bullet$ $\mathcal H_0=\{H\cap H_\phi, H\in \mathcal H'\}$, the trace of the arrangement $\mathcal H'$ on $H_\phi$.

Consider the vector space $V_0:=V/\R \phi$, let $p:V\to V_0$ be the projection.
The dual space $U_0$ of the vector space  $V_0$ is the hyperplane $H_\phi$.

We now compare the spaces $\CS_{\CH}$,
$\CS_{\CH_0}$ and $\CS_{\CH'}$.
\begin{definition}
We say that a function $f\in \CM_{\mathcal H}$ has at most  a simple pole along the hyperplane $\phi=0$ if
$\phi f \in \CM_{\mathcal H'}$.
In this case,  we define $res_\phi f\in \CM_{\mathcal H_0}$ by
$res_\phi f= (\phi f)|_{H_\phi}$
\end{definition}

In other words, the meromorphic function $f$ has at most a simple pole on $H_\phi$ if the denominator of $f$ contains
the factor $\phi$ at most once. Then we multiply $f$ by $\phi$, eliminating $\phi$ from the denominator of $f$, and we
can restrict $\phi f$ to $\phi=0$. This operation kills the functions $f$ having no poles of $\phi=0$.

If $f=\frac{1}{\phi}f'$ with $f'\in \CM_{\CH'}$, then
\begin{equation}\label{eq:resphi}
res_\phi {\rm {\bf R}} f={\rm {\bf R}} res_\phi f.
\end{equation}

This is easy to verify using for example a decomposition of $f'$ with denominator on a set of independent hyperplanes
(see Lemma \ref{ind}).

The map $res_\phi$ is well defined on $\CS_{\mathcal H}$, as elements in $\CS_{\mathcal H}$ have at most a simple pole on $\phi=0$.
It is easy  to prove that
we have the exact sequence
\begin{equation}\label{eq:exactseq}
\begin{CD} 0 @> >>\CS_{\mathcal H'}@>i>> \CS_{\mathcal H} @>res_\phi>> \CS_{\mathcal H_0}@>  >> 0 .\end{CD}
\end{equation}

\medskip

Let $v\in V_{reg}(\Lambda,\CH)$. Its projection $v_0=p(v)$  belongs to $V_{reg}(\Lambda_0,\CH_0)$.

\begin{lemma}\label{lem:resSZ}
Let $v\in V_{reg}(\Lambda,\CH)$ and  $f\in \CS_{\mathcal H}$. Then
$$res_\phi \SZ^{\Lambda}(v)(f)= -\SZ^{\Lambda_0}(v_0)(res_\phi f),$$
with $v_0=p(v)$.
\end{lemma}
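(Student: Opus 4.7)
The plan is to compute $\phi(z)\,\SZ^{\Lambda}(v)(f)(z)$ explicitly from the definition and then restrict to $H_\phi=U_0$. It suffices to treat $f\in\CS_{\CH}$ that has an actual simple pole along $\phi=0$ (otherwise $\mathrm{res}_\phi f=0$ and $\phi\cdot\SZ^{\Lambda}(v)(f)$ vanishes on $H_\phi$ by a continuity argument on each tope). So write $f=f'/\phi$ with $f':=\phi f$ regular on $H_\phi$, and note that $\mathrm{res}_\phi f=f'\!\mid_{U_0}$.

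The key algebraic input is the one-line identity
\begin{equation*}
\phi(z)+\phi(2i\pi\gamma-z)=2i\pi\la\phi,\gamma\ra,
\end{equation*}
which I would use to rewrite
\begin{equation*}
\phi(z)\,f(2i\pi\gamma-z)=\frac{2i\pi\la\phi,\gamma\ra\,f'(2i\pi\gamma-z)}{\phi(2i\pi\gamma-z)}\;-\;f'(2i\pi\gamma-z).
\end{equation*}
Inserting this into $\phi(z)\SZ^{\Lambda}(v)(f)(z)=\sum_{\gamma\in\Gamma}\phi(z)f(2i\pi\gamma-z)e^{\la v,2i\pi\gamma-z\ra}$ splits the series into two pieces. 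Restrict $z$ to $U_0$, i.e.\ let $\phi(z)\to 0$. For $\gamma$ with $\la\phi,\gamma\ra\neq 0$ one has $\phi(2i\pi\gamma-z)=2i\pi\la\phi,\gamma\ra$ on $U_0$, so the first piece equals $f'(2i\pi\gamma-z)$ and cancels the $-f'$ term. For $\gamma$ with $\la\phi,\gamma\ra=0$ the first piece is identically zero (its numerator is zero as a meromorphic function of $z$), leaving only the contribution $-f'(2i\pi\gamma-z)\,e^{\la v,2i\pi\gamma-z\ra}$.

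Thus on $U_0$ one obtains
\begin{equation*}
\mathrm{res}_\phi\SZ^{\Lambda}(v)(f)(z)=-\sum_{\gamma\in\Gamma_0} f'(2i\pi\gamma-z)\,e^{\la v,2i\pi\gamma-z\ra},
\end{equation*}
where $\Gamma_0:=\Gamma\cap H_\phi$ is precisely the dual lattice of $\Lambda_0=p(\Lambda)\subset V_0$. Because $2i\pi\gamma-z\in U_0$, we have $f'(2i\pi\gamma-z)=(\mathrm{res}_\phi f)(2i\pi\gamma-z)$ and $\la v,2i\pi\gamma-z\ra=\la v_0,2i\pi\gamma-z\ra$, so the right-hand side is exactly $-\SZ^{\Lambda_0}(v_0)(\mathrm{res}_\phi f)(z)$, as claimed. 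The one point that deserves care is that the series for $\SZ^\Lambda(v)(f)$ converges only as an $L^2$ (Fourier) series in $v$, so the termwise manipulation must be read as an equality of Fourier expansions on each tope; but since $v$ is regular in $V$ its image $v_0$ is regular for $(\CH_0,\Lambda_0)$, and the bijection between coefficients supported on $\Gamma_0$ is direct, so this causes no real difficulty.
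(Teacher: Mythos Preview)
Your proof is correct and follows essentially the same route as the paper's: both arguments reduce to the observation that only the terms with $\la\phi,\gamma\ra=0$ survive after multiplying by $\phi(z)$ and restricting to $H_\phi$, and for those terms $\phi(z)=-\phi(2i\pi\gamma-z)$ supplies the minus sign. The paper's version is slightly more direct---it simply notes that for $\la\phi,\gamma\ra\neq 0$ the function $f(2i\pi\gamma-z)$ has no pole along $\phi(z)=0$, so those terms are killed by $\mathrm{res}_\phi$ without any algebraic splitting---whereas your identity $\phi(z)+\phi(2i\pi\gamma-z)=2i\pi\la\phi,\gamma\ra$ makes the same cancellation explicit termwise.
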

\begin{proof}
We have
$$\SZ^{\Lambda}(v)(f)(z)=\sum_{\gamma\in \Gamma}  f(2i\pi\gamma-z) e^{\langle v,2i\pi\gamma-z\rangle }.$$
If $\gamma$ is  such that $\ll \phi,\gamma\rr\neq 0$, then the term $f(2i\pi\gamma-z)$ has no pole on $\phi=0$. Thus we obtain, for $z\in H_\phi$,
\begin{eqnarray*}
res_\phi \SZ^{\Lambda}(v)(f)(z)&=&\sum_{\gamma\in \Gamma, \ll \gamma,\phi\rr =0} (\phi(z) f(2i\pi\gamma-z))|_{H_\phi} e^{ \langle v_0,2i\pi\gamma-z\rangle }\\
&=&-\sum_{\gamma\in \Gamma, \ll \gamma,\phi\rr=0}  \phi(2\pi \gamma-z) f(2i\pi\gamma-z)|_{H_\phi} e^{\langle v_0,2i\pi \gamma-z\rangle }.
\end{eqnarray*}
\end{proof}

Let $g\in \CR_{\CH'}$, and let $g_0$ be its restriction to $H_\phi$.
Then the  operator $A(v,g)$ leaves $\CS_{\CH'}$ stable.

If $F$ has at most a simple pole on $\phi=0$, then $gF$ also has at most a simple pole
on $\phi=0$, as $g$ has no pole on $\phi=0$. Thus the maps in the diagram below are well defined. Its commutativity follows from
Lemma \ref{lem:resSZ}.

\begin{lemma}
Let $g\in \CR_{\CH'}$.
Then the following diagram is commutative.
\begin{equation}\label{eq:cd}\begin{CD}
      0 @>>>  \CS_{\CH'} @>>> \CS_{\CH}  @>>> \CS_{\CH_0} @>>>  0 \\
      @. @VVA(v,g)V  @VVA(v,g)V  @VV-A(v_0,g_0)V @. \\
      0 @>>>  \CS_{\CH'} @>>>  \CS_{\CH} @>>> \CS_{\CH_0} @>>>  0
\end{CD}\end{equation}
\end{lemma}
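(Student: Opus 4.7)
The plan is to derive the right-hand square first, as it captures the main computation, and then deduce the left-hand square from it by a simple exactness argument.

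For the right square, fix $f\in\CS_{\CH}$ and compute
$\res_\phi A(v,g)(f) = \res_\phi \mathbf{R}\bigl(e^{\ll z,v\rr} g(z)\,\SZ^{\Lambda}(v)(f)(z)\bigr)$.
First I would invoke (\ref{eq:resphi}) to pull $\res_\phi$ through the total residue $\mathbf{R}$, reducing to $\mathbf{R}\bigl(\res_\phi\bigl(e^{\ll z,v\rr} g(z)\,\SZ^{\Lambda}(v)(f)(z)\bigr)\bigr)$. Next, since $g\in \CR_{\CH'}$ and $e^{\ll z,v\rr}$ are both holomorphic along $\phi=0$, and since $\SZ^{\Lambda}(v)f$ has at most a simple pole there (only the summands with $\ll\phi,\gamma\rr=0$ contribute to that pole, and each is a shift of $f\in\CS_{\CH}$), the basic identity $\res_\phi(hF)=h|_{H_\phi}\cdot\res_\phi F$ for $h$ holomorphic along $H_\phi$ lets me factor out $e^{\ll z,v\rr}|_{H_\phi} g_0(z)$. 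Now I would apply Lemma \ref{lem:resSZ} to rewrite $\res_\phi \SZ^{\Lambda}(v)(f) = -\SZ^{\Lambda_0}(v_0)(\res_\phi f)$, and observe that for $z_0\in H_\phi$ the pairing satisfies $\ll z_0, v\rr = \ll z_0, v_0\rr$ because $v_0=p(v)$ in $V/\R\phi$. Combining everything:
$$\res_\phi A(v,g)(f) = -\mathbf{R}\bigl(e^{\ll z_0, v_0\rr} g_0(z_0)\,\SZ^{\Lambda_0}(v_0)(\res_\phi f)(z_0)\bigr) = -A(v_0,g_0)(\res_\phi f),$$
which is the commutativity of the right square.

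For the left square, I would use exactness of (\ref{eq:exactseq}). If $f$ comes from $\CS_{\CH'}$, i.e.\ lies in $\ker(\res_\phi)$, then by the right square just proved, $\res_\phi A(v,g)(f) = -A(v_0,g_0)(0) = 0$. Hence $A(v,g)(f)\in \ker(\res_\phi)=\CS_{\CH'}$, so $A(v,g)$ restricts to an operator on $\CS_{\CH'}$ and commutes tautologically with the inclusion $i$.

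The main obstacle I expect is only bookkeeping: checking that $\SZ^{\Lambda}(v)f$ really has at most a simple pole along $\phi=0$ (so that $\res_\phi$ is defined on it), and verifying the factorization rule $\res_\phi(hF)=h|_{H_\phi}\cdot\res_\phi F$ directly from the definition $\res_\phi F=(\phi F)|_{H_\phi}$. Both are short; the substantive input is Lemma \ref{lem:resSZ}, which already carries the essential sign change that produces the ``$-$'' in front of $A(v_0,g_0)$.
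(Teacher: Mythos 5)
Your proof is correct and follows essentially the same route as the paper: the substantive input is Lemma \ref{lem:resSZ} combined with the identity (\ref{eq:resphi}) and the simple-pole bookkeeping, which is exactly what the paper means when it says the commutativity ``follows from Lemma \ref{lem:resSZ}''. The only (harmless, arguably cleaner) variation is that you deduce stability of $\CS_{\CH'}$ under $A(v,g)$ --- the left square --- from the right square together with exactness of (\ref{eq:exactseq}), whereas the paper asserts this stability directly from the fact that $g\in\CR_{\CH'}$, so that a function with poles only along $\CH'$ keeps its total residue inside $\CS_{\CH'}$.
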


We are now ready to prove Theorem \ref{theo:mainformula} by induction on the number of hyperplanes in $\CH$.
If there are less than $r$ hyperplanes, then $\CS_\CH=\{0\}$, the generalized function $\CB(\CH,\Lambda,g)$ is
supported on affine walls, so both sides of the equation of Theorem \ref{theo:mainformula} vanish.

Assume that $\CH$ consists of $r$ independent hyperplanes intersecting on $\{0\}$.
Changing the lattice $\Lambda$, we can eventually assume that $\Lambda$ is the lattice generated by the equations
$\phi_k$ of the hyperplanes.  Then, the theorem follows from Formula (\ref{okone}) in the
one dimensional case.

Assume that $\CH$ have more than  $r$ hyperplanes.
Then by the Lemma \ref{ind}, we can write a function in $\CR_{\CH}$  as a sum of functions $g$ whose poles lie on an
independent subset of hyperplanes of $\CH$, thus in number less or equal to $r$.
Thus $\CR_\CH$ is linearly generated by functions $g$ such that  some equation $\phi\in \Phi^{eq}$  is not  a pole of $g$.
We consider such a couple $(g,\phi)$ and the arrangements $\CH'$ and $\CH_0$ associated to $\phi$ by deletion and
contraction.  The function $g$ is in $\CR_{\CH'}$.

Let $g_0 \in \CR_{\mathcal H_0}$ be the restriction of $g$ to $H_\phi$.  Thus $\mathcal{B}(\mathcal H_0,\Lambda_0,g_0)$ is
a generalized function on $H_\phi^*=V/\R \phi$ and $p^*\mathcal{B}(\mathcal H_0,\Lambda_0,g_0)$ is a function on $V$
(constant in the direction $\phi$).

We have the following recurrence relation for the function (eventually generalized)
$\mathcal{B}(\mathcal H,\Lambda,g)$ associated to an element $g\in
R_{\mathcal H'}$.

\begin{proposition} \label{prop:recug}
If $g\in \CR_{\mathcal H'}$, then
$$\mathcal{B}(\mathcal H,\Lambda,g)=\mathcal{B}(\mathcal H',\Lambda,g)-p^*
\mathcal{B}(\mathcal H_0,\Lambda_0,g_0).$$
\end{proposition}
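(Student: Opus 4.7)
The plan is to prove the identity by directly splitting the defining sum of $\CB(\CH',\Lambda,g)$ according to whether or not $\la\phi,\gamma\ra$ vanishes. Since $\CH = \CH' \cup \{H_\phi\}$, a lattice point $\gamma\in\Gamma$ is regular for $\CH'$ but not for $\CH$ precisely when $\la\phi,\gamma\ra = 0$ and $\gamma$ avoids all other hyperplanes; that is, precisely when $\gamma$ lies in the sublattice $\Gamma_0 := \Gamma\cap H_\phi$ and is regular with respect to the restricted arrangement $\CH_0$. This gives the disjoint decomposition
\begin{equation*}
\Gamma_{\reg}(\CH') \;=\; \Gamma_{\reg}(\CH) \;\sqcup\; \{\gamma\in \Gamma_0 : \gamma \text{ is regular for } \CH_0\}.
\end{equation*}

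First I would note that, after possibly rescaling $\phi$ within its line, we may assume $\phi$ is primitive in $\Lambda$, so that $\Gamma_0 = \Gamma\cap H_\phi$ is the dual lattice in $U_0 = H_\phi$ to $\Lambda_0 := p(\Lambda) \subset V/\R\phi = V_0$. For any $\gamma\in\Gamma_0$ and any $v\in V$, the pairing $\la v,\gamma\ra$ descends to the pairing $\la v_0,\gamma\ra$ between $V_0$ and $U_0$, where $v_0 = p(v)$.

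Next I would exploit the crucial hypothesis $g\in\CR_{\CH'}$: since $\phi$ is not a pole of $g$, the function $g$ is regular on $H_\phi$, and its restriction to $H_\phi$ is precisely $g_0 \in \CR_{\CH_0}$. Hence for $\gamma\in\Gamma_0$ regular for $\CH_0$, we have $g(2i\pi\gamma) = g_0(2i\pi\gamma)$. Substituting into the series (interpreted as a generalized function on $V$, as in Proposition \ref{prop:polyn}) yields
\begin{equation*}
\CB(\CH',\Lambda,g)(v) \;=\; \sum_{\gamma\in \Gamma_{\reg}(\CH)} g(2i\pi\gamma)\, e^{2i\pi\la v,\gamma\ra} \;+\; \sum_{\substack{\gamma\in\Gamma_0\\ \gamma\in\Gamma_{\reg}(\CH_0)}} g_0(2i\pi\gamma)\, e^{2i\pi\la v_0,\gamma\ra},
\end{equation*}
which identifies the first sum with $\CB(\CH,\Lambda,g)(v)$ and the second with $(p^*\CB(\CH_0,\Lambda_0,g_0))(v)$. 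Rearranging gives the claimed identity.

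The only delicate point is that these series converge only in the sense of generalized functions, so one must justify that the splitting of the index set is legitimate at the level of distributions. This is straightforward: both sides can be tested against a smooth test function on $V$ which is either supported away from the $\phi$-invariant direction (reducing to absolutely convergent sums over finite pieces of $\Gamma$) or pulled back from $V_0$, so the pieces can be rearranged termwise. This is the only step that requires any care; the rest is bookkeeping with lattices and the hypothesis that $g$ has no $\phi$-pole.
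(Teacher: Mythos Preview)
Your argument is correct and follows exactly the same route as the paper: the paper's proof is a two-line remark that $\Gamma_{\reg}(\CH')$ exceeds $\Gamma_{\reg}(\CH)$ precisely by those $\gamma$ with $\la\gamma,\phi\ra=0$, and that this extra summation yields $p^*\CB(\CH_0,\Lambda_0,g_0)$. You have simply unpacked the lattice duality $(\Lambda_0,\Gamma_0)$ and the distributional bookkeeping more carefully than the paper bothers to, but the idea is identical.
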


This is  clear. Indeed the set $\Gamma_{reg}(\CH')$ is larger than $\Gamma_{reg}(\CH)$ as it may contain also elements
$\gamma$ with $\ll\gamma,\phi\rr=0$.
This additional summation gives rise to the term
$\mathcal{B}(\mathcal H_0,\Lambda_0,g_0).$

\bigskip

Let  $v\in V_{reg}(\CH,\Lambda)$.
As $P(\CH,\Lambda,g)(v)$ is the trace of the operator $A(v,g)$ defined in (\ref{eq:AVg}), the commutativity of the
diagram (\ref{eq:cd}) above implies that
$$P(\mathcal H,\Lambda,g)(v)=P(\mathcal H',\Lambda,g)(v)-
P(\mathcal H_0,\Lambda_0,g_0)(v_0).$$
Comparing with Proposition \ref{prop:recug}, we see by induction that Szenes formula holds.
\end{proof}
\bigskip

\end{document}